\numberwithin{equation}{section}
\theoremstyle{plain}
\newtheorem{theorem}{Theorem}[section]
\newtheorem{proposition}[theorem]{Proposition}
\newtheorem{corollary}[theorem]{Corollary}
\newtheorem{lemma}[theorem]{Lemma}
\newtheorem{definition}[theorem]{Definition}
\newtheorem{remark}[theorem]{Remark}
\newtheorem{claim}[theorem]{Claim}
\title{A multi-parameter cinematic curvature}
\author{Mingfeng Chen, Shaoming Guo, Tongou Yang}
\date{}
\begin{document}
\maketitle

\begin{abstract}
    We generalize Sogge's cinematic curvature condition to multi-parameter cases, and prove that the associated maximal operator is bounded on $L^p(\R^2)$ for some $p<\infty$. In particular, we prove a local smoothing conjecture of Zahl \cite{Zah23}. 
\end{abstract}

\setcounter{tocdepth}{1}
\tableofcontents

\section{Introduction}

Let $\varphi: \R\to \R$ be a compactly supported smooth bump function. Consider the maximal operator 
\begin{equation}\label{230209e1_1}
    \sup_{|u|\le 1} \anorm{
    \int_{\R} f(x-\theta, y-u\theta)\varphi(\theta)d\theta
    }.
\end{equation}
It is well known that this maximal operator is unbounded on $L^p(\R^2)$ for every $p<\infty$, due to the existence of Nikodym sets. If we introduce curvatures to \eqref{230209e1_1}, by changing the straight lines $(\theta, u\theta)$ to $(\theta, u\theta^2)$ for instance, then Bourgain \cite{Bou86} (see also Marletta and Ricci \cite{MR98}) proved that \begin{equation}\label{230209e1_2}
    \sup_{|u|\le 1} \anorm{
    \int_{\R} f(x-\theta, y-u\theta^2)\varphi(\theta)d\theta
    }
\end{equation}
is bounded on $L^p(\R^2)$ if and only if $p>2$.  Mockenhaupt, Seeger and Sogge \cite{MSS92} also gave a short proof of the above $L^p$ bounds via introducing local smoothing estimates for linear wave equations. The goal of this paper is to study generalizations of \eqref{230209e1_1} and \eqref{230209e1_2}. Consider the maximal operator 
\begin{equation}\label{230209e1_3}
    \sup_{|u|\le  1} \anorm{
    \int_{\R} f(x-\theta, y-u \theta-\theta^2) \varphi(\theta) d\theta
    }. 
\end{equation}
In the definition of the maximal operator \eqref{230209e1_3}, because of the term $\theta^2$, we see that the example of standard Nikodym sets can be avoided. However, by considering the image of a Nikodym set under the maps
\begin{equation}\label{230710e1_4}
    (x, y)\mapsto (x, x^2+y),
\end{equation}
one can check that the maximal operator \eqref{230209e1_3} also fails to be $L^p$ bounded for every $p<\infty$. \\

Note that the last example relies crucially on the fact that we can complete squares. If we change $\theta^2$ in \eqref{230209e1_3} to $\theta^3$, then we will see that the the example no longer works and the maximal operator 
\begin{equation}
    \sup_{|u|\le 1} 
    \anorm{
    \int_{\R}
    f(x-\theta, y-u\theta-\theta^3)\varphi(\theta)d\theta
    }
\end{equation}
is bounded on $L^p(\R^2)$ for some $p<\infty$.  Indeed, we will show that the two-parameter maximal function
\begin{equation}\label{230527e1_6}
    \sup_{|u_1|\le 1, |u_3|\simeq 1} \anorm{
    \int_{\R} f(x-\theta, y-u_1 \theta-u_3\theta^3) \varphi(\theta) d\theta
    }
\end{equation}
is bounded on $L^p(\R^2)$ for some $p<\infty$. Here $|u_3|\simeq 1$ means $1/C\le |u_3|\le C$ where $C>0$ is an arbitrary real number and the $L^p$ bounds depend on $C$; we need $u_3$ to be away from $0$, in order to avoid the example of Nikodym sets. \\

Let us state our main theorem. Let $d\ge 2$. Let $\bfv=(v_1, \dots, v_{d-1})\in \R^{d-1}$.  Take a smooth function $\gamma(\theta; \bfv): \R\times \R^{d-1}\to \R$ and a smooth bump function $\chi(\theta; \bfv)$ supported near the origin. In this paper, we study $L^p$ bounds of the maximal operator 
\begin{equation}\label{240530e1_7}
    \mathcal{M}_{\gamma, \chi} f(x, y):=\sup_{\bfv\in \R^{d-1}} \big|\mathcal{A}_{\gamma, \chi}f(x, y; \bfv)\big|,
\end{equation}
where 
\begin{equation}\label{240530e1_8}
    \mathcal{A}_{\gamma, \chi}f(x, y; \bfv):=\int_{\R} f(x-\theta, y-\gamma(\theta; \bfv))\chi(\theta; \bfv)d\theta. 
\end{equation}
If it is clear from the context what amplitude function $\chi$ is involved, we often abbreviate $\mc{M}_{\gamma, \chi}$ to $\mathcal{M}_{\gamma}$ and $\mc{A}_{\gamma, \chi}$ to $\mc{A}_{\gamma}$.  Denote 
\begin{equation}\label{tagent}
    \bfT(\theta; \bfv):=
    \pnorm{
    \frac{\partial \gamma(\theta; \bfv)}{\partial \theta}, \dots, 
    \frac{\partial^{d} \gamma(\theta; \bfv)}{\partial \theta^{d}}
    }^T.
\end{equation}
We say that $\gamma$ satisfies a $(d-1)$-parameter cinematic curvature condition at the origin if  
\begin{equation}\label{Y_230330nondegenerate}
    \det\begin{bmatrix}
        \frac{\partial \bfT}{\partial \theta}, \frac{\partial \bfT}{\partial v_1}, \dots, \frac{\partial \bfT}{\partial v_{d-1}}
    \end{bmatrix}\Big|_{\theta=0; \bfv=0}\neq 0.
\end{equation}
For the sake of simplicity, if $\gamma$ satisfies \eqref{Y_230330nondegenerate}, then we often say that it is non-degenerate at the origin. 
\begin{theorem}\label{230329theorem1_1}
    Let $d\ge 3$. Let $\gamma(\theta; \bfv): \R\times \R^{d-1}\to \R$ be a smooth function that satisfies the $(d-1)$-parameter cinematic curvature condition as in \eqref{Y_230330nondegenerate}. Then there exists $p_d>0$ depending only on $d$ such that 
    \begin{equation}\label{230527e1_11}
        \norm{
        \mathcal{M}_{\gamma, \chi} f
        }_{L^p(\R^2)} \lesim_{p, \gamma, \chi} \norm{f}_{L^p(\R^2)},
    \end{equation}
    for every $p>p_d$ and every smooth bump function $\chi(\theta; \bfv)$ that is supported in a sufficiently small neighborhood of the origin. The smallness of the support of $\chi(\theta; \bfv)$ depends only on $\gamma$. 
\end{theorem}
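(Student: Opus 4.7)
The natural route is to reduce the maximal estimate to a local smoothing estimate for the averaging operator $\mathcal{A}_{\gamma,\chi}$ viewed as a map $L^p(\R^2)\to L^p(\R^2\times B^{d-1})$, and then recover the supremum over $\bfv$ via Sobolev embedding. Since $W^{s,p}(\R^{d-1})\hookrightarrow L^\infty(\R^{d-1})$ for $s>(d-1)/p$, it suffices to prove, for some finite $p$ and some $s>(d-1)/p$, an inequality of the form
\[
\norm{(1-\Delta_{\bfv})^{s/2}\mathcal{A}_{\gamma,\chi}f}_{L^p(\R^2\times B^{d-1})}\lesim \norm{f}_{L^p(\R^2)}.
\]
After a Littlewood--Paley decomposition at frequency scale $\lambda$ in $(x,y)$, this in turn reduces to a frequency-localized local smoothing bound $\norm{\mathcal{A}_\gamma P_\lambda f}_{L^p(\R^2\times B^{d-1})}\lesim \lambda^{-\sigma}\norm{f}_{L^p(\R^2)}$ with $\sigma$ strictly greater than $(d-1)/p$.

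The next step is a Fourier integral operator analysis of the frequency-localized piece. Writing $\mathcal{A}_\gamma P_\lambda f$ as an oscillatory integral with phase $\xi(x-\theta)+\eta(y-\gamma(\theta;\bfv))$ and applying stationary phase in $\theta$ at the critical point determined by $\xi+\eta\,\partial_\theta\gamma=0$, one obtains an FIO acting on $(x,y,\bfv)$. The cinematic curvature assumption \eqref{Y_230330nondegenerate} is precisely the statement that the map
\[
(\theta,\bfv)\mapsto \bigl(\partial_\theta\gamma,\partial_\theta^2\gamma,\ldots,\partial_\theta^d\gamma\bigr)
\]
is a local diffeomorphism of $\R^d$ at the origin, so after a smooth change of variables the phase locally becomes that of the extension operator associated to the $d$-dimensional moment curve $(t,t^2,\ldots,t^d)$, and the relevant wave packets are tubes in $(x,y,\bfv)$-space of dimensions roughly $\lambda^{-1/d}\times\lambda^{-2/d}\times\cdots\times\lambda^{-1}$.

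With this identification in hand, I would invoke the Bourgain--Demeter--Guth decoupling theorem for the moment curve in $\R^d$, which delivers a sharp $\ell^2$-decoupling into $\lambda^{-1/d}$ arcs for $p\ge d(d+1)$ with loss $\lambda^\epsilon$. Combining the decoupled pieces with orthogonality in the transverse $(x,y,\bfv)$ directions and a standard square-function argument in the spirit of Mockenhaupt--Seeger--Sogge \cite{MSS92} yields a local smoothing gain that exceeds the Sobolev threshold $(d-1)/p$ once $p$ is taken large enough, determining an admissible exponent $p_d$ of order $d^2$.

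The principal obstacle is the genuinely multi-parameter nature of the problem: one must check that the wave packet geometry on the enlarged space $\R^2\times B^{d-1}$ is really governed by the $d$-dimensional moment curve uniformly in $\bfv$, and carefully track all non-principal contributions from the FIO parametrix so that they are absorbable into $\lambda^\epsilon$ errors. Additional care is needed near the boundary of the $\theta$-support and in the regime where $\eta$ is small compared to $\xi$: in the first, stationary phase fails and one uses non-stationary phase for rapid decay; in the second, the phase degenerates and one reduces to a one-parameter maximal bound along the $\xi$-direction, which poses no Nikodym obstruction once $\chi$ is supported in a small enough neighborhood of the origin where \eqref{Y_230330nondegenerate} holds uniformly.
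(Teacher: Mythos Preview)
Your high-level strategy matches the paper's: reduce to a frequency-localized local smoothing estimate
\[
\norm{\mathcal{A}_\gamma P_k f}_{L^p(\R^2\times\R^{d-1})}\lesim 2^{-\sigma k}\norm{f}_p
\]
with $\sigma>(d-1)/p$, then recover the sup over $\bfv$ by Sobolev embedding; and the endgame is indeed Bourgain--Demeter--Guth decoupling for the moment curve. But there is a genuine gap at the step where you ``apply stationary phase in $\theta$.''

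The cinematic curvature condition \eqref{Y_230330nondegenerate} does \emph{not} imply that $\partial_\theta^2\Phi=\eta\,\partial_\theta^2\gamma$ is bounded away from zero at the critical point. In fact the paper's normal forms are built precisely around the failure of this: in Form (II), for instance, $\partial_\theta^j\gamma(0;0)=0$ for all $2\le j\le d$ and only $\partial_\theta^{d+1}\gamma(0;0)\ne 0$, so the critical point of the phase is degenerate of order $d$. Simple stationary phase then gives only a $2^{-k/(d+1)}$ gain, far short of the $\sigma>(d-1)/p$ you need; moreover, the resulting phase after resolution is not immediately the moment-curve extension because the change of variables you propose, $(\theta,\bfv)\mapsto(\partial_\theta\gamma,\dots,\partial_\theta^d\gamma)$, does not decouple the roles of $\theta$ and $\bfv$ in the oscillatory integral---you still have to integrate in $\theta$ first.

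What the paper actually does to bridge this gap is an iterative reduction algorithm (Section~6). One identifies the smallest $d_1$ with $|\partial_\theta^{d_1}\Phi|\gtrsim 1$, localizes around the zero $\theta_{d_1-1}$ of $\partial_\theta^{d_1-1}\Phi$, performs a dyadic decomposition in the sizes of the lower-order Taylor coefficients at $\theta_{d_1-1}$, and rescales. Each pass strictly decreases the ``derivative degree,'' and the curvature hypothesis enters as a Jacobian computation (Claim~\ref{221124claim2_4}) guaranteeing the successive changes of variables in $\bfv$ are nondegenerate. Only after the algorithm terminates with a uniform bound $|\partial_\theta^2\Phi|\gtrsim1$ does one apply stationary phase and then verify (Claim~\ref{221126claim3_1}) that the reduced phase satisfies the moment-curve curvature needed for the decoupling lemma. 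Your proposal omits this entire mechanism; without it the argument does not close for any finite $p$.
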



The $d=2$ case of Theorem \ref{230329theorem1_1} is due to Sogge \cite{Sog91}, where he introduced the (one-parameter) cinematic curvature condition and generalized Bourgain's result \cite{Bou86} for the maximal operator \eqref{230209e1_2}. The cases $d=3$ and $d=4$ were considered in the recent work Lee, Lee and Oh \cite{LLO23}. We generalize these results to multi-parameter cases. 

If in Theorem \ref{230329theorem1_1} we take $d=3$ and 
\begin{equation}\label{231102e1_12}
 \gamma(\theta; \bfv)=v_1 \theta+v_2\theta^3,   
\end{equation}
then one can compute directly that $\gamma$ satisfies the two-parameter cinematic curvature condition in \eqref{Y_230330nondegenerate} if and only if $v_2$ is away from $0$. Therefore, the bound we claimed above for the maximal operator \eqref{230527e1_6} is a special case of Theorem \ref{230329theorem1_1}. \\

In the proof of Theorem \ref{230329theorem1_1}, we follow the framework of \cite{GGW22}. Indeed, one may use Theorem \ref{230329theorem1_1} to give an alternative proof to the case $m=1$ (in a general dimension $n$) in \cite{GGW22}. We should also emphasize here that similar frameworks already appeared in earlier works in \cite{BGHS21} and \cite{KLO23}. \\

In the two-parameter case $d=3$, by combining Theorem \ref{230329theorem1_1} with the result of Pramanik, Yang and Zahl \cite{PYZ22}, we will see that $p_3=3$ for maximal operators along ellipses. Let us first define these maximal operators. Let $\epsilon_0>0$ be a small real number. Let $\chi(\theta; a, b): \R\times \R^2\to \R$ be a smooth bump function satisfying 
\begin{enumerate}
    \item[1)] $\supp_2(\chi)$ is contained in a sufficiently small neighborhood of $(1, 1)$, where 
    \begin{equation}
        \supp_2(\chi):=\{(a, b): \text{There exists } \theta \text{ such that } (\theta; a, b)\in \supp(\chi)\};
    \end{equation}
    \item[2)] For every $(a, b)$, it holds that
    \begin{equation}
        \supp(\chi(\ \cdot \ ; a, b))\subset (-(1-\epsilon_0)a, (1-\epsilon_0)a).
    \end{equation}
\end{enumerate}
Let us define 
\begin{equation}
    \mc{M}_{\ellipse}f(x, y):= 
    \sup_{a, b}
    \Big|\int_{\R} f\Big(x-\theta, 
    y-b\sqrt{
    1-\pnorm{
    \frac{\theta}{a}
    }^2
    }
    \Big)  \chi(\theta; a, b)d\theta\Big|.
\end{equation}
We have 
\begin{corollary}\label{230609theorem2_1}
    For every $0<\epsilon_0<1/2$ and every $\chi$ satisfying the above 1) and 2), it holds that 
    \begin{equation}
        \norm{
        \mc{M}_{\ellipse} f
        }_{L^p(\R^2)}
        \lesim_{p, \epsilon_0, \chi}
        \norm{f}_{L^p(\R^2)},
    \end{equation}
    for every $p>3$. 
\end{corollary}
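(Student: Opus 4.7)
My plan is to verify the ellipse family $\gamma(\theta;a,b) = b\sqrt{1-(\theta/a)^2}$ satisfies the two-parameter cinematic curvature condition away from the top of the ellipse, apply Theorem \ref{230329theorem1_1} in that regime, and use \cite{PYZ22} to handle both the degenerate regime near $\theta=0$ and to sharpen the general exponent $p>p_3$ produced by Theorem \ref{230329theorem1_1} to the claimed $p>3$.

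For the curvature check, compute the first three $\theta$-derivatives:
\[
\partial_\theta\gamma = -\frac{b\theta}{a^2\sqrt{1-(\theta/a)^2}},\quad \partial_\theta^2\gamma = -\frac{b}{a^2(1-(\theta/a)^2)^{3/2}},\quad \partial_\theta^3\gamma = -\frac{3b\theta}{a^4(1-(\theta/a)^2)^{5/2}}.
\]
Assemble $\bfT$ and form the $3\times 3$ Jacobian in \eqref{Y_230330nondegenerate}. The $\theta\to-\theta$ symmetry of $\gamma$ makes the odd-order derivatives vanish at $\theta_0=0$, causing two rows of the Jacobian to become parallel and the determinant to vanish; however, a direct evaluation (e.g.\ at $\theta_0=1/2$, $a_0=b_0=1$) shows the determinant is nonzero for $\theta_0\ne 0$, and by continuity the condition holds throughout $\{0<|\theta/a|<1\}$. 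Split $\chi=\chi_{\mathrm{far}}+\chi_{\mathrm{near}}$ supported respectively in $\{|\theta|\gtrsim 1\}$ and $\{|\theta|\ll 1\}$; cover $\supp(\chi_{\mathrm{far}})$ by finitely many small neighborhoods of base points with $\theta_0\ne 0$ and apply Theorem \ref{230329theorem1_1} locally after a smooth change of variables sending each base point to the origin, obtaining an $L^p$ bound for $p>p_3$. The $\chi_{\mathrm{near}}$ piece is handled separately: near the top of each ellipse the curve is, to leading order, a parabola $y\approx b - b\theta^2/(2a^2)$, and after freezing the redundant parameter Sogge's one-parameter cinematic theorem \cite{Sog91} yields an $L^p$ bound for $p>2$.

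To improve from $p>p_3$ to the sharp $p>3$, I would revisit the proof of Theorem \ref{230329theorem1_1} and identify the step that controls the exponent. Following the framework of \cite{GGW22}, the maximal estimate reduces (after wave-packet decomposition and pigeonholing) to a Wolff/Kakeya-type incidence estimate for an auxiliary one-parameter subfamily of cinematic planar curves. The Pramanik--Yang--Zahl result \cite{PYZ22} supplies the sharp such incidence bound, namely the cinematic analogue of Wolff's tangency theorem for circles, and plugging it into the reduction improves the threshold to $p>3$.

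The main obstacle is this final sharpening step: isolating the precise incidence input in the proof of Theorem \ref{230329theorem1_1}, verifying that the one-parameter subfamily obtained by freezing one of $a$ or $b$ satisfies the hypotheses of \cite{PYZ22} with constants uniform across $\supp(\chi)$, and matching the scales between the multi-scale reduction of \cite{GGW22} and the incidence theorem of \cite{PYZ22}.
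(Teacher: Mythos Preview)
Your curvature computation and the observation that the two-parameter cinematic determinant vanishes at $\theta=0$ are correct; the paper confirms the determinant equals $\frac{6b^2\theta}{(a^2-\theta^2)^{9/2}}$. However, both your handling of the near piece and your sharpening step have genuine gaps that diverge from what the paper actually does.

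\textbf{The near piece.} There is no redundant parameter to freeze. Near $\theta=0$ the curve expands as $b - \tfrac{b}{2a^2}\theta^2 - \tfrac{b}{8a^4}\theta^4 - \cdots$, and the quartic and higher coefficients depend on $(a,b)$ through combinations independent of $b/a^2$, so the family is genuinely two-parameter. Applying Sogge's one-parameter theorem for each fixed $a$ gives a bound on $\|\sup_b |A_{a,b}f|\|_{L^p}$ but says nothing about $\sup_{a,b}$. The paper avoids this entirely by making the splitting \emph{frequency-dependent}: at Littlewood--Paley scale $2^k$ it cuts at $|\theta|\le 2^{-\kappa k}$, so the near piece is controlled by the trivial $L^\infty$ bound $\lesssim 2^{-\kappa k}$, while on $|\theta|\ge 2^{-\kappa k}$ the cinematic determinant is $\gtrsim 2^{-\kappa k}$ and the quantitative form of Theorem~\ref{230329theorem1_1} (Proposition~\ref{230329prop2_2}) still yields decay $2^{-k/p+C\kappa k}$.

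\textbf{The sharpening to $p>3$.} This is the more serious misreading. The proof of Theorem~\ref{230329theorem1_1} does not pass through a Wolff/Kakeya-type incidence lemma into which one could substitute \cite{PYZ22}; it runs via a reduction algorithm and decoupling (Lemma~\ref{221126lemma3_2}). The paper's route is external interpolation: \cite{PYZ22} is invoked as a stand-alone $L^3$ bound for the $\delta$-thickened three-parameter maximal operator $\mathfrak{M}_\delta$ (after a shear $y\mapsto x+y$ to verify the \cite{PYZ22} curvature condition even near $\theta=0$), yielding $\|\mc{M}_{\mathrm{ellipse}}P_k f\|_{L^3}\lesssim_\epsilon 2^{\epsilon k}\|f\|_{L^3}$. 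This is then interpolated against the high-$p$ decay $\|\mc{M}_{\mathrm{ellipse}}P_k f\|_{L^p}\lesssim 2^{-\kappa_p k}\|f\|_{L^p}$ coming from Proposition~\ref{230329prop2_2}, and summing in $k$ gives the result for all $p>3$. You should restructure your argument around this interpolation rather than trying to locate an incidence input inside the proof of Theorem~\ref{230329theorem1_1}.
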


Operators of the form $\mathcal{M}_{\ellipse}$ seem to first appear in Erdogan's work \cite{Erd03}. Lee, Lee and Oh \cite{LLO23} proved that $\mathcal{M}_{\ellipse}$ is bounded on $L^p(\R^2)$ for $p>4$, and proved that it is unbounded if $p\le 3$. \\

Next, we introduce more examples that satisfy the cinematic curvature condition \eqref{Y_230330nondegenerate}. Take $d\ge 3$. For $1\le d'\le d$, 
denote 
\begin{equation}
    \bfu_{-d'}:=(u_1, \dots, u_{d'-1}, u_{d'+1}, \dots, u_d). 
\end{equation}
Define 
\begin{equation}\label{231124e1_18}
    \gamma(\theta; \bfu_{-d'}):=
    \sum_{1\le d''\le d, d''\neq d'}
    u_{d''} \frac{\theta^{d''}}{(d'')!}.
\end{equation}
Via elementary calculations, we see that if we assume that $1\le d'<d$ and that $|u_{d'+1}|\simeq 1$, then $\gamma(\theta; \bfu_{-d'})$ satisfies the $(d-1)$-parameter cinematic curvature condition \eqref{Y_230330nondegenerate}. Therefore, as a consequence of Theorem \ref{230329theorem1_1}, we immediately obtain 
\begin{corollary}\label{231126corollary1_3}
    For every $d\ge 3, 1\le d'< d$, it holds that 
\begin{equation}\label{231124e1_19}
    \Norm{
    \sup_{
    \substack{
    |u_{d''}|\lesim 1, d''\neq d'\\
    |u_{d'+1}|\simeq 1
    }
    }
    \anorm{
    \int_{0}^1 
    f(x-\theta, y-\gamma(\theta; \bfu_{-d'}))d\theta
    }
    }_{
    L^p(\R^2)
    } 
    \lesim_{p, \gamma} \norm{f}_{L^p(\R^2)},
\end{equation}
for some $p<\infty$. Here the notation $d''$ in \eqref{231124e1_19} is the same as the one in \eqref{231124e1_18}. 
\end{corollary}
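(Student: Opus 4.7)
My plan is to reduce Corollary~\ref{231126corollary1_3} to Theorem~\ref{230329theorem1_1}. Two steps are needed: first, I would verify that $\gamma(\theta;\bfu_{-d'})$ satisfies the cinematic curvature condition \eqref{Y_230330nondegenerate} at every relevant base point $(\theta_0,\bfv_0)$, not only at the origin; second, I would localize the supremum and the $\theta$-integration around each base point and pass from the local estimate of Theorem~\ref{230329theorem1_1} to a global estimate by compactness.

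The substantive calculation is to show that the determinant in \eqref{Y_230330nondegenerate}, evaluated at $(\theta,\bfu_{-d'})$, equals $(-1)^{d'-1}u_{d'+1}$, and in particular is independent of $\theta$ and of the other $u_{d''}$. Writing $p_{d''}(\theta):=\theta^{d''}/(d'')!$ and $\mathbf{W}_{d''}:=\partial\bfT/\partial u_{d''}$, one computes that $\mathbf{W}_{d''}$ has $k$-th entry $p_{d''-k}(\theta)$ for $k\le d''$ and is zero for $k>d''$, so the matrix with columns $\mathbf{W}_1,\dots,\mathbf{W}_d$ is upper triangular with unit diagonal and has determinant $1$. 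Since $\bfT=\sum_{d''\ne d'}u_{d''}\mathbf{W}_{d''}$, differentiating in $\theta$ and using the identity $\mathrm{shift}(\mathbf{W}_{d''})=\mathbf{W}_{d''-1}$ for $d''\ge 2$ yields
\begin{equation*}
\frac{\partial\bfT}{\partial\theta}=\sum_{\substack{d''\ne d'\\ d''\ge 2}}u_{d''}\mathbf{W}_{d''-1}.
\end{equation*}
Every $\mathbf{W}_{d''-1}$ on the right is already a column $\mathbf{W}_j$ ($j\ne d'$) of the matrix, with the sole exception of the term $d''=d'+1$, which contributes $u_{d'+1}\mathbf{W}_{d'}$. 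Column operations therefore replace $\partial\bfT/\partial\theta$ by $u_{d'+1}\mathbf{W}_{d'}$ without changing the determinant, and reordering the columns produces the factor $(-1)^{d'-1}$ and the claimed formula. Under the assumption $|u_{d'+1}|\simeq 1$, this determinant is uniformly nonzero.

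For the globalization, I would observe that the supremum region together with $\theta\in[0,1]$ is compact, so a partition of unity reduces matters to estimating finitely many maximal operators whose amplitudes are supported in small neighborhoods of base points $(\theta_0,\bfv_0)$. Introducing $\tilde\theta=\theta-\theta_0$ and $\tilde\bfv=\bfv-\bfv_0$, together with the translated profile $\hat\gamma(\tilde\theta;\tilde\bfv):=\gamma(\tilde\theta+\theta_0;\tilde\bfv+\bfv_0)-\gamma(\theta_0;\tilde\bfv+\bfv_0)$, produces a function vanishing at $\tilde\theta=0$ whose cinematic curvature condition at the origin coincides with the one already verified for $\gamma$ at $(\theta_0,\bfv_0)$. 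The subtraction introduces a $\tilde\bfv$-dependent vertical shift $y\mapsto y-\gamma(\theta_0;\tilde\bfv+\bfv_0)$; since $\tilde\bfv$ ranges over a bounded set, one may pointwise dominate the localized operator by $\sup_{|s|\le C}\mathcal{M}_{\hat\gamma}f(x-\theta_0,y-s)$, which in turn is controlled by the one-dimensional Hardy--Littlewood maximal function in $y$ applied to $\mathcal{M}_{\hat\gamma}f$. Applying Theorem~\ref{230329theorem1_1} to $\hat\gamma$ and using $L^p$-boundedness of the Hardy--Littlewood maximal function, then summing the finitely many patches, yields \eqref{231124e1_19}. I expect the determinant calculation to be the main substantive step; the translation, partition of unity, and Hardy--Littlewood majorization are essentially routine.
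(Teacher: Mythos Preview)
Your determinant computation is correct and is exactly the ``elementary calculation'' the paper alludes to: the matrix in \eqref{Y_230330nondegenerate} has determinant $(-1)^{d'-1}u_{d'+1}$ at every point $(\theta,\bfu_{-d'})$, so the condition holds uniformly on the region $|u_{d'+1}|\simeq 1$. The compactness/partition-of-unity reduction to local pieces is also the right way to pass from the local statement of Theorem~\ref{230329theorem1_1} to the global one in the corollary, and is implicit in the paper's one-line deduction.

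There is, however, a genuine slip in your last step. The pointwise bound
\[
\sup_{|s|\le C}\,\mathcal{M}_{\hat\gamma}f(x-\theta_0,y-s)\ \lesssim\ M_{\mathrm{HL}}\bigl(\mathcal{M}_{\hat\gamma}f(x-\theta_0,\,\cdot\,)\bigr)(y)
\]
is false: the Hardy--Littlewood maximal function averages, while the left side is a supremum over translates, and a narrow spike shows the two are incomparable (neither pointwise nor in $L^p$). Fortunately this detour is unnecessary. The tangent vector $\bfT$ in \eqref{tagent} involves only $\partial_\theta^k\gamma$ for $k\ge 1$, so the curvature condition \eqref{Y_230330nondegenerate} is insensitive to adding any function of $\bfv$ alone to $\gamma$. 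Hence you may simply set $\hat\gamma(\tilde\theta;\tilde\bfv):=\gamma(\tilde\theta+\theta_0;\tilde\bfv+\bfv_0)$ without subtracting anything; the localized operator then equals $\mathcal{M}_{\hat\gamma,\tilde\chi}f(x-\theta_0,y)$ exactly, and Theorem~\ref{230329theorem1_1} applies directly after an $x$-translation. (This is also how the paper handles the zeroth-order term $\gamma_0(\bfv)$ in the normal-form reduction of Section~\ref{section_normal_form}, via the change $y-\gamma_0(\bfv)\mapsto y$ at the level of Proposition~\ref{230329prop2_2}.) With this correction your argument is complete and matches the paper's intended proof.
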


Moreover, for $1< d'< d$, if we remove the constraint $|u_{d'+1}|\simeq 1$ in \eqref{231124e1_19}, and consider the maximal operator 
\begin{equation}\label{231126e1_20}
    \sup_{
    \substack{
    |u_{d''}|\lesim 1, d''\neq d'
    }
    }
    \anorm{
    \int_{0}^1 
    f(x-\theta, y-\gamma(\theta; \bfu_{-d'}))d\theta
    },
\end{equation}
then it is easy to see that \eqref{231126e1_20} is not bounded on $L^p(\R^2)$ for any $p<\infty$, as it is stronger than the maximal function 
\begin{equation}
    \sup_{|u_1|\le 1} \anorm{
    \int_0^1 
    f(x-\theta, y-u_1 \theta)d\theta
    }.
\end{equation}
It remains as an interesting question whether or not for the case $d'=1$ one can remove the constraint $|u_{d'+1}|\simeq 1$ in \eqref{231124e1_19}, and still prove an analogue of \eqref{231124e1_19}. Let us be more precise. Take $d\ge 3$. Denote 
\begin{equation}
    \bfu_{-1}:=(u_2, u_3, \dots, u_d),
\end{equation}
and 
\begin{equation}
    \gamma(\theta; \bfu_{-1}):= u_2\frac{\theta^2}{2!}+u_3\frac{\theta^3}{3!}+\dots+
    u_d\frac{\theta^d}{d!}.
\end{equation}
One very interesting question that remains is to prove 
    \begin{equation}\label{231127e1_24}
            \Norm{
    \sup_{
    \substack{
    |u_{d''}|\lesim 1, 2\le d''\le d
    }
    }
    \anorm{
    \int_{0}^1 
    f(x-\theta, y-\gamma(\theta; \bfu_{-1}))d\theta
    }
    }_{
    L^p(\R^2)
    } 
    \lesim_{p, \gamma} \norm{f}_{L^p(\R^2)},
    \end{equation}
    for some $p<\infty$. The case $d=3$  is relatively easier, and can be handled via the argument in the current paper. However, when $d$ gets larger, singularities of the multi-parameter cinematic curvatures become much more complicated to study, and our method does not seem to get even close to a good understanding of \eqref{231127e1_24}. \\

Next, let us state a corollary of Corollary \ref{231126corollary1_3}. Recall that Bourgain \cite{Bou86} and Marstrand \cite{Mar87} independently proved that if $E\subset \R^2$ has the following property that 
\begin{equation}
    \mc{L}^2\pnorm{\set{
    (x, y)\in \R^2: (x, y)+ rS^1\subset E \text{ for some } r>0
    }
    }>0,
\end{equation}
where $S^1$ is the unit circle on $\R^2$ and $\mc{L}^2$ denotes the two dimensional Lebesgue measure, then $E$ itself also must have positive Lebesgue measure. In other words, Nikodym sets for circles do not exist. We generalize this result to polynomial curves without linear terms, and obtain that Nikodym sets for polynomial curves without linear terms do not exist. 
\begin{corollary}\label{231127corollary1_4}
    Let $E\subset \R^2$ be a measurable set. For $\lambda>0$, let $E_{\lambda}$ be the collection of points $(x, y)\in \R^2$ such that 
    \begin{equation}
    \mc{L}^1\pnorm{
    \set{
    |\theta|\le 1: (x+\theta, y+ u_2\theta^2+u_3 \theta^3+\dots+ u_d\theta^d)\in E
    }
    }\ge \lambda
    \end{equation}
    for some $u_2, \dots, u_d\in \R$. If $\mc{L}^2(E_{\lambda})>0$ for some $\lambda>0$, then $E$ itself also must have positive measure, that is, $\mc{L}^2(E)>0$. 
\end{corollary}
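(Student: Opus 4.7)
The plan is to prove the contrapositive: if $\mc{L}^2(E) = 0$, then $\mc{L}^2(E_\lambda) = 0$ for every $\lambda > 0$. Writing $E_\lambda^R$ for the subset of $E_\lambda$ whose defining parameters can be chosen with $|u_k| \le R$, we have $E_\lambda = \bigcup_{R > 0} E_\lambda^R$, so it suffices to show each $E_\lambda^R$ has measure zero. By outer regularity, cover $E$ by open sets $U \supset E$ of arbitrarily small measure; since the integral condition is monotone in the target set, $E_\lambda^R \subset U_\lambda^R$, where $U_\lambda^R$ is defined as $E_\lambda^R$ with $U$ in place of $E$. Chebyshev's inequality then reduces the problem to proving that the truncated maximal operator
\[
M_R f(x,y) := \sup_{|u_k| \le R} \Bigl| \int_{-1}^1 f\Bigl(x - \theta,\, y - \sum_{k=2}^d u_k \theta^k/k!\Bigr) d\theta \Bigr|
\]
is of restricted weak type $(p,p)$ for some $p < \infty$ (with a constant allowed to depend on $R$); letting $\mc{L}^2(U) \to 0$ then yields $\mc{L}^2(E_\lambda^R) = 0$.

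To bound $M_R$, I would decompose the parameter cube $[-R, R]^{d-1}$ into $O(\log R)$ dyadic pieces indexed by the index $k_* \in \{2, \dots, d\}$ maximizing $|u_k|^{1/k}$ together with the dyadic scale $\sigma := |u_{k_*}|^{1/k_*} \in [2^{-j-1}, 2^{-j}]$. On each piece, apply the anisotropic rescaling $\theta = \theta'/\sigma$, $x = x'/\sigma$: the curves become $\tilde\gamma(\theta'; \tilde u) = \sum_{k \ge 2} \tilde u_k (\theta')^k/k!$ with $|\tilde u_k| \le 1$ and $|\tilde u_{k_*}| \simeq 1$; the target $E$ is mapped to a set of zero measure; and the integration range $\theta' \in [-\sigma^{-1}, \sigma^{-1}]$ is covered by $O(\sigma^{-1})$ unit pieces, each absorbed into a separate amplitude function. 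When $k_* = 2$, the rescaled setting falls directly under Corollary \ref{231126corollary1_3} with $d' = 1$, supplying the needed $L^p$ bound on that piece.

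The main obstacle is the case $k_* \ge 3$: after rescaling, $|\tilde u_2|$ may be arbitrarily small, and a direct check of the cinematic-curvature determinant \eqref{Y_230330nondegenerate} at $\theta' = 0$ shows it vanishes whenever $\tilde u_2 = 0$, so none of the versions of Corollary \ref{231126corollary1_3} matches the rescaled curve. To handle this, I would subdivide the rescaled $\theta'$-interval into short pieces around base points $\theta_0 \ne 0$ and reparametrize via $v_j := \partial_{\theta'}^j \tilde\gamma(\theta_0; \tilde u)$ for $j = 2, \dots, d$, which, together with the constrained $v_1 = \partial_{\theta'} \tilde\gamma(\theta_0; \tilde u)$, furnishes new coordinates on the $(d-1)$-parameter family. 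For generic $\theta_0$, the effective second-order coefficient $v_2 = \tilde\gamma''(\theta_0; \tilde u)$ picks up a nonvanishing contribution of size $\tilde u_{k_*} \theta_0^{k_* - 2}/(k_* - 2)!$ even when $\tilde u_2 = 0$, so the cinematic curvature condition of \eqref{Y_230330nondegenerate} can be verified at a translated base point $(\theta_0, v^{(0)})$ with $v_2^{(0)} \ne 0$, and Theorem \ref{230329theorem1_1}, applied after a translation of the parameters, yields the required local $L^p$ bound. Finite additivity over a finite cover by base points completes this case, and summing the finitely many dyadic contributions closes the estimate for $M_R$.
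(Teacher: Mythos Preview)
Your reduction to a restricted weak-type bound for $M_R$ via outer regularity and Chebyshev is fine, but the treatment of the case $k_*\ge 3$ has a genuine gap. A direct computation shows that for the polynomial family $\tilde\gamma(\theta';\tilde u)=\sum_{k=2}^d \tilde u_k\,\theta'^k/k!$ with parameters $(\tilde u_2,\dots,\tilde u_d)$, the cinematic curvature determinant \eqref{Y_230330nondegenerate} equals $\tilde u_2$ at \emph{every} point $(\theta_0;\tilde u)$, not merely at $\theta_0=0$. (Write the columns $\partial_{\tilde u_m}\bfT$ as $e^{(m)}$, the vectors with $j$-th entry $\theta_0^{m-j}/(m-j)!$; then $\partial_{\theta'}\bfT=\sum_{m\ge 2}\tilde u_m e^{(m-1)}$, and since $e^{(2)},\dots,e^{(d)}$ already appear as columns, only the $\tilde u_2 e^{(1)}$ term survives.) Consequently, translating the base point to $\theta_0\ne 0$ does not help: the condition \eqref{Y_230330nondegenerate} is invariant under translations in $\theta$ and diffeomorphisms of the parameter space, so your reparametrization $v_j:=\partial_{\theta'}^j\tilde\gamma(\theta_0;\tilde u)$ cannot create nondegeneracy where there was none. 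You are conflating ``$v_2=\tilde\gamma''(\theta_0)\ne 0$'' with the determinant condition; the latter is a full-rank condition on the $d\times d$ matrix, and here it reduces to $\tilde u_2\ne 0$ regardless of $\theta_0$ or the choice of parameter coordinates.

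The paper's argument avoids this difficulty by a much simpler decomposition. Instead of slicing by the dominant index $k_*$, fix for each $(x,y)\in E_\lambda$ a choice of parameters $u_2(x,y),\dots,u_d(x,y)$ realizing the hypothesis, and split $E_\lambda$ according to the \emph{smallest} index $j$ with $u_j(x,y)\ne 0$. On the piece where $u_2=\dots=u_{j-1}=0$ and $|u_j|\in[1/C,C]$ (obtained by a limiting argument), the curve is $u_j\theta^j+\dots+u_d\theta^d$; after scaling in $y$ this is dominated by the maximal operator of Corollary~\ref{231126corollary1_3} with $d'=j-1$, simply by restricting the latter's supremum to the slice $u_1=\dots=u_{j-2}=0$. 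The residual case $u_2=\dots=u_d=0$ is the one-dimensional Hardy--Littlewood maximal function. This sidesteps entirely the need to verify \eqref{Y_230330nondegenerate} when the quadratic coefficient vanishes.
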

If we had proven \eqref{231127e1_24} already for some $p<\infty$, then Corollary \ref{231127corollary1_4} would follow immediately, by taking $f$ in \eqref{231127e1_24} to be the indicator function for the set $E$ in Corollary \ref{231127corollary1_4}. However, we will see later that to obtain Corollary \ref{231127corollary1_4}, we do not need \eqref{231127e1_24}, and the simpler version in Corollary \ref{231126corollary1_3} will be sufficient. 

\bigskip

In the end, we generalize Theorem \ref{230329theorem1_1} to the case of variable coefficients. Let $\vec{\Gamma}(x, y; \theta; \bfv)$ be a smooth map from $\R^2\times \R\times \R^{d-1}$ to $\R^2$. Let $\chi: \R^2\times \R\times \R^{d-1}\to \R$ be a smooth bump function supported near the origin. Consider the operator 
\begin{equation}\label{230603ea_1}
    \sup_{\bfv\in \R^{d-1}}
    \Big|\int_{\R} f(
    \vec{
    \Gamma}(x, y; \theta; \bfv)
    )
    \chi(x, y; \theta; \bfv)d\theta\Big|
\end{equation}
Without loss of generality let us assume that $\vec{\Gamma}(0)=0$. Moreover, assume that 
\begin{enumerate}
    \item[(H1)] 
    \begin{equation}
        \det
    \begin{bmatrix}
        \frac{\partial \vec{\Gamma}}{\partial x}(0), \ \ \frac{\partial \vec{\Gamma}}{\partial y}(0)
    \end{bmatrix}\neq 0,
    \end{equation} 
    \item[(H2)] 
    \begin{equation}
        \frac{
        \partial\vec{\Gamma}
        }{\partial \theta}(0)\neq 0,
    \end{equation}
\end{enumerate}
where $\vec{\Gamma}$ is written in the column form. The assumption (H1) says that $\Gamma(x, y; 0; 0)$ is locally a diffeomorphism in $x, y$. The assumption (H2) guarantees that the curve $\Gamma(0; \theta; 0)$ does not degenerate to a single point for instance. These two assumptions are minimal assumptions to make sense of the maximal operator \eqref{230603ea_1}. By linear change of variables in $x, y$, let us assume that 
\begin{equation}
    \begin{bmatrix}
        \frac{\partial \vec{\Gamma}}{\partial x}(0), \ \ \frac{\partial \vec{\Gamma}}{\partial y}(0)
    \end{bmatrix}
    =
    \begin{bmatrix}
        1, & 0\\
        0, & 1
    \end{bmatrix}
\end{equation}
This allows us to write 
\begin{equation}
    \vec{\Gamma}(x, y; \theta; \bfv)
    =
    (
    x+\gamma_1(x, y; \theta; \bfv), y+\gamma_2(x, y; \theta; \bfv)
    ),
\end{equation}
with 
\begin{equation}
    \frac{\partial \gamma_{\iota}}{\partial x}(0)
    =\frac{\partial \gamma_{\iota}}{\partial y}(0)=0, \ \ \ \iota=1, 2.
\end{equation}
Next, by the assumption (H2), and by a shearing transform in $x, y$, we can without loss of generality assume that 
\begin{equation}
    \frac{\partial \gamma_1}{\partial \theta}(0)\neq 0, \ \ \frac{\partial \gamma_2}{\partial \theta}(0)= 0.
\end{equation}
Thus, by a nonlinear change of variable in $\theta$, we without loss of generality assume that 
\begin{equation}\label{230615a_8}
    \vec{\Gamma}(x, y; \theta; \bfv)
    =
    (x-\theta, y-\gamma(x, y; \theta; \bfv)):=(x-\theta,h(x,y;\theta;\bfv)),
\end{equation}
with 
\begin{equation}
    \frac{\partial \gamma}{\partial x}(0)=\frac{\partial \gamma}{\partial y}(0)=\frac{\partial \gamma}{\partial \theta}(0)=0. 
\end{equation}
Define 
\begin{equation}
    \bfT(x, y; \theta; \bfv)
    =
    \pnorm{h(x,y;\theta;\bfv),
    \frac{\partial h(x, y; \theta; \bfv)}{\partial \theta}, \dots, 
    \frac{\partial^{d} h(x, y; \theta; \bfv)}{\partial \theta^{d}}
    }^T.
\end{equation}
Assume that 
\begin{enumerate}
    \item[(H3)]\label{H3} 
    \begin{equation}
       \det\begin{bmatrix}
        \frac{\partial \bfT}{\partial \theta}+\frac{\partial \bfT}{\partial x}, \frac{\partial \bfT}{\partial y}, \frac{\partial \bfT}{\partial v_1}, \dots, \frac{\partial \bfT}{\partial v_{d-1}}
    \end{bmatrix}\Big|_{x=y=0; \theta=0; \bfv=0}\neq 0.
    \end{equation}
    \end{enumerate}
Note that this is exactly the condition in  \cite[Definition 1.1]{Zah23}.  Under the assumptions (H1), (H2) and (H3), one can repeat the proof of Theorem \ref{230329theorem1_1} and show that the maximal operator \eqref{230603ea_1} satisfies the same $L^p$ bounds. 

\begin{theorem}\label{231102theorem1_3}
    Let $ \gamma(x,y;\theta;\bfv)$ be the function given in \eqref{230615a_8}. If we assume that it satisfies the assumptions (H3), then there exists $p_d>0$ depending only on $d$ such that
    \begin{equation}
        \left\|\sup_{\bfv\in \R^{d-1}}
    \Big|\int_{\R} f(x-\theta,y- \gamma(x, y; \theta; \bfv)
    )
    \chi(x, y; \theta; \bfv)d\theta\Big| \right\|_{L^p(\R^2)}\lesssim_{p,\gamma,\chi} \norm{f}_{L^p(\R^2)},
    \end{equation}
    for every $p>p_d$ and every smooth bump function $\chi(x,y;\theta;\bfv)$ that is supported in a sufficiently small neighbourhood of the origin. The smallness of the support of $\chi(x,y;\theta;\bfv)$ depends only on $\gamma$.
\end{theorem}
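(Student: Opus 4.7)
The plan is to transfer the proof of Theorem \ref{230329theorem1_1} to the variable-coefficient setting, using (H3) in place of \eqref{Y_230330nondegenerate} at every step. The proof of Theorem \ref{230329theorem1_1} (following \cite{GGW22}) rests on two kinds of estimates: a local smoothing / decoupling estimate for averages along the curve family applied at a fixed dyadic scale, and a broad--narrow iteration built on the quantitative non-degeneracy of the family. Neither ingredient uses translation invariance in an essential way; both require only a uniform lower bound on a curvature-type determinant, and (H3) is precisely this lower bound for the variable-coefficient family.

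First I would linearize the supremum by a measurable section $\bfv(x,y)$ and, by a partition of unity, reduce to $\chi$ supported in an arbitrarily small box around the origin. After a Littlewood--Paley decomposition $f = \sum_k f_k$, one studies the linearized operator acting on a single frequency piece. This reduces matters to an oscillatory integral operator whose phase is $\xi(x-\theta) + \eta(y - \gamma(x,y;\theta;\bfv(x,y)))$. Under (H3), a stationary-phase / $TT^*$ analysis yields a frequency gain $\| T f_k \|_{L^p} \lesssim 2^{-k\epsilon_0} \| f_k \|_{L^p}$ for some $\epsilon_0 = \epsilon_0(p,d) > 0$, in direct analogy with the translation-invariant case. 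The role played by $\partial_\theta \bfT$ in the translation-invariant cinematic curvature matrix is now played by $\partial_\theta \bfT + \partial_x \bfT$, because as $\theta$ varies along a fixed curve the base point $(x-\theta, y-\gamma)$ is displaced by an amount governed by $\partial_\theta$, so $\partial_\theta + \partial_x$ is the natural total derivative along the curve; similarly $\partial_y \bfT$ captures the genuinely variable-coefficient displacement transverse to the curve, and the $\partial_{v_j}$ columns encode the variation within the family. The determinant in (H3) is precisely the Jacobian measuring full non-degeneracy of $\bfT$ with respect to these motions.

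Next, at each stage of the broad--narrow / scale-induction argument one freezes the base point to the center $(x_j, y_j)$ of a spatial cube $Q_j$ of some small size $\delta$, Taylor-expands $\gamma(x, y; \theta; \bfv)$ around $(x_j, y_j)$, and treats the remainder, which is $O(\delta)$, as a perturbation. The frozen curve $\theta \mapsto \gamma(x_j, y_j; \theta; \bfv)$ is translation-invariant and, after an elementary $\bfv$-dependent adjustment, inherits a cinematic curvature condition of the form \eqref{Y_230330nondegenerate} as a direct consequence of (H3) combined with the normalizations $\partial_x \gamma(0) = \partial_y \gamma(0) = \partial_\theta \gamma(0) = 0$. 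Theorem \ref{230329theorem1_1} then applies to the frozen problem on $Q_j$; the resulting estimates are assembled over the cubes $Q_j$ and over the dyadic frequency scales $k$, with the perturbative $O(\delta)$ terms absorbed via the gain $2^{-k\epsilon_0}$ at sufficiently fine scales.

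The main obstacle is verifying that the non-degeneracy (H3) is preserved with uniform constants under all the rescalings, translations of base point, and parameter reparametrizations used inside the broad--narrow induction. The specific choice of the columns $\partial_\theta \bfT + \partial_x \bfT$ and $\partial_y \bfT$ is exactly what makes the determinant in (H3) covariant under these coordinate changes, so the required uniform bound propagates through the induction; this is the same observation that underlies \cite[Definition 1.1]{Zah23}. Once this invariance is in place, every remaining estimate in the proof of Theorem \ref{230329theorem1_1} transfers verbatim, and Theorem \ref{231102theorem1_3} follows with the same exponent $p_d$.
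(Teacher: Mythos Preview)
Your proposal has a genuine gap in the freezing step. You assert that the frozen family $\theta\mapsto\gamma(x_j,y_j;\theta;\bfv)$, viewed as a translation-invariant family in the $(d-1)$ parameters $\bfv$, satisfies the cinematic curvature condition \eqref{Y_230330nondegenerate} ``as a direct consequence of (H3)''. This is not so. The matrix in (H3) is $(d+1)\times(d+1)$ and contains the column $\partial_y\bfT$; the matrix in \eqref{Y_230330nondegenerate} for the frozen family is $d\times d$ and has no such column. Condition (H3) may hold precisely \emph{because} of the $y$--variation, while the frozen $\bfv$--family is degenerate for every fixed $(x_j,y_j)$. No ``elementary $\bfv$--dependent adjustment'' can recover a missing parameter. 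The perturbation bookkeeping is also off: a Taylor remainder of size $O(\delta)$ in the curve produces an error of size $O(\delta\cdot 2^k)$ in the phase at frequency $2^k$, which is negligible only when $\delta\lesssim 2^{-k}$; at that spatial scale there is no room left for the oscillatory/decoupling analysis you want to import from Theorem \ref{230329theorem1_1}.

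The paper proceeds differently: it does not freeze and reduce to Theorem \ref{230329theorem1_1}, but instead reruns the entire reduction algorithm of the translation-invariant proof directly on the variable-coefficient phase $\Phi(\bfx;\theta;\bfu;\bxi)=\theta\xi+\gamma(x,y;\theta;\bfu)\eta$, now treating $(x,y)$ together with $\bfu$ as the variables in each change of coordinates. Only two points require new verification. First, the analogue of Claim \ref{221124claim2_4}: the change of variables $(\bfx,\bfu)\mapsto(x-\theta_{d_1-1},\,y-\gamma(\bfx;\bfu;\theta_{d_1-1}),\,\Psi_{d_1-1,1},\dots,\Psi_{d_1-1,d})$ has Jacobian comparable to $1$; the computation reduces this Jacobian exactly to the determinant in (H3). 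Second, the analogue of Claim \ref{221126claim3_1}: the curvature hypothesis needed for the variable-coefficient decoupling lemma is the $(d+1)\times(d+1)$ determinant built from $\partial_y,\partial_x,\partial_{v_1},\dots,\partial_{v_{d-1}}$ applied to $P,\partial_\xi P,\dots,\partial_\xi^d P$, and this too is shown to equal (H3) up to a nonvanishing factor $(\partial_\xi\theta_1)^{d(d+1)/2}$. Thus the spatial variables $x,y$ genuinely participate as parameters in the algorithm, which is why (H3) contains the columns $\partial_\theta\bfT+\partial_x\bfT$ and $\partial_y\bfT$; they are not something to be frozen away.
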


Note that if $\gamma(x, y; \theta; \bfv)$ is constant in the $x, y$ variables, then the assumption (H3) becomes exactly the $(d-1)$-parameter cinematic curvature condition introduced in \eqref{Y_230330nondegenerate}, and therefore Theorem \ref{230329theorem1_1} is a special case of Theorem \ref{231102theorem1_3}.


    The case $d=2$ in Theorem \ref{231102theorem1_3} is a special case of Sogge \cite{Sog91} and Mockenhaupt, Seeger  and Sogge \cite{MSS93}, where the authors there  obtained local smoothing estimates for general Fourier integral operators. Theorem \ref{231102theorem1_3} is a generalization of these results to the setting of multi-parameters for variable coefficient maximal operators. \\

When proving Theorem \ref{231102theorem1_3}, we will indeed prove something stronger; more precisely, we will prove a ``high frequency decay", that is, 
    \begin{equation}
        \left\|\sup_{\bfv\in \R^{d-1}}
    \Big|\int_{\R} P_k f(x-\theta,y- \gamma(x, y; \theta; \bfv)
    )
    \chi(x, y; \theta; \bfv)d\theta\Big| \right\|_{L^p(\R^2)}\lesssim_{p,\gamma,\chi} 2^{
    -c k
    } \norm{f}_{L^p(\R^2)},
    \end{equation}
for every $p>p_d, k\in \N$, some positive constant $c>0$ depending only on $p$ and the curve $\gamma$. Here $P_k$ is a standard Littlewood-Paley projection operator. In particular, this verifies a local smoothing conjecture of Zahl \cite[page 4]{Zah23}. \\

\noindent {\bf Notation.} 
\begin{enumerate}
    \item We often use boldface letters to refer to vectors. For instance, we often write $\bfx=(x, y)$ and  $\bxi=(\xi, \eta)$. 
    \item We make the convention that $k!=\infty$ whenever $k$ is a negative integer.  
    \item Let $C>1$ and $I\subset \R$ be an interval. We use $C I$ to mean the interval of length $C|I|$ that has the same center as $I$.
    \item For a function $a(x, y)$, we use $\supp(a)$ to denote its support. We will use $x\in \supp(a)$ to mean that there exists some $y$ such that $(x, y)\in \supp(a)$.  We often need to deal with amplitude functions $a(\theta; \bfv; \bxi)$. Denote $\supp_1(a):=\{\theta: \theta\in \supp(a)\}$; similarly we define $\supp_2(a)$ and $\supp_3(a)$. 
    \item Unless otherwise specified, every implicit constant in this paper is allowed to depend on $\gamma$ and $d$, which we suppress from the notation. 
\end{enumerate}

\noindent {\bf Acknowledgements.} S.G. is partly supported by NSF-2044828, and partly by the Nankai Zhide Foundation. T.Y. is partly supported by the American Institute of Mathematics. The authors would like thank Josh Zahl for sharing the curved Nikodym example \eqref{230710e1_4} to them, and  pointing out an error regarding the assumption (H3) in Theorem \ref{231102theorem1_3} in an earlier version of the manuscript. The authors would also like to thank Andreas Seeger for sharing the elliptic maximal operator to them, and thank Josh Zahl for discussing the $L^3(\R^2)$ bounds of this operator during the Oberwolfach workshop ``Incidence Problems in Harmonic Analysis, Geometric Measure Theory, and Ergodic Theory", in June 2023.

\section{More connections to Zahl's work \cite{Zah23}}

The goal of this section is to make the exponent $p_d$ in Theorem \ref{230329theorem1_1} more quantitative, by combining Theorem \ref{230329theorem1_1} with Zahl's results in \cite{Zah23}. More precisely, we will see that in Theorem \ref{230329theorem1_1} it suffices to take 
\begin{equation}
    p_d= d+1. 
\end{equation}
It is not clear to us whether this exponent is sharp or not. \\

Let $d\ge 3$ be an integer. Denote 
\begin{equation}
    \bfv=(v_1, \dots, v_{d-1})\in \R^{d-1}.
\end{equation}
Let $\phi(\theta; x, \bfv)$ be a smooth function defined on $\R\times \R\times \R^{d-1}$. Let $\chi(\theta; x, \bfv)$ be a compactly supported smooth function. Define 
\begin{equation}
\mathfrak{M}_{\delta}f(x):=\sup_{\bfv}
    \frac{1}{\delta}
    \anorm{
    \int_{\R}\int_0^{\delta}
    f(\theta, \phi(\theta; x, \bfv)-y')  \chi(\theta; x, \bfv)d\theta dy'
    }.
\end{equation}
Moreover, let us assume that 
\begin{equation}\label{240530e2_4}
    \det
    \begin{bmatrix}
        \partial_x \phi, & \partial_{\bfv} \phi\\
        \partial_x \partial_{\theta} \phi, & \partial_{\bfv}\partial_{\theta}\phi\\
        \dots, & \dots\\
        \partial_x\partial_{\theta}^{d-1}, & \partial_{\bfv}\partial_{\theta}^{d-1}\phi
    \end{bmatrix} \neq 0
\end{equation}
at every point on the support of the function $\chi$. Then it is proven in Zahl \cite{Zah23} that for every $\epsilon>0$, it holds that 
\begin{equation}\label{240530e2_5}
    \norm{
    \mathfrak{M}_{\delta}f
    }_{L^{d}(\R)} \lesim_{\epsilon, \phi, \chi} \delta^{-\epsilon} \norm{f}_{L^d(\R^2)},
\end{equation}
for every $\delta\in (0, 1)$. \\

Recall the maximal operator in \eqref{240530e1_7} and the averaging operator in \eqref{240530e1_8}. Let us assume that $\gamma$ is a smooth function that satisfies the $(d-1)$-parameter cinematic curvature condition as in \eqref{Y_230330nondegenerate}. Let $P_k$ be a standard Littlewood-Paley projection. We will prove that 
\begin{equation}
    \norm{
    \mc{M}_{\gamma, \chi} P_k f
    }_{
    L^{d+1}(\R^2)
    }
    \lesim_{\epsilon, \gamma, \chi} 2^{\epsilon k}
    \norm{f}_{L^{d+1}(\R^2)},
\end{equation}
for every $\epsilon>0$ and every $k\in \N$. We will see that this follows directly from Zahl's bound \eqref{240530e2_5}. Indeed, by using the bound \eqref{240530e2_5} we can prove something much stronger. More precisely, we can show that 
\begin{equation}\label{240530e2_7}
    \Norm{
    \sup_{\bfv\in \R^{d-1}}\sup_{y\in \R} 
    |
    \mc{A}_{\gamma, \chi} P_k f(x, y; \bfv)
    |
    }_{L^{d+1}_x(\R)} \lesim_{\epsilon, \gamma, \chi} 2^{\epsilon k} \norm{f}_{L^{d+1}(\R^2)}.
\end{equation}
When checking Zahl's curvature condition \eqref{240530e2_4} for the maximal operator \eqref{240530e2_7}, we see exactly our $(d-1)$-parameter cinematic curvature condition as in \eqref{Y_230330nondegenerate}. In the end, we just need to interpolate the bound  \eqref{240530e2_7} with Theorem \ref{230329theorem1_1} (indeed its stronger version Proposition \ref{230329prop2_2}), and finish the proof of the claim at the beginning of this section that one can take $p_d=d+1$ in Theorem \ref{230329theorem1_1}.

\section{Maximal operators along ellipses}

The goal of this section to prove Corollary \ref{230609theorem2_1}, by assuming Theorem \ref{230329theorem1_1}, and by applying an $L^3(\R^2)$ bound due to Pramanik, Yang and Zahl \cite{PYZ22}.

\subsection{An $L^3$ bound}

Let $\delta>0$ be a small number. Define 
\begin{equation}
    \mc{M}_{\ellipse, \delta}f(x, y):= 
    \sup_{a, b}
    \frac{1}{\delta}
    \Big|\int_0^{\delta}\int_{\R} f\Big(x-\theta, 
    y-b\sqrt{
    1-\pnorm{
    \frac{\theta}{a}
    }^2
    }-y'
    \Big)  \chi(\theta; a, b)d\theta dy'\Big|.
\end{equation}
The goal of this section is to prove that 
\begin{equation}\label{230609e2_6}
    \norm{
    \mc{M}_{\ellipse, \delta}f
    }_{L^3(\R^2)}
    \lesim_{\epsilon, \chi} \delta^{-\epsilon}
    \norm{f}_{L^3(\R^2)},
\end{equation}
for every $\epsilon>0$, and every $\delta\in (0, 1)$. Such bounds will follow essentially from \cite{PYZ22}, which we state as follows.  \\

Let $\delta>0$ be a small number. Let $\chi(\theta; a, b)$ be a compactly supported smooth function. Define 
\begin{equation}
    \mathfrak{M}_{\delta}f(x):=\sup_{a, b}
    \frac{1}{\delta}
    \anorm{
    \int_{\R}\int_0^{\delta}
    f(\theta, \phi(\theta; x, a, b)-y')  \chi(\theta; a, b)d\theta dy'
    }
\end{equation}
Moreover, let us assume that $\phi$ is a smooth function satisfying 
\begin{equation}\label{230609e2_17}
\det
\begin{bmatrix}
\partial_x\phi, \ \partial_a\phi, \ \partial_b \phi\\
\partial_x\partial_{\theta}\phi, \ \partial_a\partial_{\theta}\phi, \ \partial_b\partial_{\theta} \phi\\
\partial_x\partial^2_{\theta}\phi, \ \partial_a\partial^2_{\theta}\phi, \ \partial_b\partial^2_{\theta} \phi
\end{bmatrix}
\neq 0
\end{equation}
at every point on the support of $\chi$. Then it is proven\footnote{The result proved in \cite{PYZ22} is indeed a lot stronger; for instance, a key point in the proof of \cite{PYZ22} is that only $C^2$ regularity is needed for the curve $\gamma$. } in \cite{PYZ22} that 
for every $\epsilon>0$, it holds that 
\begin{equation}
    \norm{
    \mathfrak{M}_{\delta} f
    }_{L^3(\R)} 
    \lesim_{\epsilon, \phi, \chi} 
    \delta^{-\epsilon}
    \norm{f}_{L^3(\R^2)},
\end{equation}
for every $\delta\in (0, 1)$. \\

To apply the result of \cite{PYZ22}, 
we break the operator $\mc{M}_{\ellipse, \delta}$ into two parts as follows
\begin{equation}\label{230609e2_10}
    \begin{split}
        & 
        \sup_{a, b}
    \frac{1}{\delta}
    \Big|\int_0^{\delta}\int_{-\epsilon_1}^{\epsilon_1} f\Big(x-\theta, 
    y-b\sqrt{
    1-\pnorm{
    \frac{\theta}{a}
    }^2
    }-y'
    \Big)  \chi(\theta; a, b)d\theta dy'\Big|\\
    & + 
    \sup_{a, b}
    \frac{1}{\delta}
    \Big|\int_0^{\delta}\int_{|\theta|\ge \epsilon_1} f\Big(x-\theta, 
    y-b\sqrt{
    1-\pnorm{
    \frac{\theta}{a}
    }^2
    }-y'
    \Big)  \chi(\theta; a, b)d\theta dy'\Big|,
    \end{split}
\end{equation}
where $\epsilon_1$ is a small positive number that will determined later. To control the contribution from the latter term in \eqref{230609e2_10}, we freeze the $x$-variable, and denote 
\begin{equation}
\phi(\theta; y, a, b):=y-b\sqrt{
    1-\pnorm{
    \frac{\theta}{a}
    }^2
    }.
\end{equation}
We directly check the curvature condition given as in \eqref{230609e2_17} for the function $\phi(\theta;y, a, b)$, and see that the determinant is equal to 
\begin{equation}
\frac{2 b \theta^3}{a (a^2 - \theta^2)^3}.
\end{equation}
This computation explains the decomposition in \eqref{230609e2_10}. It therefore remains to control the contribution from the former term. Recall that in the assumption 1) at the beginning of this section, we assumed that $(a, b)$ takes values in a sufficiently small neighborhood of $(1, 1)$. Moreover, we have the freedom of picking $\epsilon_1$ to be sufficiently small. As a result, we can use a simple localization argument, and only need to prove 
\begin{equation}
\Norm{
\sup_{a, b}
    \frac{1}{\delta}
    \Big|\int_0^{\delta}\int_{-\epsilon_1}^{\epsilon_1} f\Big(x-\theta, 
    y-b\sqrt{
    1-\pnorm{
    \frac{\theta}{a}
    }^2
    }-y'
    \Big)  \chi(\theta; a, b)d\theta dy'\Big|
}_{
L^3(B_{100\epsilon_1})
}
\lesim_{\epsilon, \epsilon_1} \delta^{-\epsilon} \norm{f}_{L^3(\R^2)}, 
\end{equation}
for every $\epsilon>1$ and every $\delta\in (0, 1)$. Here $B_{100\epsilon_1}$ is the ball in $\R^2$ of radius $100\epsilon_1$ centered at the origin.  By the change of variables 
\begin{equation}
x\mapsto x, \ \ y\mapsto x+y,
\end{equation}
it suffices to prove that 
\begin{equation}
\Norm{
\sup_{a, b}
    \frac{1}{\delta}
    \Big|\int_0^{\delta}\int_{-\epsilon_1}^{\epsilon_1} f\Big(x-\theta, 
    x-b\sqrt{
    1-\pnorm{
    \frac{\theta}{a}
    }^2
    }-y'
    \Big)  \chi(\theta; a, b)d\theta dy'\Big|
}_{
L_x^3(B_{200\epsilon_1})
}
\lesim_{\epsilon, \epsilon_1} \delta^{-\epsilon} \norm{f}_{L^3(\R^2)}.
\end{equation}
Note that here on the left hand side we are taking an $L^3$ norm in one variable, and $B_{200\epsilon_1}$ is the interval on $\R$ of radius $200\epsilon_1$ centered at the origin. To apply the result in \cite{PYZ22}, the collection of curves we need to consider becomes 
\begin{equation}
\set{\theta\mapsto 
x-
b\sqrt{
1-\pnorm{
\frac{x-\theta}{a}
}^2
}: 
|x|\le 200\epsilon_1, a, b
}.
\end{equation}
Here $(a, b)$ is sufficiently close to $(1, 1)$.  Denote 
\begin{equation}
\phi(\theta; x, a, b):=x-
b\sqrt{
1-\pnorm{
\frac{x-\theta}{a}
}^2
}.
\end{equation} 
By continuity, we only need to check \eqref{230609e2_17} at $x=\theta=0$, $a=b=1$. Via a direct computation, we see that the determinant in \eqref{230609e2_17} equals $-2$ at this point. This finishes the estimate on the first term in \eqref{230609e2_10}.

\subsection{A local smoothing estimate}

The goal of this subsection is to apply Theorem \ref{230329theorem1_1} (indeed its stronger version Proposition \ref{230329prop2_2}), the estimate \eqref{230609e2_6} and a simple interpolation argument to finish the proof of Corollary \ref{230609theorem2_1}. By the triangle inequality, it suffices to prove that 
\begin{equation}\label{230609e2_18}
\norm{
        \mc{M}_{\ellipse} P_k f
        }_{L^p(\R^2)}
        \lesim_{p}
        2^{-\kappa_p k}
        \norm{f}_{L^p(\R^2)},
\end{equation}
for every $p>3$ and some $\kappa_p>0$ that is allowed to depend on $p$.\footnote{Here to simplify notation, we leave out the dependence on $\chi$ and $\epsilon_0$. } Here $P_k f$ is a Littlewood-Paley projection of $f$. First of all, by \eqref{230609e2_6}, we obtain 
\begin{equation}
\norm{
        \mc{M}_{\ellipse} P_k f
        }_{L^3(\R^2)}
        \lesim_{\epsilon}
        2^{\epsilon k}
        \norm{f}_{L^3(\R^2)},
\end{equation}
for every $\epsilon>0$. Therefore by interpolation, it suffices to prove \eqref{230609e2_18} for sufficiently large $p$. Let $\kappa>0$ be a small number that is to be determined. Let $\chi_{2^{-\kappa k}}: \R\to \R$ be an $L^{\infty}$ normalized smooth bump function adapted to the interval $(-2^{-\kappa k}, 2^{-\kappa k})$ such that $1-\chi_{2^{-\kappa k}}$ is supported away from the origin. We split $\mc{M}_{\ellipse}$ into two parts by 
\begin{equation}
\begin{split}
&  \sup_{a, b}
    \Big|\int_{\R} f\Big(x-\theta, 
    y-b\sqrt{
    1-\pnorm{
    \frac{\theta}{a}
    }^2
    }
    \Big) \chi_{2^{-\kappa k}}(\theta) \chi(\theta; a, b)d\theta\Big|\\
    &+ 
     \sup_{a, b}
    \Big|\int_{\R} f\Big(x-\theta, 
    y-b\sqrt{
    1-\pnorm{
    \frac{\theta}{a}
    }^2
    }
    \Big) (1-\chi_{2^{-\kappa k}}(\theta)) \chi(\theta; a, b)d\theta\Big|
\end{split}
\end{equation}
Let us write them as $\mc{M}'_{\ellipse}$ and $\mc{M}''_{\ellipse}$ separately. For the former term, we have the trivial bound 
\begin{equation}
\norm{
\mc{M}'_{\ellipse} f
}_{L^{\infty}}\lesim 2^{-\kappa k}\norm{f}_{L^{\infty}}. 
\end{equation}
To bound the latter term, we will apply Proposition \ref{230329prop2_2}. Via a direct computation, the determinant in \eqref{Y_230330nondegenerate} is equal to 
\begin{equation}
\frac{
6  b^2 \theta 
}{
(a^2 - \theta^2)^{9/2}
}.
\end{equation}
In absolute values, this is $\gtrsim 2^{-\kappa k}$. We therefore apply Proposition \ref{230329prop2_2}\footnote{Here in order to apply Proposition \ref{230329prop2_2} we also need to use the reduction argument in Section \ref{section_normal_form}.}  and obtain that 
\begin{equation}
\norm{
\mc{M}''_{\ellipse} P_k f
}_{L^p}\lesim 2^{
-\frac{k}{p}
+C \kappa k
}\norm{f}_{L^p},
\end{equation}
where $C$ is a large universal constant. Picking $\kappa$ small enough will finish the proof.

\section{Proof of Corollary \ref{231127corollary1_4}}

In this section, we will prove Corollary \ref{231127corollary1_4}. The proof is via a simple limiting argument. 

Assume $\mc{L}^2(E_{\lambda})>0$ for some $\lambda>0$. For every $(x, y)\in E_{\lambda}$, let 
\begin{equation}
    u_2(x, y), \dots, u_d(x, y)
\end{equation}
be functions such that 
\begin{equation}
    \mc{L}^1\pnorm{
    \set{
    |\theta|\le 1: (x+\theta, y+ u_2(x, y)\theta^2+u_3(x, y)\theta^3+\dots+ u_d(x, y)\theta^d)\in E
    }
    }\ge \lambda.
    \end{equation}
Write $E_{\lambda}$ as a disjoint union 
\begin{equation}
    E_{\lambda}=E_{\lambda, 2}\bigcup E'_{\lambda, 2},
\end{equation}
where 
\begin{equation}
    E_{\lambda, 2}:=\{(x, y)\in E_{\lambda}: u_2(x, y)\neq 0\}.
\end{equation}
We discuss two cases separately. The first case is $\mc{L}^2(E_{\lambda, 2})>0$ and the second case is $\mc{L}^2(E_{\lambda, 2})=0$. If we are in the first case, then by a simple limiting argument, we are able to find $C_2>0$ such that 
\begin{equation}
    \mc{L}^2(E_{\lambda, 2, C_2})>0,
\end{equation}
where 
\begin{equation}
    E_{\lambda, 2, C_2}:=\{(x, y)\in E_{\lambda, 2}: |u_2(x, y)|\ge 1/C_2, |u_{d'}|\le C_2, \forall 2\le d'\le d\}.
\end{equation}
By scaling in the $y$ variable, and by Corollary \ref{231126corollary1_3}, we see that $\mc{L}^2(E)>0$. \\

If we are in the second case $\mc{L}^2(E_{\lambda, 2})=0$, then we write 
\begin{equation}
    E_{\lambda}=E_{\lambda, 3}\bigcup E'_{\lambda, 3},
\end{equation}
where 
\begin{equation}
    E_{\lambda, 3}:=\{(x, y)\in E_{\lambda}: u_3(x, y)\neq 0\}.
\end{equation}
We then argue in the same way as above. \\

We continue this process until we reach the last case where 
\begin{equation}
    u_{d'}(x, y)=0
\end{equation}
for every $2\le d'\le d$ and almost every $(x, y)$. In this case, the desired bound $\mc{L}^2(E)>0$ follows from the boundedness of the one-dimensional Hardy-Littlewood maximal operator.

\section{Normal forms}\label{section_normal_form}

\begin{definition}[Normal forms]
For a smooth function $\gamma(\theta; \bfv): \R\times \R^{d-1}\to \R$ satisfying the $(d-1)$-parameter cinematic curvature condition at the origin, we say that it is of a normal form at the origin if it can be written in either one of the two following forms. 
\begin{enumerate}
    \item[(1)] We say that $\gamma(\theta; \bfv)$ is of Form $(I)$ at the origin if 
    \begin{equation}\label{230401e2_1}  \gamma(\theta;\bfv)=\sum_{j=1}^{d'-2}v_j\frac{\theta^{j}}{j!} +Q(\bfv)\frac{\theta^{d'-1}}{(d'-1)!}+(1+v_{d'-1})\frac{\theta^{d'}}{(d')!}+\sum_{j=d'+1}^{d} (\sigma_j+v_{j-1})\frac{\theta^j}{j!}+P(\theta;\bfv)\frac{\theta^{d+1}}{(d+1)!},
\end{equation}
where $d'\in [2,d]$, $\sigma_{j}$ is a fixed real number for each $j$, and 
$Q(\bfv)$ and $P(\theta;\bfv)$ are smooth function with $Q$ satisfying $Q(\bfv)=O(|\bfv|^2)$. 
\item[(2)] We say that $\gamma(\theta; \bfv)$ is of Form $(II)$ at the origin if 
\begin{equation}\label{230401e2_2} \gamma(\theta;\bfv)=\sum_{j=1}^{d-1}v_j\frac{\theta^{j}}{j!} +Q(\bfv)\frac{\theta^{d}}{d!}+(1+L(\bfv))\frac{\theta^{d+1}}{(d+1)!}+P(\theta;\bfv)\frac{\theta^{d+2}}{(d+2)!},
\end{equation}
where $Q(\bfv), L(\bfv), P(\theta; \bfv)$ are smooth functions satisfying $Q(\bfv)=O(|\bfv|^2), L(\bfv)=O(|\bfv|)$. 
\end{enumerate}

\end{definition}
In Form (I), since the coefficient $Q(\bfv)$ of the term $\theta^{d'-1}$ is of order $O(|\bfv|^2)$, which can be thought of as a perturbation term, we call $d'-1$ the ``missing degree" of $\gamma$. The last term in \eqref{230401e2_1} will also be considered as a perturbation term as its order in the $\theta$ variable is high. Analogously, in Form (II) we will call $d$ the missing degree, and the last term in \eqref{230401e2_2} will also be considered as a perturbation term.  \\

\begin{proposition}\label{230329prop2_2}
Let $P_k$ be a Littlewood-Paley projection on $\R^2$.  Under the same assumption as in Theorem \ref{230329theorem1_1} and under the assumption that $\gamma(\theta; \bfv)$ is of a normal form at the origin, we have that 
\begin{equation}\label{230329e2_1}
    \norm{
    \mc{A}_{\gamma} P_kf
    }_{L^p(\R^2\times \R^{d-1})} \lesim_{p, \gamma, \epsilon} 2^{
    -\frac{dk}{p}+\epsilon k
    }\norm{f}_{L^p(\R^2)},
\end{equation}
holds for every $p>p_d, \epsilon>0$ and $k\ge 1$. 
\end{proposition}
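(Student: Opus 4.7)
The plan is to prove \eqref{230329e2_1} by a Fourier-side decomposition of $\mathcal{A}_\gamma P_k f$ combined with a decoupling inequality adapted to the cinematic structure, in the framework of \cite{GGW22}. The operator in Fourier form reads
\[
\mathcal{A}_\gamma P_k f(x,y;\bfv) = \int \hat f(\xi,\eta)\,\psi_k(\xi,\eta)\,m_\gamma(\xi,\eta;\bfv)\,e^{2\pi i(x\xi+y\eta)}\,d\xi\,d\eta,
\]
with oscillatory multiplier
\[
m_\gamma(\xi,\eta;\bfv):=\int e^{-2\pi i(\theta\xi+\gamma(\theta;\bfv)\eta)}\chi(\theta;\bfv)\,d\theta
\]
at frequency scale $2^k$. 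The cinematic curvature condition \eqref{Y_230330nondegenerate} ensures that the associated characteristic set in the joint $(\xi,\eta,\bfv)$-variables is a non-degenerate $d$-dimensional manifold in $\mathbb{R}^{d+1}$, which is the object against which one decouples.

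First I would dyadically localize in $|\theta|$: since $\gamma(0;\bfv)=\partial_\theta\gamma(0;\bfv)=0$ in the normal form, the phase behaves differently on each shell $|\theta|\sim 2^{-j}$. On the innermost shells, where the linear term $\theta\xi$ dominates the whole contribution from $\gamma(\theta;\bfv)\eta$, integration by parts in $\theta$ yields a negligible contribution. For each outer dyadic shell I rescale $\theta\mapsto 2^{-j}\theta$ and simultaneously rescale the $\bfv$-coordinates via the weights dictated by the missing-degree structure in \eqref{230401e2_1} or \eqref{230401e2_2}, normalizing the leading polynomial of $\gamma$ to unit scale while the perturbation terms $Q(\bfv)\theta^{d'-1}$, $P(\theta;\bfv)\theta^{d+1}$ (and $L(\bfv)\theta^{d+1}$ in Form (II)) remain strictly lower order.

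On each rescaled piece I would apply an $\ell^p(L^p)$-decoupling inequality for the $d$-dimensional non-degenerate cinematic manifold, extractable from the techniques of \cite{BGHS21,GGW22,KLO23}. Taking $p>p_d$ above the critical decoupling exponent incurs only an $\epsilon$-loss $2^{\epsilon k}$ and reduces the estimate to a sum over single-scale cells. Each single-scale piece is then controlled by Plancherel in the lifted variables $(x,y,\bfv)$: the pointwise bound $|m_\gamma|\lesssim 2^{-k/2}$ from stationary phase in $\theta$, paired with the $(d-1)$-parameter integration over $\bfv$ whose net $L^2$-gain of $2^{-(d-1)k/2}$ stems from \eqref{Y_230330nondegenerate} applied in the $\bfv$-Fourier domain, yields a single-scale $L^2\to L^2(\mathbb{R}^2\times\mathbb{R}^{d-1})$ gain of order $2^{-dk/2}$. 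Real interpolation with the trivial $L^\infty$ bound produces the advertised decay $2^{-dk/p+\epsilon k}$.

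The main obstacle will be controlling the perturbation terms in the normal form uniformly across the $\theta$- and $\bfv$-scales: the coupling term $Q(\bfv)\theta^{d'-1}$ and the tail $P(\theta;\bfv)\theta^{d+1}$ become negligible only after an additional pigeonholing over $|\bfv|$-annuli, which must be carried out before the rescaling can be applied uniformly. A secondary technicality is ensuring that the decoupling constant is uniform in $\bfv\in\mathrm{supp}(\chi)$; this is achievable by shrinking $\mathrm{supp}(\chi)$ so that the cinematic determinant in \eqref{Y_230330nondegenerate} is bounded below on it, which is precisely the freedom granted in the hypothesis.
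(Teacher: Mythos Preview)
Your sketch has the right high-level ingredients, but two of the steps would fail as written.

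First, the premise for the dyadic localization in $|\theta|$ is wrong: in both normal forms \eqref{230401e2_1} and \eqref{230401e2_2} the coefficient of $\theta$ is $v_1$, so $\partial_\theta\gamma(0;\bfv)=v_1$, not $0$. More seriously, a dyadic decomposition in $|\theta|$ alone does not capture the structure of the phase $\Phi(\theta;\bfv;\bxi)=\theta\xi+\gamma(\theta;\bfv)\eta$: its stationary points move with both $\xi/\eta$ and $\bfv$, and in large regions of the parameter space the phase has a higher-order degeneracy, so $|m_\gamma|$ is only $O(2^{-k/d'})$ for some $d'>2$, not $O(2^{-k/2})$. The paper handles this by an iterative algorithm: at each step one finds the smallest $d_1$ with $|\partial_\theta^{d_1}\Phi|\gtrsim 1$, localizes $\theta$ near the unique zero $\theta_{d_1-1}(\bfv)$ of $\partial_\theta^{d_1-1}\Phi$, introduces a scale parameter $s_{d_1}$ measuring how small the remaining lower derivatives are there, and then makes a nonlinear change of the $\bfv$-variables (Claim~\ref{221124claim2_4}) whose Jacobian is controlled precisely by the cinematic determinant \eqref{Y_230330nondegenerate}. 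This strictly lowers the derivative degree and is repeated until $\mathfrak d_J=2$; only then is stationary phase applicable uniformly.

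Second, the claimed single-scale $L^2$ gain of $2^{-dk/2}$ does not follow from the reasoning you give. Plancherel in $(x,y)$ leaves $\int_{\R^{d-1}}\int|\hat f|^2\,|m_\gamma(\bxi;\bfv)|^2\,d\bxi\,d\bfv$; there is no oscillation remaining in $\bfv$, so \eqref{Y_230330nondegenerate} cannot produce an extra factor $2^{-(d-1)k/2}$ here. In the paper the exponent $2^{-dk/p}$ arises instead from two sources: the $\bfv$-measure gains $\prod_j\mathfrak s_j^{(\mathfrak m_j+2)(\mathfrak m_j-1)/(2p)}$ coming from the rescalings in the algorithm, and an iterated moment-curve decoupling (Lemma~\ref{221126lemma3_2}) applied, after stationary phase, to $\Psi(\bfv;\bxi)=\Phi(\theta_1(\bfv;\bxi);\bfv;\bxi)$. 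The curvature condition required for that decoupling (Claim~\ref{221126claim3_1}) is again equivalent to \eqref{Y_230330nondegenerate}, but it is a decoupling for the curve $\xi\mapsto(\xi,\xi^2,\dots,\xi^d)$ applied iteratively in descending degree, not a single decoupling for an unspecified ``$d$-dimensional cinematic manifold''.
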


\begin{proof}[Proof of Theorem \ref{230329theorem1_1} by assuming Proposition \ref{230329prop2_2}.]
We first prove below that \eqref{230329e2_1} holds for all non-degenerate $\gamma(\theta;\bfv)$ assuming it holds for all $\gamma$ in normal form. Then by standard Sobolev embedding, this implies the desired maximal operator bound as in Theorem \ref{230329theorem1_1}. In the rest of this section we carry out the reduction from a general non-degenerate $\gamma(\theta; \bfv)$ to a normal form. \\

Since $\gamma(\theta;\bfv)$ is smooth, we may write
\begin{equation}    \gamma(\theta;\bfv)=\gamma_0(\bfv)+\sum_{j=1}^{d+1} \left(\sigma_j+
L_j(\bfv)
\right)\frac{\theta^j}{j!}+P(\theta;\bfv)\frac{\theta^{d+2}}{(d+2)!}+Q(\theta;\bfv)\theta,
\end{equation}
where $\sigma_j\in \R$, $L_j(\bfv)$ is a linear form  in $\bfv$ for each $j$, 
$\gamma_0(\bfv)$, $P(\theta;\bfv)$, $Q(\theta;\bfv)$ are smooth functions with $Q$ satisfying $Q(\theta; \bfv)=O(|\bfv|^2)$. We may assume $\gamma_0(\bfv)\equiv 0$. Indeed, when integrating $\mathcal A_\gamma f$ over $\R^2\times \R^{d-1}$, we may change variables
\begin{equation}
    y-\gamma_0(\bfv)\mapsto y.
\end{equation}
We may also assume $\partial_\theta\gamma(0;0)=0$, that is, $\sigma_1=0$, and this can be achieved via the shear transformation
\begin{equation}
    y-\sigma_1 x\mapsto y.
\end{equation}
Thus we arrive at
\begin{equation}    \gamma(\theta;\bfv)=L_1(\bfv)\theta+\sum_{j=2}^{d+1} \left(\sigma_j+L_j(\bfv)\right)\frac{\theta^j}{j!}+P(\theta;\bfv)\frac{\theta^{d+2}}{(d+2)!}+Q(\theta;\bfv)\theta.
\end{equation}
Note that we have not used the non-degeneracy condition \eqref{Y_230330nondegenerate} so far. \\

Now we consider the first column of the matrix in \eqref{Y_230330nondegenerate}. There must be a smallest $d'\in [2,d+1]$ such that $\partial_\theta^{d'}\gamma(0;0)\ne 0$. That means $\partial_\theta^{j}\gamma(0;0)=\sigma_j= 0$ for every $2\le j\le d'-1$. We consider the cases $d'=2, d+1$ and the cases $2<d'<d+1$ separately. \\

Let us first consider the case $d+1> d'>2$. By linear changes of variables in $\bfv$ and the non-degeneracy assumption on $\gamma$, we can assume that 
\begin{equation}
    L_j(\bfv)=v_j,\ \  \forall 1\le j\le d'-2. 
\end{equation}
The function $\gamma(\theta; \bfv)$ becomes 
\begin{equation}    \gamma(\theta;\bfv)=
\sum_{j=1}^{d'-2} v_j \frac{\theta^j}{j!}+
\sum_{j=d'-1}^{d+1} \left(\sigma_j+L_j(\bfv)\right)\frac{\theta^j}{j!}+P(\theta;\bfv)\frac{\theta^{d+2}}{(d+2)!}+Q(\theta;\bfv)\theta,
\end{equation}
and we can without loss of generality assume that $\sigma_{d'}=1$. Recall that $\sigma_{d'-1}=0$ by assumption. Next, we make the change of variables 
\begin{equation}\label{230401e2_10}
    cL_{d'-1}(\bfv)+\theta\mapsto \theta,
\end{equation}
for some appropriately chosen constant $c$, and we can get rid of the linear form $L_{d'-1}(\bfv)$. The function $\gamma(\theta; \bfv)$ becomes 
\begin{equation}    \gamma(\theta;\bfv)=
\sum_{j=1}^{d'-2} v_j \frac{\theta^j}{j!}+
\sum_{j=d'}^{d+1} \left(\sigma_j+L_j(\bfv)\right)\frac{\theta^j}{j!}+P(\theta;\bfv)\frac{\theta^{d+2}}{(d+2)!}+Q(\theta;\bfv)\theta,
\end{equation}
with $\sigma_{d'}=1$. Now we use the non-degeneracy assumption on $\gamma$ again, and see that by linear change of variables in $\bfv$, we can assume that 
\begin{equation}
    L_j(\bfv)=v_{j-1}, \ \ \forall d\ge j\ge d'. 
\end{equation}
To see that $\gamma$ can be further reduced to the desired form \eqref{230401e2_1}, we just apply non-linear changes of variables in $\bfv$. This finishes the argument for the case $2<d'<d+1$. \\

Next we consider the case $d'=d+1$ and the case $d'=2$. They are similar, and we only consider $d'=d+1$. Note that by assumption $\sigma_j=0$ for every $j\ge d$. Therefore by the assumption that $\gamma$ is non-degenerate, we can make linear changes of variables in $\bfv$ so that 
\begin{equation}
    L_j(\bfv)=v_j, \ \ \forall j\le d-1. 
\end{equation}
We have arrived at 
\begin{equation}
    \gamma(\theta; \bfv)=\sum_{j=1}^{d-1} 
    v_j \frac{\theta^j}{j!}
    + 
    L_d(\bfv) \frac{\theta^d}{d!}
    +
    (1+L_{d+1}(\bfv))\frac{\theta^{d+1}}{(d+1)!} 
    +P(\theta;\bfv)\frac{\theta^{d+2}}{(d+2)!}+Q(\theta;\bfv)\theta.
\end{equation}
By a change of variable similar to \eqref{230401e2_10}, we can assume that $L_d(\bfv)\equiv 0$. This finishes the reduction for the case $d'=d+1$.

\end{proof}

\section{Reduction algorithm}

We need to prove Proposition \ref{230329prop2_2}. Let us assume that the missing degree of $\gamma(\theta; \bfv)$ is $d_0-1$ for some $d_0\in [2, d+1]$. 
To simplify notation, we write 
\begin{equation}
   \bfu=(u_1, \dots, u_d)\in \R^d,  \ \ \circu_{d'}=(u_1, \dots, u_{d'-1}, u_{d'+1}, \dots u_d)\in \R^{d-1},
\end{equation}
for $d'=1, \dots, d$. We will write $\gamma$ of Form (I) and missing degree $d_0-1$ as 
\begin{equation}
    \gamma(\theta; \bfu_{d_0-1})=\sum_{j=1, \dots, d, j\neq d_0-1} 
    (\sigma_j+u_j) \frac{\theta^j}{j!} 
    + Q(\bfu_{d_0-1}) \frac{\theta^{d_0-1}}{(d_0-1)!}+ P(\theta; \bfu_{d_0-1}) \frac{\theta^{d+1}}{(d+1)!},
\end{equation}
where $\sigma_j=0$ for $j=1, \dots, d_0-2$ and $\sigma_{d_0}=1$. Similarly, we will write $\gamma$ of Form (II) and missing degree $d$ as
\begin{equation}
    \gamma(\theta; \bfu_{d})=\sum_{j=1, \dots, d-1} 
    u_j \frac{\theta^j}{j!} 
    + (1+L(\bfu_d)) \frac{\theta^{d+1}}{(d+1)!}+ Q(\bfu_{d}) \frac{\theta^{d}}{d!}+ P(\theta; \bfu_{d}) \frac{\theta^{d+2}}{(d+2)!}.
\end{equation}
By taking Fourier transforms and scaling in frequency variables, what we need to prove becomes 
\begin{equation}\label{230401e3_4}
    \Norm{
    \iint_{\R^2} 
    \widehat{f}(\bxi)
    m(\bfu_{d_0-1}; \bxi) 
    e^{i\bfx\cdot \bxi} d\bxi
    }_{L^p (\R^2\times \R^{d-1})} \lesim_{\epsilon} 2^{-\frac{dk}{p}} 2^{\epsilon k}\norm{f}_{L^p(\R^2)},
\end{equation}
for every $\epsilon>0$, every $f$ with $\supp(\widehat{f})\subset \{(\xi, \eta): |\eta|\simeq 1\}$, where
\begin{equation}
    \bfx=(x, y), \ \ \bxi=(\xi, \eta), 
\end{equation}
the multiplier $m$ is given by 
\begin{equation}
    m(\bfu_{d_0-1}; \bxi):=\int_{\R} 
    e^{i2^k 
    \Phi(\theta; \bfu_{d_0-1}; \bxi)
    }
    \chi(\theta; \bfu_{d_0-1})d\theta,
\end{equation}
and the phase function $\Phi$ is given by 
\begin{equation}
    \Phi(\theta; \bfu_{d_0-1}; \bxi):=\theta \xi+\gamma(\theta; \bfu_{d_0-1})\eta.
\end{equation}
To prove \eqref{230401e3_4}, we will run an algorithm to reduce it to a sum of terms that are easier to handle. In this section, we will describe the algorithm, and in the next section, we will handle all the resulting terms. \\

The inputs of the algorithm: The phase function $\Phi(\theta; \circu_{d_0-1}; \bxi)$, a smooth amplitude function 
\begin{equation}
    \mathfrak{a}^{(0)}(\theta; \circu_{d_0-1}; \bxi):=
    \chi(\theta; \bfu_{d_0-1}) a^{(0)}_3(\bxi),
\end{equation} 
where  $a^{(0)}_3(\bxi)$ is supported on $|\eta|\simeq 1, |\xi|\lesim 1$. Under the new notations, the desired estimate \eqref{230401e3_4} can be written as 
\begin{equation}\label{230209e2_8}
    \Norm{
\iint_{\R^2} \widehat{f}(\bxi)
\Big[
\int_{\R} e^{i2^k \Phi(\theta; \circu_{d_0-1}; \bxi)} \mathfrak{a}^{(0)}(\theta; \circu_{d_0-1}; \bxi)d\theta
\Big] e^{i\bfx\cdot \bxi} d\bxi 
}_{L^p} \lesim  2^{-\frac{d \cdot k}{p}} 2^{\epsilon k} \norm{f}_p,
\end{equation}
Define 
\begin{equation}
    \mathfrak{k}_0:=k, \ \ \mathfrak{m}_0:=d_0-1.
\end{equation}
We will call $\mathfrak{m}_0$ the missing degree of the phase function $\Phi$.

\subsection{The first step of the algorithm}\label{230210sub2_1}

Let us describe the first iteration of the algorithm. We cut the $\bfu_{\mathfrak{m}_0}$-support of $\mathfrak{a}^{(0)}$ into $O_d(1)$ many pieces $\{\supp(a_{2, d_{1}})\}_{d_{1}}$, expressed by 
\begin{equation}\label{230122e_18}
    \sum_{2\le d_{1}\le d_0} a_{2, d_{1}}(\circu_{\mathfrak{m}_0})
    \mathfrak{a}^{(0)}(\theta; \bfu_{\mathfrak{m}_0}; \bxi)\equiv \mathfrak{a}^{(0)}(\theta; \bfu_{\mathfrak{m}_0}; \bxi)
\end{equation}
such that on each piece either it holds that
\begin{equation}\label{230114e3_15}
    \anorm{
    \frac{\partial^{d_{1}}}{\partial \theta^{d_{1}}}\Phi(\theta; \circu_{\mathfrak{m}_0}; \bxi)
    }\ge c_{d_{1}, 1}>0, \,\,\quad \forall \theta\in \supp(\mathfrak{a}^{(0)}),
\end{equation}
and 
\begin{equation}\label{221123e2_14}
    \frac{\partial^{d_{1}-1}}{\partial \theta^{d_{1}-1}}\Phi(\theta; \circu_{\mathfrak{m}_0}; \bxi)=0
\end{equation}
admits a unique solution on the interval $2\supp_1(\mathfrak{a}^{(0)})$, for some $2<d_{1}\le d_0$ and some $c_{d_{1}, 1}>0$, or \eqref{230114e3_15} holds for $d_{1}=2$.

Let us compare the contributions from these terms. As we have a sum of $O_d(1)$ terms, we only need to handle the terms that gives the biggest contribution. Assume that it is the term that corresponds to $d_1$ that gives the biggest contribution. \\

If $d_{1}=2$ that contributes most, then we just define 
\begin{equation}\label{230210e2_15z}
    \mathfrak{k}_1:=k, \ \ 
    \mathfrak{m}_1:=d_0-1, \ \ \mathfrak{d}_1:=d_1, \ \ \mathfrak{s}_1:=1, \ \ \Phi_{\mathfrak{s}_1}(\theta; \circu_{\mathfrak{m}_1}; \bxi):=\Phi(\theta; \circu_{d_0-1}; \bxi). 
\end{equation}
Here $\mathfrak{m}_1$ refers to the missing degree of the phase function $\Phi_{\mathfrak{s}_1}$, $\mathfrak{d}_1$ will be called the derivative degree of $\Phi_{\mathfrak{s}_1}$ as the $\mathfrak{d}_1$-th derivative of it is bounded from below, and $\mathfrak{s}_1$ is a scale parameter whose meaning will become clear later. Moreover, define 
\begin{equation}\label{230210e2_13}
    \mathfrak{a}^{(1)}(\theta; \circu_{\mathfrak{m}_1}; \bxi):=
    a_{2, d_1}(\circu_{\mathfrak{m}_0})
    \mathfrak{a}^{(0)}(\theta; \circu_{\mathfrak{m}_0}; \bxi).
\end{equation}
We will not  process the phase function $\Phi_{\mathfrak{s}_1}(\theta; \circu_{\mathfrak{m}_1}; \bxi)$, and will directly prove \eqref{230209e2_8} with $\mathfrak{a}^{(0)}$ replaced by $\mathfrak{a}^{(1)}$. Our algorithm terminates. \\

Assume $d_{1}>2$. Let $\theta_{d_{1}-1}=\theta_{d_{1}-1}(\circu_{
\mathfrak{m}_0
})$ be the unique solution in \eqref{221123e2_14}.  Let $c_{d_{1}, 2}$ be a small constant satisfying 
\begin{equation}\label{230122e3_21}
    c_{d_{1}, 2}\ll c_{d_{1}, 1}. 
\end{equation}
We cut the interval $2\supp_1(\mathfrak{a}^{(0)})$ into $O_d(1)$ many small intervals $I_{d_{1}}$ of length $c_{d_{1}, 2}$. \footnote{This simple step turns out to be quite fundamental; it will be used to get rid of ``global" zeros of the first order derivative of the phase function, and capture only the ``local" zeros. } We write this step of cutting as 
\begin{equation}
    \sum_{I_{d_{1}}} a_{1, I_{d_{1}}}(\theta)=1, \ \ \forall \theta,
\end{equation}
and each $a_{1, I_{d_{1}}}$ is supported on $2I_{d_{1}}$. Moreover, we will cut $\supp(a_{2, d_{1}})$ into $O_d(1)$ pieces, expressed by 
\begin{equation}
    a_{2, d_{1}}(\circu_{\mathfrak{m}_0})=\sum_{ I_{d_{1}}} a_{2, d_{1}, I_{d_{1}}}(\circu_{\mathfrak{m}_0}),
\end{equation}
so that for each $\circu_{\mathfrak{m}_0}\in \supp(a_{2, d_{1},  I_{d_{1}}})$, it holds that the equation \eqref{221123e2_14} admits a unique solution on $2 I_{d_{1}}$. So far we have 
\begin{equation}
    \pnorm{
    \sum_{|I_{d_{1}}|=c_{d_{1}, 2}} a_{1, I_{d_{1}}}(\theta)
    }
    \pnorm{
    \sum_{|\widetilde{I}_{d_{1}}|=c_{d_{1}, 2}} 
    a_{2, d_{1}, \widetilde{I}_{d_{1}}}(\circu_{\mathfrak{m}_0})
    }=1, \ \ \forall \theta, \ \circu_{\mathfrak{m}_0}\in \supp(a_{2, d_{1}}). 
\end{equation}
Note that on the support of $a_{1, I_{d_{1}}}(\theta) a_{2, d_{1}, \widetilde{I}_{d_{1}}
}(\circu_{\mathfrak{m}_0})$ with $\dist(I_{d_{1}}, 
\widetilde{I}_{d_{1}}
)\ge c_{d_{1}, 2}$, it always holds that
\begin{equation}
    \anorm{
    \frac{
    \partial^{d_{1}-1}
    }{
    \partial \theta^{d_{1}-1}
    }
    \Phi(\theta; \circu_{\mathfrak{m}_0}; \bxi)
    }\gtrsim 1. 
\end{equation}
For these terms, we can proceed exactly in the same way as in \eqref{230122e_18}--\eqref{221123e2_14}; we leave out the details. \\

It remains to handle the case where $I_{d_{1}}$ and $\widetilde{I}_{d_{1}}$ are either equal or adjacent. We without loss of generality assume that they are equal. In other words, from now on, we are only concerned with $(\theta; \circu_{\mathfrak{m}_0})$ that lies in the support of 
\begin{equation}
    a_{1, I_{d_{1}}}(\theta) a_{2, d_{1}, I_{d_{1}}
}(\circu_{\mathfrak{m}_0}).
\end{equation}
Denote 
\begin{equation}\label{230115e3_17}
    \Psi_{d_{1}-1, \iota}(\circu_{\mathfrak{m}_0}):=
    \Big(
    \frac{\partial^{\iota}}{\partial \theta^{\iota}}
    \Phi
    \Big)
    (\theta_{d_{1}-1}(\circu_{\mathfrak{m}_0}); \circu_{\mathfrak{m}_0}; (\xi, 1)),
\end{equation}
for $2\le \iota\le d+1$, and 
\begin{equation}
    \Psi_{d_{1}-1, 1}(\circu_{\mathfrak{m}_0}):=\frac{\partial \Phi}{\partial \theta}(\theta_{d_{1}-1}(\circu_{\mathfrak{m}_0}); \circu_{\mathfrak{m}_0}; (\xi, 1))-\xi. 
\end{equation}
Let $a: \R\to \R$ be a smooth compactly supported function, supported away from the origin; let $a_0: \R\to \R$ be a similar function but supported around the origin. Let
\begin{equation}\label{221124e2_17}
    s_{d_{1}, 0}:=2^{-\mathfrak{k}_0/d_{1}}.
\end{equation}
We without loss of generality assume that $s_{d_{1}, 0}$ is a dyadic number. For dyadic numbers $s_{d_{1}}> s_{d_{1}, 0}$, define
\begin{equation}\label{230123e4_34_Yang}
    \widetilde{\mathfrak{a}}_{d_{1}, s_{d_{1}}}(\circu_{\mathfrak{m}_0}; \bxi):=a_{2, d_{1}, I_{d_{1}}
}(\circu_{\mathfrak{m}_0})
    a\pnorm{
    \anorm{
    \frac{\Psi_{d_{1}-1, d_{1}-2}(\circu_{\mathfrak{m}_0})}{(s_{d_{1}})^2}
    }^2+\dots
    +
    \anorm{
    \frac{\Psi_{d_{1}-1, 1}(\circu_{\mathfrak{m}_0})+\xi'}{(s_{d_{1}})^{d_{1}-1}}
    }^2
    }, \ \ \xi':=\xi/\eta,
\end{equation}
and
\begin{equation}\label{221124e2_18}
    \mathfrak{a}_{d_{1}, s_{d_{1}}}(\theta; \circu_{\mathfrak{m}_0}; \bxi):=
    a_{1, I_{d_{1}}}(\theta)
    \widetilde{\mathfrak{a}}_{d_{1}, s_{d_{1}}}(\circu_{\mathfrak{m}_0}; \bxi)\mathfrak{a}^{(0)}(\theta; \circu_{\mathfrak{m}_0}; \bxi).
\end{equation}
For $s_{d_{1}}=s_{d_{1}, 0}$, define $\mathfrak{a}_{d_{1}, s_{d_{1}}}$ similarly to \eqref{221124e2_18} but with $a_0$ in place of $a$. After the decomposition in \eqref{230123e4_34_Yang}, the multiplier we are concerned with can be written as \begin{equation}\label{230209e2_25}
    \sum_{s_{d_1}\ge s_{d_1, 0}}
    \int_{\R} e^{i2^{\mathfrak{k}_0} \Phi(\theta; \circu_{\mathfrak{m}_0}; \bxi)} \mathfrak{a}_{d_1, s_{d_1}}(\theta; \circu_{\mathfrak{m}_0}; \bxi)d\theta.
\end{equation}
The choice of the scale $s_{d_{1}, 0}$ will become clear in \eqref{221124e2_38}. As we have $O(k)$ many terms in the sum over $s_{d_1}$, and we are allowed to lose $2^{\epsilon k}$ in the desired estimate, we therefore only need to consider the $s_{d_1}$ term that gives the biggest contribution. \\

Let $c_{d_{1}, 3}$ be a small constant satisfying 
\begin{equation}\label{230122e3_32}
    c_{d_{1}, 3}\ll c_{d_{1}, 2}. 
\end{equation}
It turns out that the case $s_{d_{1}}\ge c_{d_{1}, 3}$ and $s_{d_{1}}\le c_{d_{1}, 3}$ should be handled differently. Let us start with the former case. In this case, we should not think we are in the case where 
\begin{equation}
    \Big|\frac{\partial^{d_{1}}}{\partial \theta^{d_{1}}} \Phi(\theta; \circu_{\mathfrak{m}_0}; \bxi)\Big|\gtrsim 1,
\end{equation}
but rather 
\begin{equation}\label{230118e3_26}
    \Big|\frac{\partial^{d_{1}'}}{\partial \theta^{d_{1}'}} \Phi(\theta; \circu_{\mathfrak{m}_0}; \bxi)\Big|\gtrsim 1,
\end{equation}
for some $d_{1}'< d_{1}$. Let us be more precise. The assumption that $s_{d_{1}}\simeq 1$ says that when the $(d_{1}-1)$-th derivative of the phase function in $\theta$ vanishes, there exists $d_{1}'<d_{1}-1$ such that 
\begin{equation}
    \Big|\frac{\partial^{d_{1}'}}{\partial \theta^{d_{1}'}} \Phi(\theta; \circu_{\mathfrak{m}_0}; \bxi)\Big|\gtrsim 1,\quad \forall |\theta-\theta_{d_{1}-1}|\ll 1.
\end{equation}
This allows us to cut the range of $\theta$ into small intervals (of length still comparable to 1) such that on each of these small intervals we have \eqref{230118e3_26} for some $d_{1}'<d_{1}$. 
In each of these cases, we argue in exactly the same way as in \eqref{230114e3_15}, \eqref{221123e2_14} and the paragraph below them. The details are left out. \\

From now on, we assume that $s_{d_{1}}\le c_{d_{1}, 3}$. Now we are ready to define 
\begin{equation}\label{230210e2_32}
     \mathfrak{d}_1:=d_1. 
\end{equation}
We define $\mathfrak{d}_1$ here but not earlier as there are cases where our phase function may satisfy better derivative bounds as in \eqref{230118e3_26}.

\begin{claim}\label{221031claim3_1}
Let $(\theta; \circu_{\mathfrak{m}_0}; \bxi)\in \supp(\mathfrak{a}_{d_{1}, s_{d_{1}}})$. Then 
\begin{equation}
    \anorm{
    \frac{
    \partial
    }{\partial \theta}
    \Phi(\theta; \circu_{\mathfrak{m}_0}; \bxi)
    }\gtrsim |\theta-\theta_{d_{1}-1}|^{d_{1}-1},
\end{equation}
whenever 
\begin{equation}
    |\theta-\theta_{d_{1}-1}|\ge (100d!)c_{d_1,1}^{-1} s_{d_{1}}.
\end{equation}
\end{claim}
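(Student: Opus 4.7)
The plan is to Taylor expand $\partial_\theta\Phi(\theta;\circu_{\mathfrak{m}_0};\bxi)$ around the critical point $\theta_{d_1-1}(\circu_{\mathfrak{m}_0})$ and to combine three ingredients: (i) the defining equation \eqref{221123e2_14} forcing $\partial_\theta^{d_1-1}\Phi(\theta_{d_1-1})=0$; (ii) the lower bound $|\partial_\theta^{d_1}\Phi|\ge c_{d_1,1}$ from \eqref{230114e3_15}; and (iii) the support conditions on the $\Psi_{d_1-1,\iota}$'s imposed by the cutoff $\widetilde{\mathfrak{a}}_{d_1,s_{d_1}}$ defined in \eqref{230123e4_34_Yang}.

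The first step is to write Taylor's theorem with Lagrange remainder at order $d_1-1$:
$$
\partial_\theta\Phi(\theta;\circu;\bxi)=\sum_{j=0}^{d_1-2}\frac{(\theta-\theta_{d_1-1})^j}{j!}\,\partial_\theta^{j+1}\Phi(\theta_{d_1-1};\circu;\bxi)+\frac{(\theta-\theta_{d_1-1})^{d_1-1}}{(d_1-1)!}\,\partial_\theta^{d_1}\Phi(\theta^*;\circu;\bxi),
$$
for some $\theta^*$ between $\theta$ and $\theta_{d_1-1}$. Because $\Phi=\theta\xi+\gamma\eta$, for $d_1\ge 3$ we have $\partial_\theta^{j+1}\Phi(\theta_{d_1-1})=\eta\,\Psi_{d_1-1,j+1}(\circu)$ when $j\ge 1$, while the $j=0$ coefficient equals $\eta(\Psi_{d_1-1,1}+\xi')$. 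The $j=d_1-2$ term vanishes outright by \eqref{221123e2_14}. The remainder has absolute value $\ge c_{d_1,1}(d_1-1)!^{-1}|\theta-\theta_{d_1-1}|^{d_1-1}$ by \eqref{230114e3_15}.

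Next, I would invoke the support conditions from \eqref{230123e4_34_Yang}, which give $|\Psi_{d_1-1,j+1}|\lesssim s_{d_1}^{d_1-1-j}$ for $1\le j\le d_1-3$ and $|\Psi_{d_1-1,1}+\xi'|\lesssim s_{d_1}^{d_1-1}$. Setting $r:=|\theta-\theta_{d_1-1}|/s_{d_1}$, the cumulative size of the non-remainder terms is then bounded by $|\eta|\,s_{d_1}^{d_1-1}\sum_{j=0}^{d_1-3}r^j/j!$, and for $r\ge 1$ this is $\lesssim_d |\eta|\,s_{d_1}^{d_1-1}r^{d_1-3}$. Dividing by the lower bound on the remainder yields a ratio of order $|\eta|(d_1-1)!\,c_{d_1,1}^{-1}\,r^{-2}$. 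Since $|\eta|\lesssim 1$ on the support, the assumption $|\theta-\theta_{d_1-1}|\ge 100\,d!\,c_{d_1,1}^{-1}s_{d_1}$ forces $r^{-2}$ to be small enough that the remainder dominates by a factor of at least $2$, yielding $|\partial_\theta\Phi|\ge \frac{c_{d_1,1}}{2(d_1-1)!}\,|\theta-\theta_{d_1-1}|^{d_1-1}$, which is the claimed lower bound.

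No serious obstacle is anticipated; the argument is essentially bookkeeping of Taylor coefficients. The only conceptual point worth flagging is that the cutoff \eqref{230123e4_34_Yang} is engineered precisely so that \emph{every} Taylor coefficient of $\partial_\theta\Phi$ at $\theta_{d_1-1}$ of order below $d_1-1$ is constrained to be small by the appropriate power of $s_{d_1}$, with the sole exception of the $(d_1-2)$-th coefficient; and that single coefficient is exactly the one that vanishes automatically by the defining equation for $\theta_{d_1-1}$.
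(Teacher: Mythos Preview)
Your argument is correct and follows the same approach as the paper: Taylor-expand $\partial_\theta\Phi$ about $\theta_{d_1-1}$, use \eqref{221123e2_14} to kill the $(d_1-2)$-th coefficient, use the support conditions in \eqref{230123e4_34_Yang} to bound the lower-order coefficients, and use \eqref{230114e3_15} to make the top-order term dominate. The only cosmetic difference is that the paper expands one order further (producing a leading coefficient $w_{d_1-1}$ evaluated at $\theta_{d_1-1}$ plus an $O(|\Delta\theta|^{d_1})$ error controlled via $c_{d_1,2}\ll c_{d_1,1}$), whereas you stop with a Lagrange remainder $\tfrac{(\Delta\theta)^{d_1-1}}{(d_1-1)!}\partial_\theta^{d_1}\Phi(\theta^*)$; for the latter you should note that \eqref{230114e3_15} is literally stated only for $\theta\in\supp(\mathfrak{a}^{(0)})$ while $\theta^*$ may lie in $2I_{d_1}$, but this is harmless by continuity.
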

\begin{proof}[Proof of Claim \ref{221031claim3_1}]
If we do a Taylor expansion for $\frac{\partial \Phi}{\partial \theta}$ about the point $\theta_{d_{1}-1}$, then it  looks like 
\begin{equation}\label{230122e3_38}
    w_{d_{1}-1}
    (\Delta\theta)^{d_{1}-1}+
    \dots+w_1 (\Delta\theta)+w_0+O(|\Delta\theta|^{d_1}),
\end{equation}
where $\Delta\theta=\theta-\theta_{d_{1}-1}$, and 
\begin{equation}
     |w_{d_{1}-1}|\ge \frac{c_{d_{1}, 1}}{(d_1-1)!}, \ \  w_{d_{1}-2}=0, \ \ 
    |w_{d_{1}-3}|\leq \frac{(s_{d_{1}})^2}{(d_1-3)!}, |w_{d_{1}-4}|\leq \frac{(s_{d_{1}})^3}{(d_1-4)!}, \dots, |w_0|\leq (s_{d_{1}})^{d_{1}-1}. 
\end{equation}
Recall the choices of constants in \eqref{230122e3_21} and \eqref{230122e3_32}. 
It is elementary to see that we have the desired properties if the $\ll$ there were chosen to be small enough compared to the implicit constant in the last term of \eqref{230122e3_38}. 
\end{proof}

Claim \ref{221031claim3_1} suggests that we further truncate the  integral in the $\theta$ variable. Let us be more precise. Let $\epsilon_{d_{1}}:=\epsilon (10d_{1})^{-1}$. 
Let $\varphi^+_{s_{d_{1}}}: \R\to \R$ be an $L^{\infty}$-normalized smooth bump function adapted to the interval
\begin{equation}\label{221124e2_35}
    [-2^{\epsilon_{d_{1}}\cdot k}\cdot s_{d_{1}}, 2^{\epsilon_{d_{1}}\cdot k}\cdot s_{d_{1}}]. 
\end{equation}
Write the multiplier in \eqref{230209e2_25}  as 
\begin{equation}\label{221124e2_36}
\begin{split}
    & \int_{\R} e^{i2^{\mathfrak{k}_0}
    \Phi(\theta; \circu_{\mathfrak{m}_0}; \bxi)}
    \mathfrak{a}_{d_{1}, s_{d_{1}}}(\theta; \circu_{\mathfrak{m}_0}; \bxi)
    \varphi^+_{s_{d_{1}}}(\theta-\theta_{d_{1}-1}(\circu_{\mathfrak{m}_0}))d\theta\\
    & + \int_{\R} e^{i2^{\mathfrak{k}_0}
    \Phi(\theta; \circu_{\mathfrak{m}_0}; \bxi)}
    \mathfrak{a}_{d_{1}, s_{d_{1}}}(\theta; \circu_{\mathfrak{m}_0}; \bxi)
    (
    1-\varphi^+_{s_{d_{1}}}(\theta-\theta_{d_{1}-1}(\circu_{\mathfrak{m}_0}))
    )d\theta.
\end{split}
\end{equation}
Because of the $\epsilon$-room $2^{\epsilon_{d_{1}}\cdot k}$ we created in \eqref{221124e2_35}, the second term on the right hand side of \eqref{221124e2_36} decays rapidly. To see this, note that there we have  
\begin{equation}\label{221124e2_38}
    \anorm{
    \frac{\partial (2^{k} \Phi)}{\partial \theta}
    }\gtrsim 2^{k} (2^{\epsilon_{d_{1}}\cdot k}\cdot s_{d_{1}})^{d_{1}-1}.
\end{equation}
Moreover, each time when we apply integration by parts, we collect a factor $(2^{\epsilon_{d_{1}}\cdot k}\cdot s_{d_{1}})^{-1}$, which is the derivative of the involved cut-off function. This explains the choice of $s_{d_{1}, 0}$ as in \eqref{221124e2_17}.\\

Let us focus on the first term on the right hand side of \eqref{221124e2_36}. Let $\varphi_{s_{d_{1}}}: \R\to \R$ be an $L^{\infty}$-normalized smooth bump function adapted to the interval $[-s_{d_{1}}, s_{d_{1}}]$.  By losing a multiplicative constant $2^{\epsilon_{d_{1}}\cdot k}$, we will only consider the contribution from
\begin{equation}\label{221124e2_35zz}
\iint_{\R^2} \widehat{f}(\bxi)
m_{d_{1}, s_{d_{1}}}(\circu_{\mathfrak{m}_0}; \bxi) e^{i\bfx\cdot \bxi} d\bxi, 
\end{equation}
where 
\begin{equation}\label{230123e_4_30_Yang}
    m_{d_1, s_{d_1}}(\circu_{\mathfrak{m}_0}; \bxi):=\int_{\R} e^{i2^{\mathfrak{k}_0} \Phi(\theta; \circu_{\mathfrak{m}_0}; \bxi)}
    \mathfrak{a}_{d_{1}, s_{d_{1}}}(\theta; \circu_{\mathfrak{m}_0};\bxi) \varphi_{s_{d_{1}}}(\theta-\theta_{d_{1}-1}(\circu_{\mathfrak{m}_0}))d\theta.
\end{equation}

Note that the partial derivative of 
\begin{equation}\label{Y_230407}
\anorm{
    \frac{\Psi_{d_{1}-1, d_{1}-2}}{(s_{d_{1}})^2}
    }^2+\dots
    +
    \anorm{
    \frac{\Psi_{d_{1}-1, 1}+\xi'}{(s_{d_{1}})^{d_{1}-1}}
    }^2
\end{equation}
in the $\xi$ variable is comparable to $(s_{d_{1}})^{1-d_{1}}$. This suggests that we further decompose our multiplier as 
\begin{equation}
m_{d_{1}, s_{d_{1}}}(\circu_{\mathfrak{m}_0}; \bxi)
=\sum_{\Theta_{d_{1}}\subset \R, \ell(\Theta_{d_{1}})=(s_{d_{1}})^{d_{1}-1}
} m_{d_{1}, s_{d_{1}}, \Theta_{d_{1}}}(\circu_{\mathfrak{m}_0}; \bxi),
\end{equation}
where $m_{d_{1}, s_{d_{1}}, \Theta_{d_{1}}}(\circu_{\mathfrak{m}_0}; \bxi)$ is defined by the amplitude function $\mathfrak{a}_{d_{1}, s_{d_{1}}}(\theta; \circu_{\mathfrak{m}_0}; \bxi) a_{3, \Theta_{d_{1}}}(\xi')$, 
for appropriately defined smooth bump function $a_{3, \Theta_{d_{1}}}$ adapted to the interval $\Theta_{d_{1}}$. We further bound the $L^p$ norm of  \eqref{221124e2_35zz} by 
\begin{equation}\label{221124e2_33} 
\Big( 
\sum_{\Theta_{d_{1}}}
\Norm{
\iint_{\R^2} \widehat{f}(\bxi)
m_{d_{1}, s_{d_{1}}, \Theta_{d_{1}}}(\circu_{\mathfrak{m}_0}; \bxi) e^{i\bfx\cdot \bxi} d\bxi 
}_{L^p}^p \Big)^{\frac{1}{p}}.
\end{equation}
This holds since the disjointness of $\Theta_{d_1}$ in the $\xi'$ variable implies the disjointness of $\bfu_{\mathfrak m_0}$, in view of the last term of \eqref{Y_230407}.

To analyze the multiplier $m_{d_{1}, s_{d_{1}}, \Theta_{d_{1}}}(\circu_{\mathfrak{m}_0}; \bxi)$, we first do a change of variables
\begin{equation}
    \theta\mapsto \theta+\theta_{d_{1}-1}(\circu_{\mathfrak{m}_0}).
\end{equation}
The new phase function is 
\begin{align}
&\Phi(\theta+\theta_{d_{1}-1}(\circu_{\mathfrak{m}_0}); \circu_{\mathfrak{m}_0}; \bxi)\\
&=\Phi(\theta_{d_1-1}(\circu_{\mathfrak{m}_0});\circu_{\mathfrak{m}_0}; \bxi)
+\Big(\eta\Psi_{d_{1}-1, 1}(\circu_{\mathfrak{m}_0})+\xi\Big)\theta
+\eta\sum_{\iota=2}^d \Psi_{d_{1}-1, \iota}(\circu_{\mathfrak{m}_0}) \frac{\theta^{\iota}}{\iota!} + \eta \theta^{d+1}P(\theta;\circu _{\mathfrak m_0};\bxi),
\end{align}
where $P$ is a smooth function.

Note that $\Psi_{d_{1}-1, \iota}$ is constant in $\bxi$ whenever $\iota\ge 1$.  We continue with the change of variables 
\begin{equation}\label{221119e4_24}
    \Psi_{d_{1}-1, d}(\circu_{\mathfrak{m}_0})\mapsto v_d, \ \dots, \  \Psi_{d_{1}-1, d_{1}}(\circu_{\mathfrak{m}_0}) \mapsto v_{d_{1}}, \ \Psi_{d_{1}-1, d_{1}-2}(\circu_{\mathfrak{m}_0})\mapsto v_{d_{1}-2}, \ \dots, \  \Psi_{d_{1}-1, 1}(\circu_{\mathfrak{m}_0})\mapsto v_1.
\end{equation}
This change of variables can also be written as 
\begin{equation}\label{230603e4_49}
    (\Psi_{d_{1}-1, 1}(\circu_{\mathfrak{m}_0}), \dots, \Psi_{d_{1}-1, d}(\circu_{\mathfrak{m}_0}))\mapsto \bfv_{d_{1}-1},
\end{equation}
where $\bfv_{d_{1}-1}=(v_1, \dots, v_{d_{1}-2}, v_{d_{1}}, \dots, v_d)$. Note that in the above change of variables, we do not have the term $\Psi_{d_{1}-1, d_{1}-1}$, as it vanishes constantly. 

\begin{claim}\label{221124claim2_4}
The Jacobian of the change of variables in \eqref{221119e4_24}  has absolute value comparable to 1. 
\end{claim}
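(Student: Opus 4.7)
My plan is to compute the Jacobian of \eqref{221119e4_24} via the chain rule and identify it, up to a single scalar factor, with the cinematic curvature determinant \eqref{Y_230330nondegenerate}. Write $\theta_\ast := \theta_{d_1-1}(\circu_{\mathfrak m_0})$. Since $\Phi = \theta\xi + \gamma(\theta;\circu_{\mathfrak m_0})\eta$, at $\eta = 1$ one has $\partial_\theta^\iota \Phi = \partial_\theta^\iota \gamma$ for every $\iota \ge 2$, while the subtraction of $\xi$ in \eqref{230115e3_17} cancels the linear term in the $\iota = 1$ case. Hence $\Psi_{d_1-1,\iota}(\circu_{\mathfrak m_0}) = \partial_\theta^\iota \gamma(\theta_\ast;\circu_{\mathfrak m_0})$ for every $1 \le \iota \le d$; in particular each $\Psi_{d_1-1,\iota}$ depends only on $\circu_{\mathfrak m_0}$. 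With $\bfT$ as in \eqref{tagent} and $\pi \colon \R^d \to \R^{d-1}$ the projection deleting the $(d_1-1)$-th coordinate, the map in \eqref{221119e4_24} becomes $F(\circu_{\mathfrak m_0}) = \pi\,\bfT(\theta_\ast(\circu_{\mathfrak m_0});\circu_{\mathfrak m_0})$.

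Next, the chain rule gives $\partial_{\circu_{\mathfrak m_0}} F = \pi\, M$, where
\[ M := (\partial_\theta \bfT)(\partial_{\circu_{\mathfrak m_0}} \theta_\ast) + \partial_{\circu_{\mathfrak m_0}} \bfT \]
is a $d \times (d-1)$ matrix evaluated at $(\theta_\ast;\circu_{\mathfrak m_0})$. The crucial observation is that the $(d_1-1)$-th row of $M$ equals $\partial_{\circu_{\mathfrak m_0}} \Psi_{d_1-1}$, which vanishes identically because $\Psi_{d_1-1} \equiv 0$ by the defining equation of $\theta_\ast$. I then augment with the column $\partial_\theta \bfT$: the $d \times d$ matrix $\widetilde M := [\partial_\theta \bfT,\ M]$ admits the column-operation factorisation
\[ \widetilde M = [\partial_\theta \bfT,\ \partial_{\circu_{\mathfrak m_0}} \bfT] \cdot \begin{pmatrix} 1 & \partial_{\circu_{\mathfrak m_0}} \theta_\ast \\ 0 & I_{d-1} \end{pmatrix}, \]
so $\det \widetilde M$ coincides with the cinematic curvature determinant of \eqref{Y_230330nondegenerate} evaluated at $(\theta_\ast;\circu_{\mathfrak m_0})$.

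Finally, Laplace expansion of $\det \widetilde M$ along its $(d_1-1)$-th row---whose only nonzero entry is $(\partial_\theta \bfT)_{d_1-1} = \partial_\theta^{d_1} \gamma$, located in the first column---yields
\[ \det \widetilde M = \pm\, \partial_\theta^{d_1} \gamma(\theta_\ast;\circu_{\mathfrak m_0}) \cdot \det\bigl(\partial_{\circu_{\mathfrak m_0}} F\bigr). \]
By \eqref{Y_230330nondegenerate} and continuity, $|\det \widetilde M| \simeq 1$ on a sufficiently small neighbourhood of the origin (in which $(\theta_\ast;\circu_{\mathfrak m_0})$ lies, once the support of $\mathfrak a^{(0)}$ has been chosen small enough). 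By the selection rule \eqref{230114e3_15} together with $|\eta| \simeq 1$, one has $|\partial_\theta^{d_1} \gamma(\theta_\ast;\circu_{\mathfrak m_0})| \simeq 1$ on the relevant support. The conclusion $|\det \partial_{\circu_{\mathfrak m_0}} F| \simeq 1$ follows.

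The main conceptual step is recognising the rank-one-plus-chain-rule structure of $M$ cleanly enough to notice the vanishing of its $(d_1-1)$-th row. Once this is in hand, the column-operation factorisation, the Laplace expansion, and the invocations of \eqref{Y_230330nondegenerate} and \eqref{230114e3_15} are routine; no further information about the internal structure of the normal form is needed.
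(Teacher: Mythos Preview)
Your proof is correct and reaches the same identity as the paper: the Jacobian equals, up to sign, the cinematic curvature determinant \eqref{Y_230330nondegenerate} at $(\theta_\ast;\circu_{\mathfrak m_0})$ divided by $\partial_\theta^{d_1}\gamma(\theta_\ast;\circu_{\mathfrak m_0}) = \Psi_{d_1-1,d_1}$. The paper arrives at this by writing out $\partial_{u_i}\Psi_{d_1-1,j}$ explicitly in the normal-form coordinates (so that $\partial_{u_i}\partial_\theta^j\gamma = \theta^{i-j}/(i-j)! + $ remainder), then splitting each column and doing cofactor manipulations by hand until the curvature matrix emerges. Your route is more streamlined: by packaging the $\Psi_{d_1-1,\iota}$ as the components of $\bfT(\theta_\ast;\circu_{\mathfrak m_0})$, the chain rule immediately reveals the rank-one structure $M = (\partial_\theta\bfT)(\partial_{\circu_{\mathfrak m_0}}\theta_\ast) + \partial_{\circu_{\mathfrak m_0}}\bfT$, the vanishing of the $(d_1-1)$-th row, and the column-operation factorisation $\det\widetilde M = \det[\partial_\theta\bfT,\partial_{\circu_{\mathfrak m_0}}\bfT]$ without ever unpacking the normal form. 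This buys you a proof that works verbatim for any $\gamma$ satisfying \eqref{Y_230330nondegenerate}, not just those already reduced to Form~(I) or~(II); the paper's computation, by contrast, is tied to the normal-form expansion and has to track the remainder terms $R$ explicitly.
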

\begin{proof}[Proof of Claim \ref{221124claim2_4}]
We only give a proof in the case $d_1<d+1$, and the case $d_1=d+1$ is similar with only notational changes. For all $i\ne \mathfrak m_0$, $j\ne d_1-1$, we have
\begin{equation}
    \partial_{u_i}v_j=
    \partial_{u_i}\Big( 
    \Psi_{d_1-1,j}
    \Big)
    =\Psi_{d_1-1,j+1}\partial_{u_i}\theta_{d_1-1}+\frac{\theta_{d_1-1}^{i-j}}{(i-j)!}+\Big(\partial_{u_i} \partial_{\theta}^j R
    \Big)(\theta_{d_1-1}; \circu_{\mathfrak{m}_0}),    
\end{equation}
where $R$ collects remainder terms
\begin{equation}
    R(\theta; \bfu_{\mathfrak{m}_0}):=Q(\bfu_{\mathfrak{m}_0})
    \frac{\theta^{\mathfrak{m}_0}}{\mathfrak{m}_0!}
    +
    P(\theta; \bfu_{
    \mathfrak{m}_0
    })\frac{\theta^{d+1}}{(d+1)!}.
\end{equation}
Recall that we have the convention $i!=\infty$ for negative integers $i$. Also recall that $\theta_{d_1-1}$ is the unique solution of 
\begin{equation}
    \frac{\partial^{d_1-1}}{\partial\theta^{d_1-1}}\Phi(\theta;\bfu_{\mathfrak m_0};\bxi)=0.
\end{equation}
To simplify the notation, we will abbreviate $\theta_{d_1-1}$ to $\theta$ in the rest of the  proof. Taking a partial derivative in $u_i$, we get
\begin{equation}
    \partial_{u_i}\theta=-\frac{\frac{\theta^{i-d_1+1}}{(i-d_1+1)!}+(\partial_{u_i} \partial_{\theta}^{d_1-1}
    R)(\theta; \circu_{\mathfrak{m}_0})}{\Psi_{d_1-1,d_1}}, \quad \forall \ i\neq \mathfrak m_0.
\end{equation}
The Jacobian we need to compute can be writen as 
\begin{equation}
    \mathrm{Jacobian}
    =\det \left(\Psi_{d_1-1,j+1}\partial_{u_i}\theta+\frac{\theta^{i-j}}{(i-j)!}+
    \partial_{
    u_i
    }
    \partial^j_{\theta}
    R\right)_{i\ne \mathfrak m_0,j\ne d_1-1}(\theta; \bfu_{\mathfrak m_0}).
\end{equation}
Splitting each column of the determinant into two terms, and by elementary column operations, we have
\begin{equation}
    \mathrm{Jacobian}    =\det\left(\frac{\theta^{i-j}}{(i-j)!}+\partial_{u_i}
    \partial^j_{\theta}
    R\right)_{i\ne \mathfrak m_0,j\ne d_1-1}
    +
    \sum_{k\ne d_1-1}\det\left(a^{(k)}_{ij}\right)_{i\ne \mathfrak m_0,j\ne d_1-1},
\end{equation}
where 
\begin{equation}
    a^{(k)}_{ij}=
    \begin{cases}
        \displaystyle\partial_{u_i}
        \partial^j_{\theta}
        R+\frac{\theta^{i-j}}{(i-j)!}\quad &\text{if }j\ne k,\\
        \displaystyle-\frac{\Psi_{d_1-1,j+1}}{\Psi_{d_1-1,d_1}}\left(\frac{\theta^{i-d_1+1}}{(i-d_1+1)!}+\partial_{u_i}
        \partial^{d_1-1}_{\theta}
        R\right)\quad &\text{if }j= k.
    \end{cases}       
\end{equation}
By co-factor expansion, the Jacobian is thus equal to the following $d\times d$ determinant:
\begin{equation}
    \det\begin{bmatrix}
        (-1)^{d_1}\frac{\Psi_{d_1-1,2}}{\Psi_{d_1-1,d_1}} & (-1)^{d_1}\frac{\Psi_{d_1-1,3}}{\Psi_{d_1-1,d_1}} & \dots & (-1)^{d_1}\frac{\Psi_{d_1-1,d+1}}{\Psi_{d_1-1,d_1}}\\
        1+\partial_{u_1}
        \partial_{\theta}
        R &\partial_{u_1}
        \partial^2_{\theta}
        R & \dots &\partial_{u_1}
        \partial^d_{\theta}
        R\\        
        \theta+\partial_{u_2}
        \partial_{\theta}
        R & 1+\partial_{u_2}
        \partial^2_{\theta}
        R & \dots & \partial_{u_2}
        \partial^d_{\theta}
        R\\
        \vdots & \vdots & \ddots &\vdots\\
        \frac{\theta^{\mathfrak m_0-2}}{(\mathfrak m_0-2)!}+\partial_{u_{\mathfrak m_0-1}}
        \partial_{\theta}
        R & \frac{\theta^{\mathfrak m_0-3}}{(\mathfrak m_0-3)!}+\partial_{u_{\mathfrak m_0-1}}
        \partial^2_{\theta}
        R & \dots & \frac{\theta^{\mathfrak m_0-d-1}}{(\mathfrak m_0-d-1)!}+\partial_{u_{\mathfrak m_0-1} }
        \partial^d_{\theta}
        R\\

        \frac{\theta^{\mathfrak m_0}}{(\mathfrak m_0)!}+\partial_{u_{\mathfrak m_0+1}}
        \partial_{\theta}
        R & \frac{\theta^{\mathfrak{m}_0-1}}{(\mathfrak{m}_0-1)!}+\partial_{u_{\mathfrak m_0+1}}
        \partial^2_{\theta}
        R &\dots & \frac{\theta^{\mathfrak m_0-d+1}}{(\mathfrak m_0-d+1)!}+\partial_{u_{\mathfrak m_0+1} }
        \partial_{\theta}^{d}
        R\\
        \vdots & \vdots & \ddots & \vdots\\
        \frac{\theta^{d-1}}{(d-1)!}+\partial_{u_d}
        \partial_{\theta}
        R & \frac{\theta^{d-2}}{(d-2)!}+\partial_{u_d}
        \partial^2_{\theta}
        R & \dots & 1+\partial_{u_d}
        \partial^d_{\theta}
        R
    \end{bmatrix}.
\end{equation}
On the other hand, we note this $d\times d$ determinant is equal to 
\begin{equation}
    \frac{(-1)^{d_1}}{\Psi_{d_1-1,d_1}}\det\begin{bmatrix}
        \Psi_{d_1-1,2} & \Psi_{d_1-1,3} & \dots & \Psi_{d_1-1,d+1}\\     \partial_{u_1}
        \partial_{\theta}
        \gamma & \partial_{u_1}
        \partial^2_{\theta}
        \gamma& \dots &\partial_{u_1}
        \partial^d_{\theta}
        \gamma\\
        \vdots & \vdots & \ddots & \vdots\\

        \partial_{u_{\mathfrak m_0-1}}
        \partial_{\theta}
        \gamma & \partial_{u_{\mathfrak m_0-1}}
        \partial^2_{\theta}
        \gamma & \dots &\partial_{u_{\mathfrak m_0-1}}
        \partial^d_{\theta}
        \gamma\\
        
        \partial_{u_{\mathfrak m_0+1}}
        \partial_{\theta}
        \gamma & \partial_{u_{\mathfrak m_0+1}}
        \partial^2_{\theta}
        \gamma & \dots &\partial_{u_{\mathfrak m_0+1}}
        \partial^d_{\theta}
        \gamma\\
        \vdots & \vdots & \ddots & \vdots\\
        \partial_{u_d}
        \partial_{\theta}
        \gamma & \partial_{u_d}
        \partial^2_{\theta}
        \gamma & \dots & \partial_{u_d}
        \partial^d_{\theta}
        \gamma
        \end{bmatrix},
\end{equation}
where the determinant right above is exactly the non-degeneracy condition \eqref{Y_230330nondegenerate} at $(\theta; \circu_{\mathfrak{m}_0})$, except with a transpose and $\bfu_{\mathfrak m_0}$ in place of $\bfv$. By continuity, this is nonzero if the initial bump function $\chi(\theta;\bfv)$ is supported in a small enough neighbourhood of the origin. Also, $|\Psi_{d_1-1,d_1}|\simeq 1$ which follows from \eqref{230114e3_15}. Thus we are done.
\end{proof}

After the change of variables in \eqref{221119e4_24}, what we need to control becomes
\begin{equation}\label{230114e3_48}
       \Big(
    \sum_{\Theta_{d_{1}}}
    \Norm{
    \iint_{\R^2}
    \widehat{f}(\bxi) 
    \bnorm{
    \int_{\R} e^{i2^{
    \mathfrak{k}_0
    } \Phi(\theta; \circv_{d_{1}-1}; \bxi)}
    \varphi_{s_{d_{1}}}(\theta)\mathfrak{a}^{(0)}(\theta; \circu_{\mathfrak{m}_0}; \bxi)d\theta
    }
    \mathfrak{b}_{d_{1}, s_{d_{1}}, \Theta_{d_{1}}}(\circv_{d_{1}-1}; \bxi)
    e^{i(\bfx\cdot \bxi)}d\bxi
    }_{L^p_{\circv_{d_{1}-1}; \bfx}
    }^p\Big)^{\frac{1}{p}} 
\end{equation}
where (using \eqref{230123e_4_30_Yang} and \eqref{230123e4_34_Yang}) 
\begin{equation}
    \mathfrak{b}_{d_{1}, s_{d_{1}}, \Theta_{d_{1}}}(\circv_{d_{1}-1}; \bxi):=\widetilde{\mathfrak{a}}_{d_{1}, s_{d_{1}}}(\circu_{\mathfrak{m}_0}(\circv_{d_{1}-1}); \bxi) a_{3, \Theta_{d_{1}}}(\xi'),
\end{equation}
with $\circu_{\mathfrak{m}_0}(\circv_{d_{1}-1})$ being the reverse of \eqref{221119e4_24}, and 
\begin{equation}\label{230114e3_50}
    \Phi(\theta; \circv_{d_{1}-1}; \bxi):=\eta\Big(v_d \frac{\theta^d}{d!}+\dots+v_{d_{1}}
    \frac{\theta^{d_{1}}}{(d_1)!}
    +v_{d_{1}-2}\frac{\theta^{d_{1}-2}}{(d_1-2)!}+\dots+v_2 \frac{\theta^2}{2!}\Big)+(v_1\eta+\xi)\theta+O(|\theta|^{d+1}).
\end{equation}
Let us record here that (using \eqref{230123e4_34_Yang} and \eqref{221119e4_24}, and with $a$ replaced by $a_0$ if $s_{d_{1}}=s_{d_{1},0}$)
\begin{equation}\label{230208e2_47}
    \mathfrak{b}_{d_{1}, s_{d_{1}}, \Theta_{d_{1}}}(\circv_{d_{1}-1}; \bxi)=a_{2, d_{1}, I_{d_{1}}}(\circu_{\mathfrak{m}_0}(\circv_{d_{1}-1})) 
    a\pnorm{
    \anorm{
    \frac{v_{d_{1}-2}}{(s_{d_{1}})^2}
    }^2+\dots+
    \anorm{
    \frac{v_{1}+\xi'}{(s_{d_{1}})^{d_{1}-1}}
    }^2
    } a_{3, \Theta_{d_{1}}}(\xi').
\end{equation}
\begin{remark}
    The only role that the amplitude function $a_{2, d_{1}, I_{d_{1}}}(\circu_{\mathfrak{m}_0}(\circv_{d_{1}-1}))$ plays is that it tells us $|v_{d_{1}}|\gtrsim 1$ (when we are in the case $d_1<d+1$). 
\end{remark}
Let $c(\Theta_{d_{1}})$ be the center of the interval $\Theta_{d_{1}}$.  We apply the change of variables $\theta\mapsto s_{d_{1}}\theta$, then 
\begin{equation}\label{Y_230412_114am}    
v_{d_{1}-2}\mapsto v_{d_{1}-2} (s_{d_{1}})^2, \ \ \dots, \ \  v_2\mapsto v_2 (s_{d_{1}})^{d_{1}-2}, \ \ v_1+c(\Theta_{d_{1}})\mapsto v_1 (s_{d_{1}})^{d_{1}-1},
\end{equation}
and in the end 
\begin{equation}\label{230411e3_63}
    \eta\mapsto \eta, \ \ 
    \frac{\xi-c(\Theta_{d_{1}}) \eta}{(s_{d_{1}})^{d_{1}-1}}
    \mapsto \xi.
\end{equation}
These will turn \eqref{230114e3_48} to  
\begin{equation}\label{230114e3_53}
    s_{d_{1}} (s_{d_{1}})^{
    \frac{(d_{1}+1)(d_{1}-2)}{2p}
    } 
    \jac(s_{d_{1}})
    \Big( 
    \sum_{\Theta_{d_{1}}}
    \Norm{
    \iint_{\R^2}
    \widehat{f}_{\Theta_{d_{1}}}(\bxi) 
    \bnorm{
    \int_{\R} e^{i 2^{k_1}
    \Phi_{s_{d_{1}}}(\theta; \circv_{d_{1}-1}; \bxi)}
    \varphi(\theta)d\theta
    }
    \widetilde{\mathfrak{b}}_{d_{1}, s_{d_{1}}}(\circv_{d_{1}-1}; \bxi)
    e^{i(\bfx\cdot \bxi)}d\bxi
    }_{L^p
    }^p\Big)^{\frac{1}{p}}, 
\end{equation}
where 
\begin{equation}\label{Y_230520}
\widehat f_{\Theta_{d_1}}(\bxi):=\widehat f\left((s_{d_{1}})^{d_{1}-1}\xi+c(\Theta_{d_{1}}) \eta,\eta\right),   
\end{equation}
\begin{equation}\label{230208e2_51}
    2^{k_1}:=2^{\mathfrak{k}_0}  (s_{d_{1}})^{d_{1}}, \ \ \jac(s_{d_{1}}):=(s_{d_{1}})^{d_{1}-1} (s_{d_{1}})^{-\frac{d_{1}-1}{p}},
\end{equation}
\begin{equation}\label{230123e3_64}
    \widetilde{\mathfrak{b}}_{d_{1}, s_{d_{1}}}(\circv_{d_{1}-1}; \bxi)=\widetilde{a}_{2, d_{1}, I_{d_{1}}}(\circv_{d_{1}-1}) 
    a\pnorm{
    (v_{d_{1}-1})^2+\dots+
    (v_1+\xi')^2
    },
\end{equation}
with 
\begin{equation}
    \widetilde{a}_{2, d_{1}, I_{d_{1}}}(\circv_{d_{1}-1}):=a_{2, d_{1}, I_{d_{1}}}(\circu_{\mathfrak{m}_0}(\circv_{d_{1}-1})),
\end{equation}
and 
\begin{equation}\label{230114e3_55}
    \Phi_{s_{d_{1}}}
    (\theta; \circv_{d_{1}-1}; \bxi):=\eta
    \pnorm{
    v_{d} (s_{d_{1}})^{d-d_{1}}\frac{\theta^d}{d!}+\dots+v_{d_{1}} (s_{d_{1}})^{d_{1}-d_{1}}\frac{\theta^{d_{1}}}{(d_{1})!}+ 
    v_{d_{1}-2} \frac{\theta^{d_{1}-2}}{(d_{1}-2)!}+\dots+v_1 \theta
    }+\xi\theta+O(|\theta|^{d+1}).
\end{equation}

If we denote 
\begin{equation}
    \Vec{D}_{d_1-1, s_{d_1}}(\circv_{d_1-1}):=(v_d (s_{d_1})^{d-d_1}, \dots, 
    v_{d_1} (s_{d_1})^{d_1-d_1}, 0, v_{d_1-2}, \dots, v_1
    )\in \R^{d},
\end{equation}
then the phase function in \eqref{230114e3_55} can be written as 
\begin{equation}
    \Phi_{s_{d_1}}(\theta; \circv_{d_1-1}; \bxi)=\eta \Vec{D}_{d_1-1, s_{d_1}}(\circv_{d_1-1})\cdot (
    \frac{\theta^d}{d!}, \dots, \frac{\theta}{1!}
    )+\xi\theta+O(|\theta|^{d+1}). 
\end{equation}
The factor $\jac(s_{d_{1}})$ in \eqref{230208e2_51} appears when doing the change of variables in $\bxi$ and then in $\bfx$. It is not important, as it will be cancelled out later when we revert the above changes of variables. \\

We are ready to define the resulting data as in \eqref{230210e2_13} and  \eqref{230210e2_15z}. Recall from \eqref{230210e2_32} we have already defined $\mathfrak{d}_1=d_1$. Moreover, define
\begin{equation}\label{230210e2_63} 
    \mathfrak{k}_1:=k_1, \ \ 
    \mathfrak{m}_1:=d_1-1, \ \  \mathfrak{s}_1:=s_{d_1}, \ \ \Phi_{\mathfrak{s}_1}(\theta; \circu_{\mathfrak{m}_1}; \bxi):=\Phi_{s_{d_1}}(\theta; \circu_{d_1-1}; \bxi). 
\end{equation}
It remains to define amplitude functions $\mathfrak{a}^{(1)}(\theta; \circu_{\mathfrak{m}_1}; \bxi)$, like in \eqref{230210e2_13}. 

If we are in the case $s_{d_{1}}=s_{d_{1}, 0}$, then define 
\begin{equation}\label{230211e2_63}
    \mathfrak{a}^{(1)}(\theta; \circu_{\mathfrak{m}_1}; \bxi):= \varphi(\theta) \widetilde{\mathfrak{b}}_{d_1, s_{d_1}}(\circu_{d_1-1}; \bxi),
\end{equation}
and terminate the algorithm. 

Assume that $s_{d_1}> s_{d_1, 0}$.  Consider the new phase function $\Phi_{s_{d_{1}}}$ given by \eqref{230114e3_55}. Note that in the support of the amplitude function $\widetilde{a}_{2, d_{1}, I_{d_{1}}}$, it always holds that $|v_{d_{1}}|\simeq 1$ when $d_1<d+1$. If we are in the case $d_1=d+1$,  the coefficient of the term $\theta^{d+1}$ is not called $v_{d+1}$ anymore, but it still has an absolute value comparable to $1$.  Let $c_{d_{1}, 4}\ll c_{d_{1}, 3}$ be a small constant. We write the integral in $\theta$ in the expression \eqref{230114e3_53} as 
\begin{equation}\label{230208e2_56}
    \int_{\R} e^{i 2^{k_1}
    \Phi_{s_{d_{1}}}(\theta; \circv_{d_{1}-1}; \bxi)}
    \varphi_{c_{d_{1}, 4}}(\theta)d\theta+  \int_{\R} e^{i 2^{k_1}
    \Phi_{s_{d_{1}}}(\theta; \circv_{d_{1}-1}; \bxi)}
    (1-\varphi_{c_{d_{1}, 4}}(\theta))d\theta,
\end{equation}
where $\varphi_{c_{d_{1}, 4}}$ is supported on $[-c_{d_{1}, 4}, c_{d_{1}, 4}]$ and $(1-\varphi_{c_{d_{1}, 4}}(\theta))$ is supported away from the origin. Let us compare the contribution between the former term and the latter term in \eqref{230208e2_56}. If we are in the case that the latter term dominates, then we define 
\begin{equation}\label{230210e2_68}
    \mathfrak{a}^{(1)}(\theta; \circu_{\mathfrak{m}_1}; \bxi):= (1-\varphi_{c_{d_1, 4}}(\theta)) \widetilde{\mathfrak{b}}_{d_1, s_{d_1}}(\circu_{d_1-1}; \bxi).
\end{equation}
Otherwise, we define 
\begin{equation}\label{230211e2_66}
       \mathfrak{a}^{(1)}(\theta; \circu_{\mathfrak{m}_1}; \bxi):= \varphi_{c_{d_1, 4}}(\theta) \widetilde{\mathfrak{b}}_{d_1, s_{d_1}}(\circu_{d_1-1}; \bxi).
\end{equation}

The resulting term in \eqref{230114e3_53} can therefore be written as 
\begin{equation}\label{230210e2_61}
    \mathfrak{s}_1 (\mathfrak{s}_1)^{
    \frac{(\mathfrak{m}_1+2)(\mathfrak{m}_1-1)}{2p}
    } 
    \jac(\mathfrak{s}_1)
    \Big( 
    \sum_{\Theta_{d_{1}}}
    \Norm{
    \iint_{\R^2}
    \widehat{f}_{\Theta_{d_{1}}}(\bxi) 
    \bnorm{
    \int_{\R} e^{i 2^{\mathfrak{k}_1}
    \Phi_{\mathfrak{s}_1}(\theta; \circu_{\mathfrak{m}_1}; \bxi)}
    \mathfrak{a}^{(1)}(\theta; \circu_{\mathfrak{m}_1}; \bxi)d\theta
    }
    e^{i(\bfx\cdot \bxi)}d\bxi
    }_{L^p
    }^p\Big)^{\frac{1}{p}}.
\end{equation}
This finishes defining data for the first step of the algorithm. Recall the choice of data in \eqref{230210e2_15z}. Let us record that
\begin{equation}\label{230212e2_68}
    \mathfrak{s}_1\in 
    \begin{cases}
    \{1\}, & \text{ if  } \mathfrak{d}_1=2;\\
    [2^{-\frac{\mathfrak{k}_0}{\mathfrak{d}_1}} , \frac{1}{2}] & \text{ if } \mathfrak{d}_1>2, 
    \end{cases}
    \ \ \text{ and }  2^{\mathfrak{k}_1}=2^{\mathfrak{k}_0} (\mathfrak{s}_1)^{\mathfrak{d}_1}.
\end{equation}
We will repeat the whole argument in the first step for every term in \eqref{230210e2_61}, which will be called the second step of the algorithm. \\

Before we continue to the next step of the algorithm, let us explain the choices of the amplitude functions in \eqref{230211e2_63}, \eqref{230210e2_68} and \eqref{230211e2_66}. 

If we are in the $s_{d_1}=s_{d_1, 0}$, we see that $k_1=0$, and the integral in $\theta$ in \eqref{230210e2_61} does not oscillate anymore. Therefore, our algorithm will not further process the phase function.

Consider the amplitude function in \eqref{230210e2_68}.  We take the $(d_{1}-1)$-th derivative of the phase function $\Phi_{\mathfrak{s}_1}$ in the $\theta$ variable, and obtain 
\begin{equation}
    v_d (s_{d_{1}})^{d-d_1} \frac{\theta^{d-d_{1}+1}}{(d-d_{1}+1)!}+\dots+v_{d_{1}} \theta.
\end{equation}
Recall that $|s_{d_{1}}|\le c_{d_{1}, 3}$ and $\theta$ is away from the origin in this case. We therefore see that 
\begin{equation}
    \anorm{
    \frac{
    \partial^{d_{1}-1}}{\partial \theta^{d_{1}-1}}
    \Phi_{s_{d_{1}}}(\theta; \circv_{d_{1}-1}; \bxi)
    }\gtrsim 1. 
\end{equation}
Recall that we started with a phase function $\Phi$ that satisfies \eqref{230114e3_15}, and have arrived a new phase function $\Phi_{s_{d_{1}}}$ that satisfies a ``better" derivative bound.

Consider the amplitude function in \eqref{230211e2_66}. Recall that in this case, $\theta$ is supported in a small neighbourhood of the origin. Moreover, 
\begin{equation}
    (v_{d_{1}-2})^2+\dots +(v_2+\xi')^2\simeq 1.
\end{equation}
We can therefore cut the support of $\circv_{d_{1}-1}$ into $O_d(1)$ pieces, expressed by 
\begin{equation}
    \sum_{2\le d'_1\le d_1-2}b_{2, d'_1}(\circv_{d_1-1})=1,
\end{equation}
for every relevant $\circv_{d_1-1}$, so that on each piece it holds that 
\begin{equation}
    \anorm{
    \frac{
    \partial^{d_{1}'}}{\partial \theta^{d_{1}'}}
    \Phi_{s_{d_{1}}}(\theta; \circv_{d_{1}-1}; \bxi)
    }\gtrsim 1,
\end{equation}
for some $2\le d_{1}'\le d_{1}-2$.

\subsection{The second step of the algorithm}

Before describing the general steps of the algorithm, we still need to describe the second step, as there is more data we need to record in the algorithm. \\

Recall that after the first step, we have controlled the left hand side of the desired estimate \eqref{230209e2_8} by \eqref{230210e2_61}. The data there are given by \eqref{230210e2_63} and satisfy \eqref{230212e2_68}. Recall that the algorithm terminates if we are in the case $\mathfrak{d}_1=2$ or $\mathfrak{k}_1=0$. Otherwise, we proceed as follows.

Similarly to \eqref{230122e_18}--\eqref{221123e2_14}, for the phase function $\Phi_{\mathfrak{s}_1}(\theta; \circu_{\mathfrak{m}_1}; \bxi)$ on the support of the amplitude function $\mf{a}^{(1)}(\theta; \circu_{\mf{m}_1}; \bxi)$, we can cut the support of $\mf{a}^{(1)}$ in the $\circu_{\mf{m}_1}$ variable into $O_d(1)$ many pieces such that on each piece, either it holds that 
\begin{equation}\label{230212e2_74}
    \anorm{
    \frac{
    \partial^{d_2}
    }{
    \partial \theta^{d_2}
    }   
    \Phi_{\mathfrak{s}_1}(\theta; \circu_{\mathfrak{m}_1}; \bxi)
    }
    \gtrsim 1, \ \ \forall \theta,
\end{equation}
and 
\begin{equation}\label{230212e2_75}
    \frac{
    \partial^{d_2}
    }{
    \partial \theta^{d_2}
    }   
    \Phi_{\mathfrak{s}_1}(\theta; \circu_{\mathfrak{m}_1}; \bxi)=0
\end{equation}
admits a unique solution in the $\theta$ variable in a slightly enlarged interval of the $\theta$ support of $\mf{a}^{(1)}$ for some $2<d_2< \mf{d}_1$, or that \eqref{230212e2_74} holds for $d_2=2$. We proceed in essentially the same way as in the first step, with one difference that we explain now. \\

Let $\theta_{d_2-1}=\theta_{d_2-1}(\circu_{\mf{m}_1})$ be the unique solution to \eqref{230212e2_75}. Similarly to \eqref{230115e3_17}, we define 
$\Psi_{d_2-1, \iota}(
\circu_{
\mf{m}_1
}
)$ for $\iota=1, \dots, d$, and similarly to \eqref{221119e4_24} we make the change of variables 
\begin{equation}\label{230212e2_76}
    \Psi_{d_2-1, d}(
    \circu_{
    \mf{m}_1
    }
    )\mapsto v_d, \dots, \Psi_{d_2-1, d_2}(
    \circu_{
    \mf{m}_1
    }
    )\mapsto v_{d_2}, \Psi_{d_2-1, d_2-2}(
    \circu_{
    \mf{m}_1
    }
    )\mapsto v_{d_2-2}, \dots, \Psi_{d_2-1, 1}(
    \circu_{
    \mf{m}_1
    }
    )\mapsto v_1.
\end{equation}
The only difference is in Claim \ref{221124claim2_4}. In the current case, we have 
\begin{claim}\label{230212claim2_5}
    The change of variables in \eqref{230212e2_76} has a Jacobian comparable to $(\mf{s}_1)^{\frac{
    (d-\mathfrak{d}_1+1)(d-\mathfrak{d}_1)
    }{2}}$. Moreover, the range of $\circu_{
    \mf{m}_1
    }$, which is contained in $[-C, C]^{d-1}$ for some constant $C=C_\gamma$, will be transformed to a region contained in the rectangular box of dimensions
    \begin{equation}\label{230212e2_77}
        \Big( (
        \mathfrak{s}_1
        )^{d-
        \mathfrak{d}_1
        },  \dots,  (
        \mathfrak{s}_1
        )^{\mathfrak{d}_1-
        \mathfrak{d}_1
        },  1, \dots, 1\Big):=\mathfrak{D}(
        \mathfrak{d}_1, \mathfrak{s}_1
        )
    \end{equation}
    written in the order $(v_d, \dots, v_{d_2}, v_{d_2-2}, \dots, v_1)$. 
\end{claim}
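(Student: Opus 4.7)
The plan is to follow the proof of Claim \ref{221124claim2_4}, while carefully tracking how the scale factors $(s_{d_1})^{j-d_1}$ appearing in the coefficients of $\Phi_{\mathfrak{s}_1}$ propagate through the computation. The guiding observation is that $\Phi_{\mathfrak{s}_1}$ is obtained from an ordinary non-degenerate normal-form phase by the coefficient rescaling $u_j \mapsto (s_{d_1})^{d_1 - j} u_j$ for $j \in \{d_1, \dots, d\}$, so the non-degeneracy of $\gamma$ transfers to $\Phi_{\mathfrak{s}_1}$ up to this rescaling, with the new missing degree $d_2-1$ now playing the role of $\mathfrak{m}_0$ in Claim \ref{221124claim2_4}.

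For the Jacobian, I would write down the matrix $\left(\partial_{u_i}\Psi_{d_2-1, j}\right)_{i\ne \mathfrak{m}_1,\, j\ne d_2-1}$ as in Claim \ref{221124claim2_4}. The key point is that for each $i \geq d_1$ the corresponding column of this matrix carries an overall factor $(s_{d_1})^{i-d_1}$, since $u_i$ appears in $\Phi_{\mathfrak{s}_1}$ only in the combination $(s_{d_1})^{i-d_1} u_i$. Pulling out these scalings from the $d-d_1+1$ affected columns produces a prefactor
\begin{equation*}
\prod_{i=d_1}^d (s_{d_1})^{i-d_1} = (s_{d_1})^{(d-d_1)(d-d_1+1)/2},
\end{equation*}
which matches the claimed exponent. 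The residual determinant has exactly the same shape as the one in Claim \ref{221124claim2_4}: after performing the same column operations that absorb the chain-rule derivatives $\partial_{u_i}\theta_{d_2-1}$ into the $j=d_2$ row, it reduces to a nonzero multiple of the cinematic curvature determinant \eqref{Y_230330nondegenerate} evaluated at a nearby point, which is bounded away from zero by continuity provided $\chi$ is supported in a sufficiently small neighborhood of the origin.

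For the range statement, I would observe that for $j \geq d_1$, only those $i \geq j$ contribute to $\Psi_{d_2-1, j}$ after $j$-fold differentiation in $\theta$; since all such $i$ satisfy $i \geq d_1$, every polynomial term carries a factor of at least $(s_{d_1})^{i-d_1}\geq(s_{d_1})^{j-d_1}$. Combined with the analogous bound on the $O(|\theta|^{d+1})$ remainder, which inherits an extra factor of $(s_{d_1})^{d+1-d_1}$ from the substitution $\theta\mapsto s_{d_1}\theta$ performed in the previous step, this gives $|\Psi_{d_2-1, j}|\lesssim (s_{d_1})^{j-d_1}$ for $j \geq d_1$ and the trivial bound $|\Psi_{d_2-1, j}|\lesssim 1$ for $j \leq d_1 - 2$. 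Together these place the image of $[-C,C]^{d-1}$ inside a box of the claimed dimensions $\mathfrak{D}(\mathfrak{d}_1, \mathfrak{s}_1)$.

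The main technical obstacle is keeping the scaling bookkeeping organized under the chain rule, since $\theta_{d_2-1}=\theta_{d_2-1}(\circu_{\mathfrak{m}_1})$ depends implicitly on all of $\circu_{\mathfrak{m}_1}$. The proof of Claim \ref{221124claim2_4} already handles this essential difficulty via column operations that exploit $\Psi_{d_2-1, d_2-1}\equiv 0$, and the lower bound $|\Psi_{d_2-1, d_2}|\simeq 1$ coming from \eqref{230212e2_74} ensures that dividing by it in those operations produces no unwanted inverse powers of $s_{d_1}$. Thus the same computation carries through in the present setting with only the additional scaling factors to track, yielding both the Jacobian and range claims simultaneously.
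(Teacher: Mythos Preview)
Your proposal is correct and is precisely the approach the paper indicates: the paper's own proof simply states that Claim~\ref{230212claim2_5} is similar to Claim~\ref{221124claim2_4} and omits the details, and you have supplied them correctly by tracking how the scale factors $(\mathfrak{s}_1)^{i-\mathfrak{d}_1}$ attached to the high-index variables in $\Phi_{\mathfrak{s}_1}$ factor out of the Jacobian and control the range. Two cosmetic slips worth noting: the factors $(\mathfrak{s}_1)^{i-\mathfrak{d}_1}$ sit in the \emph{rows} indexed by $u_i$ (not columns), though this is immaterial for the determinant; and in the range argument the inequality should read $(s_{d_1})^{i-d_1}\le (s_{d_1})^{j-d_1}$ since $s_{d_1}\le 1$, which is in fact what you need for the bound $|\Psi_{d_2-1,j}|\lesssim (s_{d_1})^{j-d_1}$.
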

\begin{proof}
    The proof of Claim \ref{230212claim2_5} is similar to that of Claim \ref{221124claim2_4}, and therefore we leave it out. 
\end{proof}

After Claim \ref{230212claim2_5}, everything else remains the same as in the first step. We write down directly the output of this step: We have 
\begin{equation}
    \mathfrak{d}_2, \ \ \mathfrak{m}_2, \ \ \mathfrak{s}_2, \ \ \mathfrak{k}_2, \ \ \Phi_{\mathfrak{s}_2}(\theta; \circu_{\mathfrak{m}_2}; \bxi),
\end{equation}
satisfying 
\begin{equation}
    \mathfrak{d}_2< \mathfrak{d}_1, \ \ 
    \mathfrak{s}_2\in 
    \begin{cases}
    \{1\}, & \text{ if  } \mathfrak{d}_2=2;\\
    [2^{-\frac{\mathfrak{k}_1}{\mathfrak{d}_2}} , \frac{1}{2}] & \text{ if } \mathfrak{d}_2>2, 
    \end{cases}
    \ \ \text{ and }  2^{\mathfrak{k}_2}=2^{\mathfrak{k}_1} (\mathfrak{s}_2)^{\mathfrak{d}_2}.
\end{equation}
The left hand side of \eqref{230209e2_8} is now controlled by 
\begin{equation}
\begin{split}
    & (\mf{s}_1)^{-\frac{
    (d-\mathfrak{d}_1+1)(d-\mathfrak{d}_1)
    }{2p}} \prod_{\iota=1}^2 \Big( 
     \mathfrak{s}_{\iota} (\mathfrak{s}_{\iota})^{
    \frac{(\mathfrak{m}_{\iota}+2)(\mathfrak{m}_{\iota}-1)}{2p}
    } 
    \jac(\mathfrak{s}_{\iota})
    \Big) \\
    &
    \times \Big( 
    \sum_{\Theta_{\mathfrak{d}_2}}
    \Norm{
    \iint_{\R^2}
    \widehat{f}_{\Theta_{\mathfrak{d}_2}}(\bxi) 
    \bnorm{
    \int_{\R} e^{i 2^{\mathfrak{k}_2}
    \Phi_{\mathfrak{s}_2}(\theta; \circu_{\mathfrak{m}_2}; \bxi)}
    \mathfrak{a}^{(2)}(\theta; \circu_{\mathfrak{m}_2}; \bxi)d\theta
    }
    e^{i(\bfx\cdot \bxi)}d\bxi
    }_{L^p
    }^p\Big)^{\frac{1}{p}}.
\end{split}
\end{equation}
where 
\begin{equation}
    \jac(\mathfrak{s}_{\iota}):=(\mathfrak{s}_{\iota})^{
    \mathfrak{d}_{\iota}-1
    }
    (\mathfrak{s}_{\iota})^{
    -\frac{\mathfrak{d}_{\iota}-1}{p}
    }
\end{equation}
and $\mathfrak{a}^{(2)}$ is a smooth amplitude function whose support in the $\circu_{\mathfrak{m}_2}$ variables is contained in the rectangular box of dimension given by \eqref{230212e2_77} and centered at the origin.

\subsection{Outputs of the whole algorithm}

We will keep running the above algorithm, until we reach the following stopping conditions. Assume that we have finished the $j$-th iteration of the algorithm. We will terminate the algorithm if $\mathfrak{d}_j=2$ or $\mathfrak{k}_j=0$. Otherwise we will run another step. Note that each time we run the algorithm, the derivative degree $\mathfrak{d}_j$ decreases by at least one, and therefore the algorithm terminates within $d$ steps. \\

Let us assume that the algorithm terminates after the $J$-th step. We arrive at 
\begin{equation}\label{230212e2_82}
\begin{split}
    & \prod_{j=1}^{J-1} \pnorm{
    (\mf{s}_j)^{-\frac{
    (d-\mathfrak{d}_j+1)(d-\mathfrak{d}_j)
    }{2p}}
    }
    \prod_{j=1}^J \Big( 
     \mathfrak{s}_{j} (\mathfrak{s}_{j})^{
    \frac{(\mathfrak{m}_{j}+2)(\mathfrak{m}_{j}-1)}{2p}
    } 
    \jac(\mathfrak{s}_{j})
    \Big) \\
    &
    \times \Big( 
    \sum_{\Theta_{\mathfrak{d}_J}}
    \Norm{
    \iint_{\R^2}
    \widehat{f}_{\Theta_{\mathfrak{d}_J}}(\bxi) 
    \bnorm{
    \int_{\R} e^{i 2^{\mathfrak{k}_J}
    \Phi(\theta; \circu_{\mathfrak{m}_J}; \bxi)}
    \mathfrak{a}^{(J)}(\theta; \circu_{\mathfrak{m}_J}; \bxi)d\theta
    }
    e^{i(\bfx\cdot \bxi)}d\bxi
    }_{L^p
    }^p\Big)^{\frac{1}{p}}.
\end{split}
\end{equation}
Here we have done a change of variables to turn the phase function $\Phi_{
\mathfrak{s}_J
}$ to $\Phi$, and 
the support of $a^{(J)}$ in the $\circu_{
\mathfrak{m}_J
}$ variables is contained in the rectangular box of dimensions given by 
\begin{equation}
    \prod_{j=1}^{J-1} \mathfrak{D}(
    \mathfrak{d}_j, \mathfrak{s}_j
    )
\end{equation}
where the above product is defined component-wise. It remains to prove that \eqref{230212e2_82} can be controlled by the right hand side of  \eqref{230209e2_8}.

\subsection{Final reduction}

After the termination of the above algorithm, one could proceed to prove Proposition \ref{230329prop2_2} directly. However, for some technical reasons, we will carry out a further reduction. The goal of this section is to make a further reduction to the phase function in \eqref{230212e2_82} so that its missing degree becomes $1$. Recall that the algorithm terminates after $J$-th step. We therefore either have $\mathfrak{d}_J=2$ or $\mathfrak{k}_J=0$. In the latter case, the oscillatory integral in \eqref{230212e2_82} does not oscillate anymore, and one can bound \eqref{230212e2_82} directly via standard argument. We therefore assume that we are in the case $\mathfrak{d}_J=2$. \\

If we are in the case $m_{J}=1$, then we do not do anything in this step. Let us assume that we are in the case $m_{J}>1$. Write the phase function $\Phi$ in \eqref{230212e2_82} as
\begin{equation}
    \Phi(\theta; \circu_{\mathfrak{m}_J}; \bxi)=\theta\xi+\eta P(\theta; \circu_{\mathfrak{m}_J}).
\end{equation}
Note that we have
\begin{equation}
    |\frac{\partial^2\Phi}{\partial\theta^2}|\gtrsim 1 \ \ \text{ on } \mathrm{supp}_1(\mathfrak{a}^{(J)}).
\end{equation}
Then there exists some $c_0=c_0(\gamma)$ such that 
\begin{equation}
    |\frac{\partial^2\Phi}{\partial\theta^2}|\gtrsim 1 \ \ \text{ on } (1+c_0)\mathrm{supp}_1(\mathfrak{a}^{(J)}).
\end{equation}
We cut the $\theta$-support of $\mathfrak{a}^{(J)}$ into $O(1)$ many pieces, each of which is of length $c_1\ll c_0$. Denote by $a_{c_1}$ the corresponding cutoff functions adapted to each such smaller interval. We also cut the $\xi'$-support of $\mathfrak{a}^{(J)}$ into $O(1)$ many pieces $\Theta_0$ with length $c_2$, where $c_2$ is to be determined.\\

Fix a $\Theta_0$ and write our phase function $\Phi$ as $\eta(\theta(\xi'-\xi'_0)+\theta\xi'_0+P(\theta; \circu_{\mathfrak{m}_J}))$, where $\xi'_0$ is the centre of $\Theta_0$. We have two cases. In the first case, the first order derivative 
\begin{equation}\label{230527e4_97}
    \frac{\partial}{\partial \theta}(\theta\xi'_0+P(\theta; \circu_{\mathfrak{m}_J}))
\end{equation}
 does not admit any zero on $(1+c_0)\mathrm{supp}(a_{c_1})$.  In this case, we note that
 \begin{equation}
     |\frac{\partial}{\partial \theta}(\theta\xi'_0+P(\theta; \circu_{\mathfrak{m}_J})|\ge c_3
 \end{equation}
 for all $\theta$ in $(1+c_0)\mathrm{supp}(a_{c_1})$ and for some $c_3>0$. Then we pick $c_2\ll c_3$, so that the first order derivative $\partial_{\theta}\Phi$ never admits any zero. Integration by parts will give us the desired bounds.

Let us consider the more interesting case, namely, the first order derivative in \eqref{230527e4_97} has a (unique) zero on $(1+c_0)\mathrm{supp}(a_{c_1})$. Call the solution $\theta_1(\circu_{\mathfrak{m}_J})$. In this case, we do the change of variables $\theta\mapsto \theta+\theta_1(\circu_{\mathfrak{m}_J})$ and compute the new phase function: 
\begin{equation}\label{Y_230520_e1119}
    \Phi(\theta+\theta_1(\circu_{\mathfrak{m}_J}); \circu_{\mathfrak{m}_J}; \bxi)=\Phi(\theta_1(\circu_{\mathfrak{m}_J}); \circu_{\mathfrak{m}_J}; \bxi)+(\xi-\eta\xi'_0)\theta+\sum_{j=2}^d 
    \frac{\partial^j }{\partial \theta^j}\Phi(\theta_1(\circu_{\mathfrak{m}_J}); \circu_{\mathfrak{m}_J}; \bxi)\frac{\theta^j}{j!}+E(\theta; \circu_{\mathfrak{m}_J}; \bxi), 
\end{equation}
where
\begin{equation}
    |E(\theta; \bfu_{
    \mathfrak{m}_J
    }; \bxi)|=O(|\theta|^{d+1}).
\end{equation}
We then do the following change of variables:
\begin{equation}
    x\mapsto x-\theta_1(\circu_{\mathfrak{m}_J}) ,\ y\mapsto y-\eta P(\theta_1(\circu_{\mathfrak{m}_J}); \circu_{\mathfrak{m}_J})
\end{equation}
so that we may assume without loss of generality that the first term of \eqref{Y_230520_e1119} vanishes. Then we do another change of variables
\begin{equation}\label{Y_230520_e1149}
    \frac{\partial^j }{\partial \theta^j} P(\theta_1(\circu_{\mathfrak{m}_J}); \circu_{\mathfrak{m}_J}) \mapsto v_j
\end{equation}
for all $j=2,3,\dots,d$.\begin{claim}\label{Y_230520_claim}
  The Jacobian of the change of variables in \eqref{Y_230520_e1149} is comparable to $1$.
\end{claim}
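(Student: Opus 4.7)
The plan is to follow the template used in the proof of Claim \ref{221124claim2_4}: express $\partial_{u_i}\theta_1$ via implicit differentiation, split each column of the Jacobian into two pieces, and reduce the determinant via a co-factor expansion to a $d\times d$ determinant which matches the non-degeneracy condition inherited from the original curvature assumption.

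First, since $\theta_1=\theta_1(\circu_{\mathfrak{m}_J})$ is defined by $\xi'_0+\partial_\theta P(\theta_1;\circu_{\mathfrak{m}_J})=0$, implicit differentiation in $u_i$ (for $i\neq\mathfrak{m}_J$) gives
\begin{equation*}
    \partial_{u_i}\theta_1=-\frac{\partial_{u_i}\partial_\theta P(\theta_1;\circu_{\mathfrak{m}_J})}{\partial_\theta^2 P(\theta_1;\circu_{\mathfrak{m}_J})}.
\end{equation*}
The assumption $\mathfrak{d}_J=2$ guarantees $|\partial_\theta^2 P(\theta_1)|\simeq 1$ on the support of $\mathfrak{a}^{(J)}$, so the above quotient is well-behaved. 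By the chain rule, the entries of the $(d-1)\times(d-1)$ Jacobian are
\begin{equation*}
    \partial_{u_i}v_j=\partial_\theta^{j+1}P(\theta_1;\circu_{\mathfrak{m}_J})\cdot\partial_{u_i}\theta_1+\partial_{u_i}\partial_\theta^j P(\theta_1;\circu_{\mathfrak{m}_J}),\qquad i\neq\mathfrak{m}_J,\ \ j=2,\dots,d.
\end{equation*}

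Next, I would split each column as the sum of these two summands and expand the determinant by multilinearity. Any term containing two or more columns proportional to the common vector $(\partial_{u_i}\theta_1)_i$ vanishes, so at most one such column survives in each nontrivial summand. Plugging in the explicit formula for $\partial_{u_i}\theta_1$ and performing a co-factor expansion that augments the matrix with a top row built from $(\partial_\theta^2 P,\partial_\theta^3 P,\dots,\partial_\theta^{d+1} P)$ and a left column built from $(\partial_{u_i}\partial_\theta P(\theta_1))_{i\ne \mathfrak{m}_J}$, the full expression collapses to
\begin{equation*}
    \mathrm{Jacobian}=\pm\frac{1}{\partial_\theta^2 P(\theta_1;\circu_{\mathfrak{m}_J})}\det\begin{bmatrix}\partial_\theta^2 P & \partial_\theta^3 P & \cdots & \partial_\theta^{d+1} P\\ \partial_{u_{i_1}}\partial_\theta P & \partial_{u_{i_1}}\partial_\theta^2 P & \cdots & \partial_{u_{i_1}}\partial_\theta^{d} P\\ \vdots & \vdots & & \vdots\\ \partial_{u_{i_{d-1}}}\partial_\theta P & \partial_{u_{i_{d-1}}}\partial_\theta^2 P & \cdots & \partial_{u_{i_{d-1}}}\partial_\theta^{d} P\end{bmatrix}\bigg|_{\theta=\theta_1(\circu_{\mathfrak{m}_J})},
\end{equation*}
where $\{i_1,\dots,i_{d-1}\}=\{1,\dots,d\}\setminus\{\mathfrak{m}_J\}$.

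Finally, the $d\times d$ determinant displayed above is, up to transposition, exactly the non-degeneracy condition \eqref{Y_230330nondegenerate} applied to the phase $\eta P(\theta;\circu_{\mathfrak{m}_J})$ produced by the algorithm. Since every intermediate reduction in the algorithm consists of changes of variables with Jacobians comparable to $1$ (Claims \ref{221124claim2_4} and \ref{230212claim2_5}) together with anisotropic rescalings that preserve the class of non-degenerate phases, the non-degeneracy of the original curve $\gamma$ propagates to the current $P$. Combined with $|\partial_\theta^2 P(\theta_1)|\simeq 1$ and continuity (provided $\chi$ is supported in a sufficiently small neighbourhood of the origin), we conclude $|\mathrm{Jacobian}|\simeq 1$. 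The main obstacle is the bookkeeping step that verifies non-degeneracy is preserved through every iteration of the algorithm; once that invariance is in hand, the remainder is essentially a repetition of the determinantal manipulation in Claim \ref{221124claim2_4}.
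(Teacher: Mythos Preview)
Your proposal is correct and follows essentially the same approach as the paper's proof: both compute $\partial_{u_i}\theta_1$ by implicit differentiation of $\xi'_0+\partial_\theta P(\theta_1;\circu_{\mathfrak m_J})=0$, split the Jacobian entries $\partial_{u_i}v_j=\partial_\theta^{j+1}P\cdot\partial_{u_i}\theta_1+\partial_{u_i}\partial_\theta^j P$, and reduce via co-factor expansion to the same $d\times d$ determinant with first row $(\partial_\theta^2 P,\dots,\partial_\theta^{d+1}P)$ and remaining rows $(\partial_{u_i}\partial_\theta^j P)$, which is precisely the non-degeneracy condition \eqref{Y_230330nondegenerate} for the phase $P$ at the point $\theta_1$. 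If anything, you are more explicit than the paper in flagging that the non-degeneracy of $P$ must be inherited from that of the original $\gamma$ through the algorithm's changes of variables; the paper simply invokes \eqref{Y_230330nondegenerate} without further comment.
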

\begin{proof}[Proof of Claim \ref{Y_230520_claim}]
  We define
\begin{equation}
    \Psi_{1,\iota}(\circu_{\mathfrak{m}_J}):=(\frac{\partial^{\iota}}{\partial\theta^{\iota}}\Phi)(\theta_1(\circu_{\mathfrak{m}_J});\circu_{\mathfrak{m}_J};(\xi,1)),
\end{equation}
for $2\leq \iota \leq d+1$, and 
\begin{equation}
    \Psi_{1,1}(\circu_{\mathfrak{m}_J}):=(\frac{\partial\Phi}{\partial\theta})(\theta_1(\circu_{\mathfrak{m}_J});\circu_{\mathfrak{m}_J};(\xi,1))-\xi.
\end{equation}
Similar to the proof of Claim \ref{221124claim2_4}, we compute directly
\begin{equation}
    \partial_{u_i}v_j=\partial_{u_i}\Psi_{1,j}=\Psi_{1,j+1}\partial_{u_i}\theta_1+\partial_{u_i}\partial_{\theta}^i P(\theta_1(\circu_{\mathfrak{m}_J}); \circu_{\mathfrak{m}_J}),
\end{equation}
for all $i\ne \mathfrak m_J$, $2\le j\le d$. Taking a derivative in $u_i$ on both sides of the equation
\begin{equation}
    \xi'_0+\frac{\partial}{\partial\theta}P(\theta_1(\circu_{\mathfrak{m}_J}); \circu_{\mathfrak{m}_J})=0,
\end{equation}
we get
\begin{equation}
    \partial_{u_i}\theta_1=-\frac{\partial_{u_i}\partial_{\theta}P(\theta_1(\circu_{\mathfrak{m}_J}); \circu_{\mathfrak{m}_J})}{\Psi_{1,2}}.
\end{equation}
We consider the following $d\times d$ determinant:
\begin{equation}    \det\begin{bmatrix}
        \Psi_{1,2} & \Psi_{1,3} & \dots & \Psi_{1,d+1}\\
        \partial_{u_1}
        \partial_{\theta}
        P(\theta_1(\circu_{\mathfrak{m}_J}); \circu_{\mathfrak{m}_J})
         & \partial_{u_1}
         \partial_{\theta}^2
         P(\theta_1(\circu_{\mathfrak{m}_J}); \circu_{\mathfrak{m}_J})
         &\dots & \partial_{u_1}
         \partial_{\theta}^d
         P(\theta_1(\circu_{\mathfrak{m}_J}); \circu_{\mathfrak{m}_J})   \\
        \partial_{u_{\mathfrak m_J-1}}
        \partial_{\theta}
        P(\theta_1(\circu_{\mathfrak{m}_J}); \circu_{\mathfrak{m}_J})&\partial_{u_{\mathfrak m_J-1}}
        \partial_{\theta}^2
        P(\theta_1(\circu_{\mathfrak{m}_J}); \circu_{\mathfrak{m}_J})
        &\dots & \partial_{u_{\mathfrak m_J-1}}
        \partial_{\theta}^d
        P(\theta_1(\circu_{\mathfrak{m}_J}); \circu_{\mathfrak{m}_J})\\
        \vdots & \vdots & \ddots & \vdots \\
        \partial_{u_{\mathfrak m_J+1}}
        \partial_{\theta}
        P(\theta_1(\circu_{\mathfrak{m}_J}); \circu_{\mathfrak{m}_J})&\partial_{u_{\mathfrak m_J+1}}
        \partial_{\theta}^2
        P(\theta_1(\circu_{\mathfrak{m}_J}); \circu_{\mathfrak{m}_J})
        &\dots & \partial_{u_{\mathfrak m_J+1}}
        \partial_{\theta}^d
        P(\theta_1(\circu_{\mathfrak{m}_J}); \circu_{\mathfrak{m}_J})\\
        \vdots & \vdots & \ddots & \vdots \\
        \partial_{u_d}
        \partial_{\theta}
        P(\theta_1(\circu_{\mathfrak{m}_J}); \circu_{\mathfrak{m}_J}) & \partial_{u_d}
        \partial_{\theta}^2
        P(\theta_1(\circu_{\mathfrak{m}_J}); \circu_{\mathfrak{m}_J}) & \dots & \partial_{u_d}
        \partial_{\theta}^d
        P(\theta_1(\circu_{\mathfrak{m}_J}); \circu_{\mathfrak{m}_J}).        
\end{bmatrix}
\end{equation}
On the one hand, this is non-singular because of curvature condition \eqref{Y_230330nondegenerate}. On the other hand, one can check this is just comparable to the Jacobian we need to compute. We leave out the details since this is similar to the proof of Claim \ref{221124claim2_4}.
\end{proof}

After this change of variables, we now assume without loss of generality that the missing degree of the phase function $\Phi$ is $1$. That is, 
\begin{equation}
\Phi(\theta;\bfv;\bxi)=\eta \gamma(\theta;\bfv)+\theta \xi,
\end{equation}
where $\gamma(\theta;\bfv)=\sum_{j=2}^{d}v_j\frac{\theta^j}{j!}+O(|\theta|^{d+1})$ and $|v_2|\simeq 1$.


\section{Proof of Proposition \ref{230329prop2_2}}
Recall that our goal is to prove Proposition \ref{230329prop2_2}. We will see that picking $p_d=d(d+1)$  is more than enough. So far we have reduced it to estimating \eqref{230212e2_82}. That the above algorithm terminates after the $J$-th step means we are either in the case $\mathfrak{k}_J=0$ or $\mathfrak{d}_J=2$. These two cases will be handled differently. \\

Let us first work with the case $\mathfrak{k}_J=0$. In this case we have the trivial estimate
\begin{equation}
    \Norm{
    \iint_{\R^2}
    \widehat{f}_{\Theta_{\mathfrak{d}_J}}(\bxi)
    \bnorm{
    \int_{\R} e^{i
    \Phi(\theta; \bfv; \bxi)}
    \mathfrak{a}^{(J)}(\theta; \bfv; \bxi)d\theta
    }
    e^{i(\bfx\cdot \bxi)}d\bxi
    }_{L^p
    }\lesim \Norm{f_{\Theta_{\mathfrak{d}_J}}}_p.
\end{equation}
As a consequence, 
\begin{equation}
    \eqref{230212e2_82} \lesim \prod_{j=1}^{J-1} \pnorm{
    (\mf{s}_j)^{-\frac{
    (d-\mathfrak{d}_j+1)(d-\mathfrak{d}_j)
    }{2p}}
    }\prod_{j=1}^J \Big( 
     \mathfrak{s}_{j} (\mathfrak{s}_{j})^{
    \frac{(\mathfrak{m}_{j}+2)(\mathfrak{m}_{j}-1)}{2p}
    } 
    \jac(\mathfrak{s}_{j})
    \Big) 
    \pnorm{
    \sum_{
    \Theta_{\mathfrak{d}_J}
    } \norm{
    f_{
    \Theta_{\mathfrak{d}_J}}
    }_p^p
    }^{1/p}, 
\end{equation}
where we recall the definitions in \eqref{Y_230520} and \eqref{230208e2_51}. By reverting the changes of variables in \eqref{230411e3_63} and by interpolation between $L^2$ and $L^{\infty}$, we obtain 
\begin{equation}
    \pnorm{
    \sum_{
    \Theta_{\mathfrak{d}_J}
    } \norm{
    f_{
    \Theta_{\mathfrak{d}_J}}
    }_p^p
    }^{1/p} 
    \lesim 
    \norm{f}_p
    \prod_{j=1}^J (\jac(
    \mathfrak{s}_j
    ))^{-1} , 
\end{equation}
for every $p\ge 2$. It remains to prove 
\begin{equation}
    \prod_{j=1}^{J-1} \pnorm{
    (\mf{s}_j)^{-\frac{
    (d-\mathfrak{d}_j+1)(d-\mathfrak{d}_j)
    }{2p}}
    }\prod_{j=1}^J \Big( 
     \mathfrak{s}_{j} (\mathfrak{s}_{j})^{
    \frac{(\mathfrak{m}_{j}+2)(\mathfrak{m}_{j}-1)}{2p}
    } 
    \Big) \lesim 2^{-\frac{dk}{p}}.
\end{equation}
Note that in this case, $2^{-k}= \prod_{j=1}^J(\mathfrak{s}_j)^{\mathfrak{d}_j}$, thus 
\begin{equation}  \prod_{j=1}^J(\mathfrak{s}_j)^{d\mathfrak{d}_j}= 2^{-dk}, 
\end{equation}
and it suffices to prove
\begin{equation}
     p+\frac{(m_j+2)(m_j-1)-(d-\mathfrak{d}_j+1)(d-\mathfrak{d}_j)}{2} \ge d\mathfrak{d}_j
\end{equation}
for every $p\ge d(d+1)$.
We only need to show
\begin{equation}
    p-\frac{d(d+1)}{2}\ge \frac{\mathfrak{d}_j^2}{2}-\frac{\mathfrak{d}_j}{2}.
\end{equation}
But this is true since $p\ge d(d+1)$ and $\mathfrak{d}_j\leq d+1$.\\

It remains to handle the case $\mathfrak{d}_J=2$.  Let $\Phi(\theta;\bfv; \bxi)=\theta\xi+\eta \gamma(\theta; \bfv)$. Let $\theta_1(\bfv; \bxi)$ be the unique solution to 
\begin{equation}
    \frac{\partial}{\partial \theta} \Phi(\theta; \bfv
    ; \bxi)=0
\end{equation}
in the $\theta$ variable. Define 
\begin{equation}
    \Psi(\bfv
    ; \bxi):=\Phi(\theta_1(\bfv
    ; \bxi); \bfv; \bxi). 
\end{equation}
By the stationary phase principle, 
\begin{align}
    \eqref{230212e2_82} &\lesim 2^{
    -\frac{\mathfrak{k}_J}{2}
    }
    \prod_{j=1}^{J-1} \pnorm{
    (\mf{s}_j)^{-\frac{
    (d-\mathfrak{d}_j+1)(d-\mathfrak{d}_j)
    }{2p}}
    }    \prod_{j=1}^J \Big( 
     \mathfrak{s}_{j} (\mathfrak{s}_{j})^{
    \frac{(\mathfrak{m}_{j}+2)(\mathfrak{m}_{j}-1)}{2p}
    } 
    \jac(\mathfrak{s}_{j})
    \Big)\\
    &\times 
    \Big( 
    \sum_{\Theta_{\mathfrak{d}_J}}
    \Norm{
    \iint_{\R^2}
    \widehat{f}_{\Theta_{\mathfrak{d}_J}}(\bxi) 
    e^{i 2^{\mathfrak{k}_J}
    \Psi(\bfv; \bxi)}
    e^{i(\bfx\cdot \bxi)}d\bxi
    }_{L^p
    }^p\Big)^{\frac{1}{p}}.\label{Y_230526}
\end{align}

\begin{claim}\label{221126claim3_1}
We have the following curvature condition:  
\begin{equation}
    \det\begin{bmatrix}
    \partial_{v_2}\partial^2_{\xi}\Psi & \dots & \partial_{v_2} \partial_{\xi}^{d}\Psi\\
    \vdots & \ddots & \vdots\\
    \partial_{v_d}\partial^2_{\xi}\Psi & \dots & \partial_{v_d} \partial_{\xi}^{d}\Psi
    \end{bmatrix}(\bfv; \bxi)\neq 0,
\end{equation}
for any $\bfv, \bxi$ under consideration. 
\end{claim}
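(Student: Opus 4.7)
The plan is to exhibit a base point $(\bfv;\bxi)$ at which the matrix is upper triangular with explicit, nonzero diagonal, and then extend to the whole support by continuity. I take as base point any $(\bfv;\bxi)$ with $|\eta|\simeq 1$, $|v_2|\simeq 1$, and $\xi=0$; the defining relation $\xi+\eta\,\partial_\theta\gamma(\theta_1;\bfv)=0$ together with $\partial_\theta\gamma(0;\bfv)=0$ then forces $\theta_1=0$, regardless of the values of $v_3,\dots,v_d$.

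Since $\theta_1$ is a critical point of $\theta\mapsto\Phi(\theta;\bfv;\bxi)$, the envelope theorem gives
\begin{equation}
\partial_{v_i}\Psi(\bfv;\bxi)=\eta\,\partial_{v_i}\gamma(\theta_1(\bfv;\bxi);\bfv),\qquad i=2,\dots,d,
\end{equation}
so the $(i,j)$-entry of the matrix becomes $\eta\,\partial_\xi^j\bigl[\partial_{v_i}\gamma(\theta_1(\xi);\bfv)\bigr]$. From the normal form $\gamma(\theta;\bfv)=\sum_{j=2}^d v_j\theta^j/j!+O(|\theta|^{d+1})$ we read off $\partial_{v_i}\gamma(\theta;\bfv)=\theta^i/i!+O(|\theta|^{d+1})$. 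In particular, for $i\le d$, every derivative of $\phi_i(\theta):=\partial_{v_i}\gamma(\theta;\bfv)$ of order strictly less than $i$ vanishes at $\theta=0$.

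Now apply Faà di Bruno to $\phi_i$ composed with $\theta_1(\xi)$, evaluated at $\xi=0$ (so $\theta_1=0$). The result is a linear combination of products $\phi_i^{(k)}(0)\cdot(\text{polynomial in }\theta_1^{(\ell)}(0))$ over $1\le k\le j$. For $2\le j<i\le d$ all of these coefficients $\phi_i^{(k)}(0)$ vanish, so the entry is $0$. For $j=i$ only the leading ``all-the-way-down'' term survives, giving $\phi_i^{(i)}(0)\,(\partial_\xi\theta_1|_{\xi=0})^i$. Implicit differentiation of $\xi+\eta\,\partial_\theta\gamma(\theta_1;\bfv)=0$ with $\partial_\theta^2\gamma(0;\bfv)=v_2$ yields $\partial_\xi\theta_1|_{\xi=0}=-1/(\eta v_2)$, so the diagonal entry is
\begin{equation}
\eta\cdot\bigl(-1/(\eta v_2)\bigr)^i=(-1)^i/\bigl(\eta^{i-1}v_2^i\bigr),
\end{equation}
of magnitude $\simeq 1$. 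Hence in the ordering $i,j\in\{2,\dots,d\}$ the matrix is upper triangular with nonzero diagonal, and its determinant at the base point equals $\prod_{i=2}^d(-1)^i\eta^{1-i}v_2^{-i}\ne 0$.

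Finally, the entries depend smoothly on $(\bfv;\bxi)$, and by the design of the reduction algorithm (in which $\chi$ is supported in a sufficiently small neighborhood of the origin) the support of $\mathfrak{a}^{(J)}$ may be taken arbitrarily close to the base configuration. Continuity then upgrades the non-vanishing from the base point to all $(\bfv;\bxi)$ under consideration. The only substantive step is the vanishing of $\phi_i^{(k)}(0)$ for $k<i$, which is what produces the upper-triangular structure; I expect this combinatorial piece to be the main checkpoint, with the rest being routine bookkeeping.
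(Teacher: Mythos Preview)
Your approach is correct and genuinely different from the paper's. The paper computes the determinant globally: using $\partial_\xi\Psi=\theta_1$ and the identities $\partial_{v_i}\theta_1=(\partial_{v_i}\partial_\theta\gamma)\,\partial_\xi\theta_1$, it performs a sequence of row operations that factor out powers of $\partial_\xi\theta_1$ and reduce the matrix exactly to the non-degeneracy determinant \eqref{Y_230330nondegenerate} evaluated at $(\theta_1;\bfv)$, times $(\partial_\xi\theta_1)^{d(d+1)/2}$. This identity holds at \emph{every} point of the support, so the claim follows from the standing cinematic curvature hypothesis without any further localization. Your route instead specializes to the slice $\xi=0$ (where $\theta_1=0$), uses the vanishing $\phi_i^{(k)}(0)=0$ for $k<i$ coming from the normal form to obtain an upper-triangular matrix with diagonal entries $(-1)^i/(\eta^{i-1}v_2^{\,i})$, and then appeals to continuity. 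The upside of your argument is that it is shorter and avoids the row-reduction bookkeeping; the upside of the paper's argument is that it gives an exact formula valid throughout the support and ties the claim directly back to the main curvature assumption, which is also what is needed in the variable-coefficient setting of Section~8.

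One point in your continuity step is imprecise: shrinking the original cutoff $\chi$ does \emph{not} shrink the $\xi'$-support of $\mathfrak a^{(J)}$, since $\xi'$ is a frequency variable. What actually forces $|\xi'|$ to be small after the final reduction is the parameter $c_2$ in Section~4.4 (the length of the pieces $\Theta_0$), and the recentering $\xi'\mapsto\xi'-\xi_0'$ that accompanies the shift $\theta\mapsto\theta+\theta_1$. Since $c_2$ is a free constant chosen $\ll c_3$, you may take it as small as needed so that your base-point computation on $\{\xi=0\}$ (where the determinant has magnitude $\simeq 1$ uniformly in $\bfv$) propagates to the whole support by compactness. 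With that correction, your argument goes through.
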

\begin{proof}[Proof of Claim \ref{221126claim3_1}] 
Without loss of generality, we assume $\eta=1$. Note that 
\begin{equation}
\partial_{\xi}\Psi(\bfv;\bxi)=\theta_1(\bfv;\bxi),   
\end{equation}
and
\begin{equation}
\partial_{v_i}\Psi(\bfv;\bxi)=\partial_{v_i}\gamma(\theta_1; \bfv).
\end{equation}
Taking a derivative in $\xi$ on both sides of 
\begin{equation}
    \xi+\partial_\theta\gamma(\theta_1; \bfv)=0,
\end{equation}
we get 
\begin{equation}
    \partial_{\xi}\theta_1=-\frac{1}{\partial^2_{\theta}\gamma(\theta_1; \bfv)}.
\end{equation}
Similarly, we have 
\begin{equation}
    \partial_{v_i}\theta_1=\partial_{v_i}\partial_{\theta}\gamma(\theta_1; \bfv)\partial_{\xi}\theta_1.
\end{equation}
So the required curvature condition is equal to:
\begin{align}
    &\det\begin{bmatrix}
        \partial_{v_2}\partial_{\xi}\theta_1(\bfv;\bxi) & \dots & \partial_{v_2}\partial_{\xi}^{d-1}
        \theta_1(\bfv;\bxi)\\
        \vdots & \ddots & \vdots\\
        \partial_{v_d}\partial_{\xi}\theta_1(\bfv;\bxi) & \dots & \partial_{v_d}\partial_{\xi}^{d-1}\theta_1(\bfv;\bxi)\\
    \end{bmatrix}\\
    &=\det\begin{bmatrix}
        \partial_{\xi}(\partial_{v_2}
        \partial_{\theta}
        \gamma(\theta_1; \bfv)\partial_{\xi}\theta_1)  & \dots & \partial_{\xi}^{d-1}(\partial_{v_2}
        \partial_{\theta}
        \gamma(\theta_1; \bfv)\partial_{\xi}\theta_1)\\
        \vdots & \ddots & \vdots \\
        \partial_{\xi}(\partial_{v_d} \partial_{\theta}
        \gamma(\theta_1; \bfv)\partial_{\xi}\theta_1) & \dots & \partial_{\xi}^{d-1}(\partial_{v_d}
        \partial_{\theta}\gamma(\theta_1; \bfv)\partial_{\xi}\theta_1)\\        \end{bmatrix}.
\end{align}
Let us consider the $d\times d$ determinant:
\begin{equation}\label{230527e5_21}
    \det\begin{bmatrix}
        \partial_{\theta}^2
        \gamma(\theta_1; \bfv)\partial_{\xi}\theta_1 & \partial_{\xi}(\partial_{\theta}^2
        \gamma(\theta_1; \bfv)\partial_{\xi}\theta_1) & \dots & \partial_{\xi}^{d-1}(\partial_{\theta}^2
        \gamma(\theta_1; \bfv)\partial_{\xi}\theta_1)\\
        \partial_{v_2}
        \partial_{\theta}
        \gamma(\theta_1; \bfv)\partial_{\xi}\theta_1 & \partial_{\xi}(\partial_{v_2}
        \partial_{\theta}
        \gamma(\theta_1; \bfv)\partial_{\xi}\theta_1)  & \dots & \partial_{\xi}^{d-1}(\partial_{v_2}
        \partial_{\theta}\gamma(\theta_1; \bfv)\partial_{\xi}\theta_1)\\
        \vdots & \vdots & \ddots & \vdots \\
        \partial_{v_d}
        \partial_{\theta}
        \gamma(\theta_1; \bfv)\partial_{\xi}\theta_1 & \partial_{\xi}(\partial_{v_d}
        \partial_{\theta}
        \gamma(\theta_1; \bfv)\partial_{\xi}\theta_1) & \dots & \partial_{\xi}^{d-1}(\partial_{v_d}
        \partial_{\theta}\gamma(\theta_1; \bfv)\partial_{\xi}\theta_1)\\        
        \end{bmatrix}.
    \end{equation}
On the one hand, by elementary row operations, we can use the first row to make the second row only have $\partial_{v_i}
\partial_{\theta}^2
\gamma(\theta_1; \bfv)(\partial_{\xi}\theta_1)^2$. Repeating this trick, we have this determinant is equal to
\begin{equation}
    \det\begin{bmatrix}
        \partial_{\theta}^2
        \gamma(\theta_1; \bfv) & \partial_{\theta}^3\gamma(\theta_1; \bfv) & \dots & \partial_{\theta}^{d+1}\gamma(\theta_1; \bfv)\\
        \partial_{v_2}\partial_{\theta}
        \gamma(\theta_1; \bfv) & \partial_{v_2}\partial^2_{\theta}
        \gamma(\theta_1; \bfv) & \dots & \partial_{v_2}
        \partial_{\theta}^d
        \gamma(\theta_1; \bfv)\\
        \vdots & \vdots & \ddots & \vdots\\
        \partial_{v_d}
        \partial_{\theta}
        \gamma(\theta_1; \bfv) & \partial_{v_d}
        \partial_{\theta}^2
        \gamma(\theta_1; \bfv) & \dots & \partial_{v_d}
        \partial_{\theta}^d
        \gamma(\theta_1; \bfv)\\        \end{bmatrix}(\partial_{\xi}\theta_1)^{\frac{(d+1)d}{2}}.
\end{equation}
On the other hand, using
\begin{equation}
    1+\partial_{\theta}^2\gamma(\theta_1; \bfv)\partial_{\xi}\theta_1=0
\end{equation}
we see the entries in the first row in \eqref{230527e5_21} is $0$ except the first term.  
 Since we are in the case $d_{1}=2$, this means $|\frac{\partial^2}{\partial\theta^2}\Phi|\gtrsim 1$, thus we have $|\partial_{\xi}\theta_1|\simeq 1$. Thus by the non-degeneracy condition \eqref{Y_230330nondegenerate}, we have finished the proof.

\end{proof}

After verifying the curvature condition in Claim \ref{221126claim3_1}, we are ready to prove the following decoupling estimate. Let $\Theta\subset \R$ be an interval of length $2^{-k_0/d}$. Let $\mathfrak{b}_{\Theta}$ be a frequency projection adapted to the interval $\Theta$. Denote 
\begin{equation}
    \widehat{f}_{\Theta}(\bxi):=\widehat{f}(\bxi)\cdot \mathfrak{b}_{\Theta}(\xi'). 
\end{equation}
Note this notation is different from \eqref{Y_230520} since we have no rescaling here.
\begin{lemma}\label{221126lemma3_2}
For every $\epsilon>0$ and $p\geq d(d+1)$, it holds that 
\begin{equation}
\begin{split}
& \Norm{
\iint_{\R^2} \widehat{f}(\bxi)
e^{i2^{k_0} \Psi(\bfv; \bxi)}
 e^{i\bfx\cdot \bxi} d\bxi 
}_{L^p_{\bfx; \bfv}(\R^2\times \R^{d-1})}\\ 
    & \lesim_{p, d, \epsilon} 2^{\epsilon k_0} 2^{\frac{k_0}{d}(1-\frac{1}{p}-\frac{d(d+1)}{2p})}
\Big(\sum_{\ell(\Theta)=2^{-k_0/d}}\Norm{
\iint_{\R^2} \widehat{f}_{\Theta}(\bxi)
e^{i2^{k_0} \Psi(\bfv; \bxi)}
 e^{i\bfx\cdot \bxi} d\bxi
}_{L^p_{\bfx; \bfv}(\R^2\times \R^{d-1})}^p \Big)^{\frac{1}{p}}.
\end{split}
\end{equation}
\end{lemma}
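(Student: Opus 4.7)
The plan is to deduce the estimate from the Bourgain--Demeter $\ell^2(L^{p_0})$ decoupling theorem for a non-degenerate curve in $\R^d$, with $p_0 = d(d+1)$, and then to pass from $p_0$ to general $p \ge p_0$ by Riesz--Thorin interpolation against the trivial $L^\infty$ bound.

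First, I would perform standard localization reductions: restrict $\eta$ to a small interval near some $\eta_0$ with $|\eta_0|\simeq 1$, and cover $\bfv$-space by small cells on which $\Psi(\bfv;\bxi)$ is well-approximated by its first-order Taylor expansion in $\bfv$ about a center $\bfv_0$. On each such cell, the oscillatory integral on the left-hand side is, up to a bounded phase error, an extension operator for the curve
\begin{equation}
    \Gamma_{\bfv_0,\eta_0}(\xi) := \bigl(\xi,\, 2^{k_0}\partial_{v_2}\Psi(\bfv_0;\xi,\eta_0),\, \ldots,\, 2^{k_0}\partial_{v_d}\Psi(\bfv_0;\xi,\eta_0)\bigr) \in \R^d,
\end{equation}
with the $d$ dual-space coordinates given by $(x,\bfv-\bfv_0)$. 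Expanding the torsion determinant $\det[\partial_\xi\Gamma,\partial_\xi^2\Gamma,\ldots,\partial_\xi^d\Gamma]$ along its first row yields $(2^{k_0})^{d-1}$ times the $(d-1)\times(d-1)$ determinant appearing in Claim \ref{221126claim3_1}, which is nonzero uniformly on the support. Hence $\Gamma_{\bfv_0,\eta_0}$ has nonvanishing torsion of order $d$, uniformly in the frozen parameters.

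Second, I would apply Bourgain--Demeter's $\ell^2(L^{p_0})$ decoupling theorem to this curve at scale $\delta:=2^{-k_0/d}$, obtaining
\begin{equation}
    \|F\|_{L^{p_0}(\R^{d+1})} \lesim_{\epsilon} \delta^{-\epsilon}\Big(\sum_\Theta \|F_\Theta\|_{L^{p_0}(\R^{d+1})}^2\Big)^{1/2},
\end{equation}
where $F$ denotes the full oscillatory integral on the left-hand side and $F_\Theta$ its $\Theta$-piece (corresponding to the $\delta$-arc of $\Gamma_{\bfv_0,\eta_0}$ over $\Theta$). H\"older's inequality over the $N:=\delta^{-1}$ intervals $\Theta$ upgrades this to $\ell^{p_0}(L^{p_0})$ decoupling with constant $\delta^{-\epsilon}N^{1/2-1/p_0}$.

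Third, to reach general $p \ge p_0$, I would interpolate the $\ell^{p_0}(L^{p_0})$ estimate with the trivial $\ell^1(L^\infty)$ bound $\|F\|_\infty \le \sum_\Theta \|F_\Theta\|_\infty \le N\max_\Theta\|F_\Theta\|_\infty$, which has decoupling constant $N$. Setting the Riesz--Thorin parameter $\theta=1-p_0/p$ so that $1/p=(1-\theta)/p_0$, one obtains the overall loss
\begin{equation}
    \delta^{-\epsilon}\cdot N^{(1-\theta)(1/2-1/p_0)+\theta} = 2^{\epsilon k_0}\cdot 2^{(k_0/d)(1-1/p-d(d+1)/(2p))},
\end{equation}
matching the right-hand side of the lemma. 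The main technical obstacle lies in rigorously justifying the reduction to the model extension operator on each $\bfv$-cell: the nonlinear corrections of $\Psi$ in $\bfv$ must be shown to produce only rapidly-decaying tails at the relevant scales, and the summability over $\bfv$-cells and $\Theta$-intervals must be recombined carefully. This is accomplished by a wave-packet analysis in the spirit of the framework the paper follows, leveraging the uniform nonvanishing torsion guaranteed by Claim \ref{221126claim3_1}.
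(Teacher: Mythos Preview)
Your proposal is correct and takes essentially the same approach as the paper, which simply defers to Theorem~1.4 of \cite{BHS20} for $d=2$ and to the Pramanik--Seeger bootstrapping argument \cite{PS07} combined with the Bourgain--Demeter--Guth decoupling inequalities \cite{BDG16} for general $d$. The ``main technical obstacle'' you flag---rigorously reducing to the model extension operator on each $\bfv$-cell---is exactly what the Pramanik--Seeger iteration accomplishes (a single-step linearization is not enough; one must decouple to an intermediate scale, relinearize, and repeat), and the sharp $\ell^2$ decoupling for nondegenerate curves in $\R^d$ at the critical exponent $p_0=d(d+1)$ is due to \cite{BDG16}, not Bourgain--Demeter alone.
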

\begin{proof}[Proof of Lemma \ref{221126lemma3_2}]
The case $d=2$ is a special case of Theorem 1.4 of \cite{BHS20}. The general case $d\ge 3$ can be proven similarly, by combining the bootstrapping argument in Pramanik and Seeger \cite{PS07} and the decoupling inequalities of Bourgain, Demeter and Guth \cite{BDG16}. 
\end{proof}

Continuing the computation in \eqref{Y_230526}, we need to estimate the term

\begin{equation}
    2^{-\frac{\mathfrak{k}_J}{2}}\Norm{
    \iint_{\R^2}
    \widehat{f}_{\Theta_{\mathfrak{d}_J}}(\bxi)
    e^{i(\bfx\cdot \bxi+2^{\mathfrak{k}_J}
    \Psi(\bfv; \bxi))} 
    \mathfrak{a}^{(J)}(\theta_1; \bfv; \bxi)
    d\bxi
    }_{L^p
    (\R^2\times \prod_{j=1}^J \mathfrak{D}(
    \mathfrak{d}_j, \mathfrak{s}_j
    ))}.
\end{equation}
Denote the term in the integral by $T_{\Theta_{\mathfrak{d}_J}}f$, and write 
\begin{equation}
    2^{J_i}=2^{\mathfrak{k}_J} \prod_{j:\mathfrak{d}_j\leq i}(s_j)^{i-\mathfrak{d}_j},
\end{equation}
and 
$D_d=\frac{d(d+1)}{2}+1$. We consider two cases $J_d>0$ and $J_d\le 0$ separately. The former case is more interesting. 

Let us assume that $J_d>0$. Then by Lemma \ref{221126lemma3_2}, we have
\begin{equation}
    \Norm{T_{\Theta_{\mathfrak{d}_J}}f}_{L^p}\lesim 2^{\frac{J_d}{d}(1-\frac{D_d}{p})}\Big(\sum_{\ell(\Theta_d)=2^{-\frac{J_d}{d}}} \Norm{T_{\Theta_d}f}_p^p\Big)^{\frac{1}{p}}.
\end{equation}
for all $p\ge d(d+1)$.  Now for each $\Theta_d$ with $\ell(\Theta_d)=2^{-\frac{J_d}{d}}$, we  do not see the curvature in the last variable and hope to use Lemma \ref{221126lemma3_2} again, with   $d$ replaced by $d-1$. To this end, we need to check the following determinant condition:
\begin{equation}
    \det\begin{bmatrix}
    \partial_{v_2}\partial^2_{\xi}\Psi(\bfv; \bxi) & \dots & \partial_{v_2} \partial_{\xi}^{d-1}\Psi(\bfv; \bxi)\\
    \vdots & \ddots & \vdots\\
    \partial_{v_{d-1}}\partial^2_{\xi}\Psi(\bfv; \bxi) & \dots & \partial_{v_{d-1}} \partial_{\xi}^{d-1}\Psi(\bfv; \bxi)\\
    \end{bmatrix}\neq 0,
\end{equation}
for all $(\bfv,\bxi)$ under consideration. One can compute the curvature condition of $\gamma(\theta;\bfv)$ for the first $d-2$ parameters at $0$, which is comparable to $v_2$ and we know that $|v_2|\simeq 1$. Thus we get
\begin{equation}
    \Norm{T_{\Theta_d}f}_p\lesim 2^{(\frac{J_{d-1}}{d-1}-\frac{J_d}{d})(1-\frac{D_{d-1}}{p})}\Big(\sum_{\ell(\Theta_{d-1})=2^{-\frac{J_{d-1}}{d-1}}} \Norm{T_{\Theta_{d-1}}f}_p^p\Big)^{\frac{1}{p}}.
\end{equation}
Continue this process, we finally obtain 
\begin{equation}
    \Norm{T_{\Theta_{\mathfrak{d}_J}}f}_{L^p}\lesim 2^{\frac{J_d}{d}(1-\frac{D_d}{p})}\cdot 2^{(\frac{J_{d-1}}{d-1}-\frac{J_d}{d})(1-\frac{D_{d-1}}{p})}\cdots 2^{(\frac{J_2}{2}-\frac{J_3}{3})(1-\frac{D_2}{p})} \Big(\sum_{\ell(\Theta_{2})=2^{-\frac{J_2}{2}}} \Norm{T_{\Theta_2}f}_p^p \Big)^{\frac{1}{p}}.
\end{equation}
After reaching this scale, there is no essential oscillation in the integral. By Young's inequality, we have 
\begin{equation}
    \Norm{T_{\Theta_2}f}_p\lesim \Norm{f_{\Theta_2}}_p.
\end{equation}
Putting everything together, we need to show
\begin{equation}
\prod_{j=1}^{J-1} \pnorm{
    (\mf{s}_j)^{-\frac{
    (d-\mathfrak{d}_j+1)(d-\mathfrak{d}_j)
    }{2p}}
    }  \prod_{j=1}^J \Big( 
     \mathfrak{s}_{j} (\mathfrak{s}_{j})^{
    \frac{(\mathfrak{m}_{j}+2)(\mathfrak{m}_{j}-1)}{2p}
    } 
    \Big) 2^{-\frac{J_2}{p}}\prod_{j=2}^d 2^{-\frac{J_j}{p}} \lesim 2^{-\frac{dk}{p}}.
\end{equation}
Recall 
\begin{equation}
2^{J_i}=2^{\mathfrak{k}_J}\prod_{j:\mathfrak{d}_j\leq i}(\mathfrak{s}_j)^{i-\mathfrak{d}_j}=2^{k}\prod_{j=1}^{J}(\mathfrak{s}_j)^{\mathfrak{d}_j}\prod_{j:\mathfrak{d}_j\leq i}(\mathfrak{s}_j)^{i-\mathfrak{d}_j}.
\end{equation}
Substituting back to the previous term, it suffices to show 
\begin{equation}
    \prod_{j=1}^J (\mathfrak{s}_j)^{p-\frac{(d-\mathfrak{d}_j+1)(d-\mathfrak{d}_j)}{2}+\frac{(m_j+2)(m_j-1)}{2}-\mathfrak{d}_j-\sum_{i:i\ge \mathfrak{d_j}}i-\sum_{i:i<\mathfrak{d}_j}\mathfrak{d}_j} \lesim 1,
\end{equation}
which is equivalent to showing
\begin{equation}
    p-\frac{(d-\mathfrak{d}_j+1)(d-\mathfrak{d}_j)}{2}+\frac{(m_j+2)(m_j-1)}{2}-\mathfrak{d}_j-\sum_{i:i\ge \mathfrak{d_j}}i-\sum_{i:i<\mathfrak{d}_j}\mathfrak{d}_j \ge 0.
\end{equation}
Rewrite the left hand side as
\begin{equation}
    p-\frac{(d-\mathfrak{d}_j+1)(d-\mathfrak{d}_j)}{2}+\frac{(m_j+2)(m_j-1)}{2}-\mathfrak{d}_j-\sum_{i:i\ge \mathfrak{d_j}}(i-\mathfrak{d}_j)+\mathfrak{d}_j-\sum_{i:i<\mathfrak{d}_j}\mathfrak{d}_j.
\end{equation}
This is equal to
\begin{equation}
    p-(d-\mathfrak{d}_j+1)(d-\mathfrak{d}_j)-d\mathfrak{d}_j+\frac{(m_j+2)(m_j-1)}{2}\ge p-(d-\mathfrak{d}_j+1)(d-\mathfrak{d}_j)-d\mathfrak{d}_j.
\end{equation}
Using the fact that $p\ge d(d+1)$, we are lead to showing that
\begin{equation}
    (d+1)\mathfrak{d}_j-\mathfrak{d}_j^2\ge 0,
\end{equation}
which holds since $\mathfrak{d}_j\leq d+1$.\\

In the end, we consider the case $J_d\le 0$. If $J_d\leq 0$, we turn to $J_i$, where $i$ is the largest number such that $J_i> 0$ and use Lemma \ref{221126lemma3_2} with degree $i$. We apply the same decoupling procedure above. Note that the above case is the worse case when we apply decoupling. To see this, assume without loss of generality that $J_d\leq 0$ and $J_{d-1}>0$. When we decouple to the scale $2^{\frac{J_{d-1}}{d-1}}$, we get a factor $2^{\frac{J_{d-1}}{d-1}(1-\frac{D_{d-1}}{p})}$, while the above case gives us $2^{-\frac{J_d}{p}}\cdot 2^{\frac{J_{d-1}}{d-1}(1-\frac{D_{d-1}}{p})}$, which is bigger since $J_d\leq 0$. This finishes the proof of Proposition \ref{230329prop2_2}.

\section{Variable coefficient maximal operators}

In this section, we will prove Theorem \ref{231102theorem1_3}. Recall that in the proof of Theorem \ref{230329theorem1_1}, the cinematic curvature condition \ref{Y_230330nondegenerate} appears in the step of proving local smoothing estimates (more precisely, in Claim \ref{221124claim2_4} about the non-degeneracy of a change of variables and in Claim \ref{221126claim3_1} which allowed us to use Fourier decoupling inequalities).  

In the rest of this section, we will focus on these two differences, and leave out the rest of the proof. 

We would like to remark that reduction to the normal form is not essential to the proof. In order to apply our reduction algorithm, we only need
\begin{align}
    \partial_{\theta^{d'}}\gamma(0)  \neq 0 
\end{align}
for some $d'\in [2,d+1]$.

If for all $d’\in [2,d+1]$, $\partial_{\theta^{d'}}\gamma(0)=0$, then we consider the first column of the matrix in (H3). There must be a smallest $d'\in [2,d+1]$ such that $(\partial_{\theta^{d'}}+\partial_{x\theta^{d'-1}})\gamma(0)\ne 0$. Since we can do a nonlinear transform 
\begin{align}
    (x, y)\mapsto (x, y+ c x^{d'}),
\end{align}
without loss of generality, we can assume that there is a smallest $d'\in [2,d+1]$, such that $\partial_{\theta^{d'}}\gamma(0)\neq 0$.

\subsection{Reduction algorithm}
We follow the same reduction algorithm in the translation-invariant case. We only need to check Claim \ref{221124claim2_4}. Denote our phase function of the multiplier by
\begin{align}
    \Phi(\bfx;\theta;\bfu;\bxi):=\theta\xi+\gamma(x,y;\bfu;\theta)\eta
\end{align}
where $\bfu=(u_1,u_2,\cdots,u_{d-1})$. We assume 
$\theta_{d_1-1}=\theta_{d_1-1}(\bfx;\bfu)$ is the unique solution to 
\begin{align}
    \frac{\partial^{d_1-1}}{\partial \theta^{d_1-1}}\Phi(\bfx;\theta;\bfu;\bxi)=0
\end{align}
We do a change of variables 
\begin{align}
    \theta\mapsto \theta+\theta_{d_1-1}(\bfx;\bfu)
\end{align}
Then the new phase function is 
\begin{align}
    &\Phi(\bfx;\theta+\theta_{d_1-1}(\bfx;\bfu);\bfu;\bxi)\\
    &=\Phi(\bfx;\theta_{d_1-1}(\bfx;\bfu);\bfu;\bxi)+(\eta\Psi_{d_1-1,1}(\bfx;\bfu)+\xi)\theta+\eta\sum_{i=2}^d \Psi_{d_1-1,i}(\bfx;\bfu)\frac{\theta^i}{i!} +\eta\theta^{d+1}P(\bfx;\theta;\bfu;\bxi)
\end{align}
where $P$ is a smooth function and 
\begin{align}
    \Psi_{d_1-1,i}(\bfx;\bfu):=(\frac{\partial^i}{\partial \theta^i}\Phi)(\bfx;\theta_{d_1-1}(\bfx;\bfu);\bfu;(\xi,1))
\end{align}
for $2\leq i\leq d$, and 
\begin{align}
    \Psi_{d_1-1,1}(\bfx;\bfu):=\frac{\partial}{\partial \theta}\Phi(\bfx;\theta_{d_1-1}(\bfx;\bfu);\bfu;(\xi,1))-\xi
\end{align}
We do the change of variable
\begin{align}\label{var}
    x-\theta_{d_1-1}(\bfx;\bfu)\mapsto x,\ y-\gamma(\bfx;\bfu;\theta_{d_1-1}(\bfx;\bfu))\mapsto y,\ \Psi_{d_1-1,d}(\bfx;\bfu)\mapsto v_d, \cdots, \Psi_{d_1-1,1}(\bfx;\bfu)\mapsto v_1
\end{align}
This is a change of variable from $(\bfx,\bfu)$ to $(\bfx,\bfv)$, where $\bfv=(v_1,\cdots,v_{d_1-2},v_{d_1},\cdots,v_d)$.
\begin{claim}\label{Claim 7.3}
    The Jacobian of the change of variables in \eqref{var} has absolute value comparable to 1.
\end{claim}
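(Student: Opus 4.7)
The plan is to mirror the strategy of Claim~\ref{221124claim2_4} from the translation-invariant case, but to account for the new dependence on $(x,y)$ that appears in (H3). My key assertion will be that, after appropriate row reduction, the Jacobian of the transformation (\ref{var}) equals $\pm\det(H3)/\partial_\theta^{d_1}\gamma$ evaluated at the critical point; since both $\det(H3)$ and $\partial_\theta^{d_1}\gamma$ are nonzero at the origin, the Jacobian is bounded away from $0$ in a sufficiently small neighborhood.

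First I would compute the Jacobian $J$ of the map $T:(x,y,\bfu)\mapsto(\tilde x,\tilde y,\bfv)$ in terms of partial derivatives of $\gamma$ and $\theta_{d_1-1}$. For $G_j(\bfx;\bfu):=\partial_\theta^j\gamma(\bfx;\bfu;\theta_{d_1-1}(\bfx;\bfu))$, the chain rule gives
\begin{equation}
\partial_\zeta G_j=\partial_\zeta\partial_\theta^j\gamma+\partial_\theta^{j+1}\gamma\cdot\partial_\zeta\theta_{d_1-1},
\end{equation}
evaluated at $\theta=\theta_{d_1-1}$, where $\zeta$ ranges over $\{x,y,u_1,\dots,u_{d-1}\}$. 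The implicit function theorem applied to the defining equation $\partial_\theta^{d_1-1}\gamma(\bfx;\bfu;\theta_{d_1-1})=0$ gives $\partial_\zeta\theta_{d_1-1}=-\partial_\zeta\partial_\theta^{d_1-1}\gamma/\partial_\theta^{d_1}\gamma$, so that the row of $J$ for $\tilde x$ takes the form $(\,1-\partial_x\theta_{d_1-1},\,-\partial_y\theta_{d_1-1},\,-\partial_{u_i}\theta_{d_1-1}\,)$ and admits a clean interpretation after scaling by $\partial_\theta^{d_1}\gamma$.

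Second, I would perform the row reduction: add $(-\partial_\theta\gamma)$ times the $\tilde x$-row to the $\tilde y$-row, and add $\partial_\theta^{j+1}\gamma$ times the $\tilde x$-row to each $v_j$-row (this does not change $\det J$). A direct computation shows that the $\tilde y$-row becomes
\begin{equation}
\bigl(-\partial_\theta\gamma-\partial_x\gamma,\ 1-\partial_y\gamma,\ -\partial_{u_1}\gamma,\dots,-\partial_{u_{d-1}}\gamma\bigr),
\end{equation}
which exactly matches the $k=0$ (i.e. $T_0=y-\gamma$) row of the (H3) matrix, while each $v_j$-row ($j\in\{1,\dots,d\}\setminus\{d_1-1\}$) becomes $(\partial_x\partial_\theta^j\gamma+\partial_\theta^{j+1}\gamma,\ \partial_y\partial_\theta^j\gamma,\ \partial_{u_i}\partial_\theta^j\gamma)$, which matches the $k=j$ row of (H3) up to a sign. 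Thus $d$ of the $d+1$ rows of (H3) are already reproduced (with $\partial_x$ and $\partial_\theta$ automatically combined into the single column $\partial_\theta+\partial_x$, which is the first column of (H3)).

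Third, the remaining $\tilde x$-row has entries $(1+\lambda_x,\lambda_y,\lambda_{u_i})$ with $\lambda_\zeta=\partial_\zeta\partial_\theta^{d_1-1}\gamma/\partial_\theta^{d_1}\gamma$. Scaling this row by $\partial_\theta^{d_1}\gamma$ yields $(\partial_\theta^{d_1}\gamma+\partial_x\partial_\theta^{d_1-1}\gamma,\ \partial_y\partial_\theta^{d_1-1}\gamma,\ \partial_{u_i}\partial_\theta^{d_1-1}\gamma)$, which is precisely the missing $k=d_1-1$ row of the (H3) matrix (up to a sign). Hence
\begin{equation}
\det(J)\big|_{\bfx=\bfu=0}=\frac{\pm 1}{\partial_\theta^{d_1}\gamma|_0}\cdot\det\bigl(\text{(H3) matrix}\bigr)\big|_{0}\neq 0,
\end{equation}
using assumption (H3) and the non-vanishing of $\partial_\theta^{d_1}\gamma$ at the origin (which defined $d_1$ in the first place). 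Since $\det(J)$ is smooth in $(\bfx,\bfu)$, by continuity we obtain $|\det(J)|\simeq 1$ on the support of the amplitude function, provided $\chi$ is supported in a sufficiently small neighborhood of the origin.

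The main obstacle will be bookkeeping: verifying the row operations and sign tracking, and confirming that the extra $\partial_x\theta_{d_1-1}$ contributions (absent in the translation-invariant case) precisely organize themselves into the combination $\partial_\theta+\partial_x$ that appears in the first column of (H3). Once this alignment is established, the rest of the argument is purely formal and parallels Claim~\ref{221124claim2_4}.
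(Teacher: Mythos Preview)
Your proposal is correct and follows essentially the same strategy as the paper's proof: compute the Jacobian of the map $(x,y,\bfu)\mapsto(\tilde x,\tilde y,\bfv)$, use the implicit function theorem to express $\partial_\zeta\theta_{d_1-1}$ in terms of derivatives of $\gamma$, and then perform row operations to reduce the determinant to $\pm(\partial_\theta^{d_1}\gamma)^{-1}$ times the (H3) determinant. Your presentation is in fact slightly more systematic than the paper's---you explicitly identify the row operations (adding $-\partial_\theta\gamma$ times the $\tilde x$-row to the $\tilde y$-row, and $\partial_\theta^{j+1}\gamma$ times the $\tilde x$-row to each $v_j$-row) and verify directly that the resulting rows reproduce all $d+1$ rows of the (H3) matrix up to sign, whereas the paper first uses $\partial_x\gamma(0)=\partial_y\gamma(0)=0$ to simplify and then manipulates---but the substance of the argument is the same.
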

\begin{proof}[Proof of Claim \eqref{Claim 7.3}]
Without loss of generality, we take $\eta=1$. We need to compute
\begin{align}
    \det\begin{bmatrix}
        1-\frac{\partial\theta_{d_1-1}}{\partial x} & -\frac{\partial\theta_{d_1-1}}{\partial y} & -\frac{\partial\theta_{d_1-1}}{\partial u_1} & \cdots & -\frac{\partial\theta_{d_1-1}}{\partial u_{d-1}}\\
         -\frac{\partial\gamma}{\partial x} & 1-\frac{\partial\gamma}{\partial y} & -\frac{\partial\gamma}{\partial u_1} & \cdots & -\frac{\partial\gamma}{\partial u_{d-1}}\\         
         \frac{\partial}{\partial x}(\frac{\partial}{\partial \theta}\gamma(\theta_{d_1-1})) &\frac{\partial}{\partial y}(\frac{\partial}{\partial \theta}\gamma(\theta_{d_1-1})) &\frac{\partial}{\partial u_1}(\frac{\partial}{\partial \theta}\gamma(\theta_{d_1-1})) & \cdots & \frac{\partial}{\partial u_{d-1}}(\frac{\partial}{\partial \theta}\gamma(\theta_{d_1-1}))  \\
         \cdots & \cdots & \cdots & \cdots & \cdots\\
\frac{\partial}{\partial x}(\frac{\partial^d}{\partial \theta^d}\gamma(\theta_{d_1-1})) &\frac{\partial}{\partial y}(\frac{\partial^d}{\partial \theta^d}\gamma(\theta_{d_1-1})) &\frac{\partial}{\partial u_1}(\frac{\partial^d}{\partial \theta^d}\gamma(\theta_{d_1-1})) & \cdots & \frac{\partial}{\partial u_{d-1}}(\frac{\partial^d}{\partial \theta^d}\gamma(\theta_{d_1-1}))          
         \end{bmatrix}    
\end{align}
Since 
\begin{align}
    \frac{\partial \gamma}{\partial x}(0)= \frac{\partial \gamma}{\partial y}(0)=0    
\end{align}
the determinant is comparable to 
\begin{align}\label{7.31}
    \det \begin{bmatrix}
    0 & -1 & \frac{\partial \gamma}{\partial u_1} & \cdots  & \frac{\partial \gamma}{\partial u_{d-1}}\\
    (1-\frac{\partial \theta_{d_1-1}}{\partial x}) & -\frac{\partial \theta_{d_1-1}}{\partial y} & -\frac{\partial\theta_{d_1-1}}{\partial u_1} & \cdots & -\frac{\partial\theta_{d_1-1}}{\partial u_{d-1}}\\
    \frac{\partial}{\partial x}(\frac{\partial}{\partial \theta}\gamma(\theta_{d_1-1})) & \frac{\partial}{\partial y}(\frac{\partial}{\partial \theta}\gamma(\theta_{d_1-1})) &   \frac{\partial}{\partial u_1}(\frac{\partial}{\partial \theta}\gamma(\theta_{d_1-1})) & \cdots & \frac{\partial}{\partial u_{d-1}}(\frac{\partial}{\partial \theta}\gamma(\theta_{d_1-1}))  \\    
     \cdots & \cdots & \cdots & \cdots\\
\frac{\partial}{\partial x}(\frac{\partial^d}{\partial \theta^d}\gamma(\theta_{d_1-1})) & \frac{\partial}{\partial y}(\frac{\partial^d}{\partial \theta^d}\gamma(\theta_{d_1-1})) & \frac{\partial}{\partial u_1}(\frac{\partial^d}{\partial \theta^d}\gamma(\theta_{d_1-1})) & \cdots & \frac{\partial}{\partial u_{d-1}}(\frac{\partial^d}{\partial \theta^d}\gamma(\theta_{d_1-1}))  
        \end{bmatrix}
\end{align}
Taking partial derivatives on both side of 
\begin{align}
    \frac{\partial^{d_1-1}}{\partial \theta^{d_1-1}}\Phi(\bfx;\theta_{d_1-1}(\bfx;\bfu);\bfu;\bxi)=0
\end{align}
we obtain
\begin{align}
    \frac{\partial \theta_{d_1-1}}{\partial x}=-\frac{\partial_{x\theta^{d_1-1}}\gamma}{\partial_{\theta^{d_1}}\gamma}\\
    \frac{\partial \theta_{d_1-1}}{\partial y}=-\frac{\partial_{y\theta^{d_1-1}}\gamma}{\partial_{\theta^{d_1}}\gamma}\\    \frac{\partial \theta_{d_1-1}}{\partial u_i}=-\frac{\partial_{u_i\theta^{d_1-1}}\gamma}{\partial_{\theta^{d_1}}\gamma}
\end{align}
Substituting into \eqref{7.31}, we have
\begin{align}
   \det \begin{bmatrix}
   0 & -1  & \cdots  & \frac{\partial \gamma}{\partial u_{d-1}}\\
   (1-\frac{\partial \theta_{d_1-1}}{\partial x}) & -\frac{\partial\theta_{d_1-1}}{\partial y} &  \cdots & -\frac{\partial\theta_{d_1-1}}{\partial u_{d-1}}\\
 \partial_{x\theta}\gamma+\partial_{\theta^2}\gamma \frac{\partial\theta_{d_1-1}}{\partial x} &     \partial_{y\theta}\gamma+\partial_{\theta^2}\gamma \frac{\partial\theta_{d_1-1}}{\partial y} &  
  \cdots & \partial_{u_{d-1}\theta}\gamma+\partial_{\theta^2}\gamma \frac{\partial\theta_{d_1-1}}{\partial u_{d-1}}  \\    
     \cdots & \cdots & \cdots & \cdots\\
\partial_{x\theta^d}\gamma+\partial_{\theta^{d+1}}\gamma \frac{\partial\theta_{d_1-1}}{\partial x} &     \partial_{y\theta^d}\gamma+\partial_{\theta^{d+1}}\gamma \frac{\partial\theta_{d_1-1}}{\partial y} & \cdots & \partial_{u_{d-1}\theta^d}\gamma+\partial_{\theta^{d+1}}\gamma \frac{\partial\theta_{d_1-1}}{\partial u_{d-1}} 
        \end{bmatrix}   
\end{align}
\begin{align}
   =(\partial_{\theta^{d_1}}\gamma) \det \begin{bmatrix}   
   0 & -1 & \cdots  & \frac{\partial \gamma}{\partial u_{d-1}}\\
   \partial_{\theta^{d_1}}\gamma+\partial_{x\theta^{d_1-1}}\gamma & \partial_{y\theta^{d_1-1}}\gamma & \cdots & \partial_{u_{d-1}\theta^{d_1-1}}\gamma\\  
\partial_{x\theta}\gamma+\partial_{\theta^2}\gamma & \partial_{y\theta}\gamma & \cdots & \partial_{u_{d-1}\theta}\gamma\\
\cdots & \cdots & \cdots & \cdots \\
\partial_{x\theta^d}\gamma+\partial_{\theta^{d+1}}\gamma & \partial_{y\theta^d}\gamma & \cdots & \partial_{u_{d-1}\theta^d}\gamma   \end{bmatrix}
\end{align}
The first term is comparable to 1, the second term is exactly our curvature condition (H3) up to a sign. This finishes the proof.
\end{proof}

\subsection{Decoupling condition}
In this section, we are going to check the decoupling condition which we need in the proof, i.e. the curvature condition for the variable coefficient decoupling inequality \eqref{221126claim3_1}.

Let us look at the curvature condition for the decoupling inequality. In the variable coefficient case, our phase function is
\begin{equation}
    x\xi+y\eta-\Phi(\bfx;\theta_1(\bfx;\bfv;\xi');\bfv;\bxi)
\end{equation}
where $\theta_1(\bfx;\bfv;\xi')$ is the unique solution of 
\begin{equation}
    \xi'+\partial_{\theta}\gamma(\bfx;\theta;\bfv)=0.
\end{equation}
Denote this phase function by $P$. The curvature condition we need here is
\begin{equation}\label{250127e8_22}
    \det\begin{bmatrix}
        \partial_y P & \partial_{y}\partial_{\xi}P & \dots & \partial_{y} \partial_{\xi}^{d}P\\        
        \partial_x P & \partial_{x}\partial_{\xi}P & \dots & \partial_{x} \partial_{\xi}^{d}P\\
    \partial_{v_1}P & \partial_{v_1}\partial_{\xi}P & \dots & \partial_{v_1} \partial_{\xi}^{d}P\\
     \vdots & \vdots & \ddots & \vdots\\
    
    \partial_{v_{d-1}}P & \partial_{v_{d-1}}\partial_{\xi}P  & \dots & \partial_{v_{d-1}} \partial_{\xi}^{d}P
    \end{bmatrix}(\bfv; \bxi)\neq 0.  
\end{equation}
If this holds, then the phase function can be approximated (in the spirit of Pramanik and Seeger \cite{PS07}) by  $(y,x,\bfv)\cdot(1,\xi',\xi'^2,\dots,\xi'^d)\eta$, and then we can apply  decoupling inequalities in Lemma \ref{221126lemma3_2}.

Without loss of generality, we take $\eta=1$. By similar computation before, one can get
\begin{equation}
    \partial_{\xi}P=x-\theta_1,
\end{equation}
thus the determinant in the curvature condition equals to (up to a sign)
\begin{equation}
    \det\begin{bmatrix}
       \gamma_y-1 & \frac{\partial \theta_1}{\partial y} & \frac{\partial}{\partial \xi}(\frac{\partial \theta_1}{\partial y}) & \cdots & \frac{\partial^{d-1}}{\partial \xi^{d-1}}(\frac{\partial \theta_1}{\partial y}) \\
        \gamma_x-\xi &  \frac{\partial \theta_1}{\partial x}-1 & \frac{\partial}{\partial \xi}(\frac{\partial \theta_1}{\partial x}) & \dots & \frac{\partial^{d-1}}{\partial \xi^{d-1}}(\frac{\partial \theta_1}{\partial x})\\

    \gamma_{v_1} & \frac{\partial \theta_1}{\partial v_1} & \frac{\partial}{\partial \xi}(\frac{\partial \theta_1}{\partial v_1}) & \dots & \frac{\partial^{d-1}}{\partial \xi^{d-1}}(\frac{\partial \theta_1}{\partial v_1})\\      
    
    \vdots & \vdots & \ddots & \ddots & \vdots\\    

     \gamma_{v_{d-1}} & \frac{\partial \theta_1}{\partial v_{d-1}} & \frac{\partial}{\partial \xi}(\frac{\partial \theta_1}{\partial v_{d-1}}) & \dots & \frac{\partial^{d-1}}{\partial \xi^{d-1}}(\frac{\partial \theta_1}{\partial v_{d-1}})\\    \end{bmatrix}
\end{equation}
Since 
\begin{align}
    \frac{\partial \theta_1}{\partial v_i}=\partial_{v_i\theta}\gamma\cdot \frac{\partial \theta_1}{\partial \xi}, \\
    \frac{\partial \theta_1}{\partial x}=\partial_{x\theta}\gamma \cdot \frac{\partial \theta_1}{\partial \xi},\\
    \frac{\partial \theta_1}{\partial y}=\partial_{y\theta}\gamma \cdot \frac{\partial \theta_1}{\partial \xi},
\end{align}
and
\begin{align}
    \frac{\partial \theta_1}{\partial \xi}=-\frac{1}{\gamma_{\theta\theta}},
\end{align}
the determinant is equal to
\begin{align}
   (\partial_{\xi}\theta_1)\det\begin{bmatrix}
       \gamma_y-1 & \gamma_{y\theta} & \frac{\partial}{\partial \xi}(\frac{\partial \theta_1}{\partial y}) & \cdots & \frac{\partial^{d-1}}{\partial \xi^{d-1}}(\frac{\partial \theta_1}{\partial y}) \\
        \gamma_x+\gamma_{\theta} &  \gamma_{x\theta}+\gamma_{\theta\theta} & \frac{\partial}{\partial \xi}(\frac{\partial \theta_1}{\partial x}) & \dots & \frac{\partial^{d-1}}{\partial \xi^{d-1}}(\frac{\partial \theta_1}{\partial x})\\
    \gamma_{v_1} & \gamma_{v_1\theta} & \frac{\partial}{\partial \xi}(\frac{\partial \theta_1}{\partial v_1}) & \dots & \frac{\partial^{d-1}}{\partial \xi^{d-1}}(\frac{\partial \theta_1}{\partial v_1})\\    
    \vdots & \vdots & \ddots & \ddots & \vdots\\   
     \gamma_{v_{d-1}} & \gamma_{v_{d-1}\theta} & \frac{\partial}{\partial \xi}(\frac{\partial \theta_1}{\partial v_{d-1}}) & \dots & \frac{\partial^{d-1}}{\partial \xi^{d-1}}(\frac{\partial \theta_1}{\partial v_{d-1}})\\    \end{bmatrix}
\end{align}
Since
\begin{align}
    &\frac{\partial}{\partial \xi}(\frac{\partial \theta_1}{\partial v_i}) =  \frac{\partial}{\partial \xi}(\gamma_{v_i\theta}\frac{\partial \theta_1}{\partial \xi} )  = \gamma_{v_i\theta \theta}(\frac{\partial \theta_1}{\partial \xi})^2+\gamma_{v_i\theta} \gamma_{\theta\theta\theta} (\frac{\partial \theta_1}{\partial \xi})^3 \\
    &\frac{\partial}{\partial \xi}(\frac{\partial \theta_1}{\partial y}) =   \gamma_{y\theta \theta}(\frac{\partial \theta_1}{\partial \xi})^2+\gamma_{y\theta} \gamma_{\theta\theta\theta} (\frac{\partial \theta_1}{\partial \xi})^3 \\
    &\frac{\partial}{\partial \xi}(\frac{\partial \theta_1}{\partial x}) =   \gamma_{x\theta \theta}(\frac{\partial \theta_1}{\partial \xi})^2+\gamma_{x\theta} \gamma_{\theta\theta\theta} (\frac{\partial \theta_1}{\partial \xi})^3,
\end{align}
using elementary row operations, this is equal to
\begin{equation}
    (\partial_{\xi}\theta_1)^3\det\begin{bmatrix}
       \gamma_y-1 & \gamma_{y\theta} & \gamma_{y\theta\theta} & \cdots & \frac{\partial^{d-1}}{\partial \xi^{d-1}}(\frac{\partial \theta_1}{\partial y}) \\
        \gamma_x+\gamma_{\theta} &  \gamma_{x\theta}+\gamma_{\theta\theta} &\gamma_{x\theta\theta}+\gamma_{\theta\theta\theta} & \dots & \frac{\partial^{d-1}}{\partial \xi^{d-1}}(\frac{\partial \theta_1}{\partial x})\\
    \gamma_{v_1} & \gamma_{v_1\theta} & \gamma_{v_1\theta\theta} & \dots & \frac{\partial^{d-1}}{\partial \xi^{d-1}}(\frac{\partial \theta_1}{\partial v_1})\\    
    \vdots & \vdots & \ddots & \ddots & \vdots\\   
     \gamma_{v_{d-1}} & \gamma_{v_{d-1}\theta} & \gamma_{v_{d-1}\theta\theta} & \dots & \frac{\partial^{d-1}}{\partial \xi^{d-1}}(\frac{\partial \theta_1}{\partial v_{d-1}})\\    \end{bmatrix}
\end{equation}
By applying the same trick, the determinant in  \eqref{250127e8_22} is equal to 
\begin{equation}
    (\partial_{\xi}\theta_1)^{\frac{d(d+1)}{2}}\det\begin{bmatrix}
       \gamma_y-1 & \gamma_{y\theta} & \gamma_{y\theta\theta} & \cdots & \gamma_{y\theta^d} \\
        \gamma_x+\gamma_{\theta} &  \gamma_{x\theta}+\gamma_{\theta\theta} &\gamma_{x\theta\theta}+\gamma_{\theta\theta\theta} & \dots & \gamma_{x\theta^d}+\gamma_{\theta^{d+1}}  \\
    \gamma_{v_1} & \gamma_{v_1\theta} & \gamma_{v_1\theta\theta} & \dots & \gamma_{v_1\theta^d} \\    
    \vdots & \vdots & \ddots & \ddots & \vdots\\   
     \gamma_{v_{d-1}} & \gamma_{v_{d-1}\theta} & \gamma_{v_{d-1}\theta\theta} & \dots & \gamma_{v_{d-1}\theta^{d}} \\    \end{bmatrix}
\end{equation}
The first factor is always comparable to 1, and thus the curvature condition \eqref{250127e8_22} follows from our assumption (H3).

\normalem

\noindent 
Mingfeng Chen\\
Department of Mathematics, University of Wisconsin-Madison, Madison, WI-53706, USA \\
Email address: mchen454@math.wisc.edu\\

\noindent Shaoming Guo\\
Chern Institute of Mathematics and LPMC, Nankai University, Tianjin, China\\
and \\
Department of Mathematics, University of Wisconsin-Madison, Madison, WI-53706, USA \\
Email address: shaomingguo2018@gmail.com\\

\noindent Tongou Yang\\
Department of Mathematics, UCLA, Los Angeles, CA, USA\\
and \\
Department of Mathematics, University of Wisconsin-Madison, Madison, WI-53706, USA \\
Email address: tongouyang@math.ucla.edu


\begin{thebibliography}{}



\bibitem[BHS20]{BHS20} Beltran D., Hickman J. and Sogge C. \emph{Variable coefficient Wolff-type inequalities and sharp local smoothing estimates for wave equations on manifolds.} Analysis $\&$ PDE, 2020, 13(2): 403--433.



\bibitem[BGHS21]{BGHS21} Beltran, D., Guo, S., Hickman, J. and Seeger, A. \emph{Sobolev improving for averages over curves in $R^4$.} Advances in Mathematics 393 (2021): 108089.



\bibitem[Bou86]{Bou86} Bourgain, J. \emph{Averages in the plane over convex curves and maximal operators.} J. Analyse Math. 47 (1986), 69--85.


\bibitem[BDG16]{BDG16} Bourgain, J., Demeter, C. and Guth, L. \emph{Proof of the main conjecture in Vinogradov's mean value theorem for degrees higher than three.} Annals of Mathematics (2016): 633--682.



\bibitem[Erd03]{Erd03} Erdogan, B. \emph{Mapping properties of the elliptic
maximal function.} Rev. Mat. Iberoamericana 19 (2003), 221–234.



\bibitem[GGW22]{GGW22} Gan, S., Guo, S. and Wang, H. \emph{A restricted projection problem for fractal sets in $\R^n$. } Camb. J. Math., Volume 12 (2024) Number 3, pp. 535-561.






\bibitem[KLO23]{KLO23} Ko, H., Lee, S. and Oh, S. \emph{Sharp smoothing properties of averages over curves.} Forum of Mathematics, Pi. Vol. 11. Cambridge University Press, 2023.


\bibitem[LLO23]{LLO23} Lee, J., Lee, S. and Oh, S. \emph{The elliptic maximal function.} arXiv:2305.16221


\bibitem[MR98]{MR98} Marletta, G. and Ricci, F. \emph{Two-parameter maximal functions associated with homogeneous surfaces in  $\R^n$.} Studia Mathematica 130.1 (1998): 53--65.



\bibitem[Mar87]{Mar87} Marstrand, J. \emph{Packing circles  in the plane.} Proc. London Math. Soc. 55(1987), 37-58.





\bibitem[MSS92]{MSS92} Mockenhaupt, G., Seeger, A. and Sogge, C. \emph{Wave front sets, local smoothing and Bourgain's circular maximal theorem.} 
Ann. of Math. (2) 136 (1992), no. 1, 207--218.




\bibitem[MSS93]{MSS93} Mockenhaupt, G., Seeger, A. and Sogge, C. \emph{Local smoothing of Fourier integral operators and Carleson-Sjölin estimates.} Journal of the American Mathematical Society 6, no. 1 (1993): 65--130.



\bibitem[PS07]{PS07} Pramanik, M. and Seeger, A. \emph{$L^p$ regularity of averages over curves and bounds for associated maximal operators. } Amer. J. Math. 129.1 (2007): 61--103.


\bibitem[PYZ22]{PYZ22} Pramanik, M., Yang, T. and Zahl, J. \emph{A Furstenberg-type problem for circles, and a Kaufman-type restricted projection theorem in $\R^3$.} arXiv:2207.02259, to appear in Amer. J. Math. 


\bibitem[Sog91]{Sog91} Sogge, C. \emph{Propagation of singularities and maximal functions in the plane.} Inventiones mathematicae, volume 104, pages 349--376 (1991).


\bibitem[Zah23]{Zah23} Zahl J. \emph{On Maximal Functions Associated to Families of Curves in the Plane.} arXiv preprint arXiv:2307.05894, 2023.


\end{thebibliography}
\end{document}